\documentclass[12pt]{article}
\usepackage{color,amsmath,amsfonts,amssymb,epsfig,latexsym}
\usepackage{graphicx}
\usepackage[latin1]{inputenc}
\usepackage{amssymb}
\usepackage{bm}
\usepackage{color}

\usepackage{pgf,tikz}
\usepackage{mathrsfs}
\usetikzlibrary{arrows}
\pagestyle{empty}
\usepackage{leftidx}

\usepackage{longtable}

\usepackage{blindtext, subfig}
\usepackage{dblfloatfix} 

\setcounter{MaxMatrixCols}{10}

\pagestyle{plain}

\newcommand{\dt}{{\rm d} t }

\newcommand{\R}{\mbox{\F R}}

\renewcommand{\d}{{\rm d} }

\def\R{{\mathbb R}}
\def\N{{\mathbb N}}
\def\T{{\mathbb T}}
\def\Q{{\mathbb Q}}
\def\P{{\mathbb P}}
\def\J{{\mathbb J}}
\def\I{{\mathbb I}}
\def\D{{\mathbb D}}
\def\divg{\mathop{\rm div}\nolimits}

\def\bpsi{\boldsymbol\psi}

\def\bphi{\boldsymbol\varphi}

\def\bn{{\bf n}}
\def\bv{{\bf v}}

\def\bu{{\bf u}}
\def\bU{{\bf U}}
\def\bq{{\bf q}}
\def\bx{{\bf x}}
\def\by{{\bf y}}
\def\bX{{\bf X}}
\def\bY{{\bf Y}}
\def\ba{{\bf a}}
\def\bA{{\bf A}}
\def\bB{{\bf B}}

\def\by{{\bf y}}
\def\bn{{\bf n}}
\def\bN{{\bf N}}

\def\bpsi{\boldsymbol\psi}

\def\bomega{\boldsymbol\omega}
\def\bOmega{\boldsymbol\Omega}

\def\bgamma{\boldsymbol\gamma}

\def\bphi{\boldsymbol\varphi}

\def\bPsi{\boldsymbol\Psi}
\newtheorem{teo}{Theorem}[section]
\newtheorem{definition}{Definition}[section]
\newenvironment{proof}[1][Proof]{\medskip\noindent\textit{#1. }\upshape}{\medskip}
\newtheorem{lemma}{Lemma}[section]
\newtheorem{corollary}{Corollary}[section]
\newtheorem{remark}{Remark}[section]
\newtheorem{proposition}{Proposition}[section]
\numberwithin{equation}{section}
\setlength{\parskip}{1mm}
\setlength{\textwidth}{7.0in} \setlength{\oddsidemargin}{-0.1 in}
\setlength{\evensidemargin}{0.0 in} \setlength{\topmargin}{-0.5 in}
\setlength{\textheight}{8.5 in}

\newcommand\quotient[2]{#1\,/\,#2}

\begin{document}
\date{}
\title{On the regularity of weak solutions to the fluid-rigid body interaction problem}
\author{{\large Boris Muha\textsuperscript{1}\thanks{The research of B.M. leading to these results has been supported by Croatian Science Foundation under the project IP-2018-01-3706}, \v S\'arka Ne\v casov\'a\textsuperscript{2}\thanks{The research of \v S.N. leading to these results has received funding from
	the Czech Sciences Foundation (GA\v CR),
			22-01591S.  Moreover,  \v S. N.   has been supported by  Praemium Academiae of \v S. Ne\v casov\' a. CAS is supported by RVO:67985840.
			}, Ana Rado\v sevi\'c\textsuperscript{2,3}\thanks{The research of A.R. leading to these results has been supported by Croatian Science Foundation under the project IP-2019-04-1140. Moreover, The research of A.R. leading to these results has received funding from
			the Czech Sciences Foundation (GA\v CR) 22-01591S,
			and  by  Praemium Academiae of \v S. Ne\v casov\' a.  }}\\
	{\small \textsuperscript{1} Department of Mathematics}\\
	{\small Faculty of Science}\\
	{\small University of Zagreb, Croatia}\\
	{\small borism@math.hr}\\
	{\small $^2$ Institute of Mathematics, }\\
	{\small \v Zitn\'a 25, 115 67 Praha 1, Czech Republic }\\
	{\small matus@math.cas.cz}\\
	{\small \textsuperscript{3} Department of Mathematics}\\
	{\small Faculty of Economics and Business}\\
	{\small University of Zagreb, Croatia}\\
	{\small aradosevic@efzg.hr}
}
\maketitle

We study a 3D fluid-rigid body interaction problem. The fluid flow is governed by 3D incompressible Navier-Stokes equations, while the motion of the rigid body is described by a system of ordinary differential equations describing conservation of linear and angular momentum. Our aim is to prove that any weak solution satisfying certain regularity conditions is smooth. This is a generalization of the classical result for the $3D$ incompressible Navier-Stokes equations, which says that a weak solution that additionally satisfy Prodi - Serrin $L^r-L^s$ condition is smooth. We show that in the case of fluid - rigid body the Prodi - Serrin conditions imply $W^{2,p}$ and $W^{1,p}$ regularity for the fluid velocity and fluid pressure, respectively. Moreover, we show that solutions are $C^{\infty}$ if additionally we assume that the rigid body acceleration is bounded almost anywhere in time variable.

\section{Introduction}

\subsection{Fluid - rigid body system}

Let $\Omega\subset\R^3$ be a smooth bounded domain, and $S_0\subset\Omega$ be smooth such that $d(\partial\Omega,\overline{S_0})>0$. $S_0$ represents part of the domain occupied by the rigid body at the initial state. $\Omega_F=\Omega\setminus\overline{S_0}$ is the fluid domain at the initial state which we will use as the reference domain. The unknowns of the system are fluid velocity $\bu:[0,T]\times\Omega_F(t)\to\R^3$, fluid pressure $p:[0,T]\times\Omega_F(t)\to\R$, position of the center of mass of the rigid body $\bq:[0,T]\to\R^3$ and angular velocity of the rigid body $\bomega:[0,T]\to\R^3$. Here we used the following abuse of notation which is standard in analysis of moving boundary problems:
\begin{equation}\label{FluidDomainNonCylindircal}
(0,T)\times\Omega_F(t)=\bigcup_{t\in (0,T)}\{t\}\times\Omega_F(t),
\end{equation}
where $\Omega_F(t)=\Omega\setminus\overline{S(t)}$ is the fluid domain at time $t$, and $S(t)$ is a part of the domain occupied by the rigid body at time $t$ and is defined by $\bq$ and $\bomega$ in the following way. Let $\P$ be a skew-symmetric matrix such that $\P(t)\bx=\bomega(t)\times \bx,\; \bx\in \R^{3}.$ Then rotation of the rigid body $\Q:[0,T]\to SO(3)$ is defined by relation 
\begin{equation}\label{compatibility_Q}
    \frac{d\Q}{dt}\Q^{T}=\P.
\end{equation}
The domain $S(t)$ is defined by an orientation preserving isometry
\begin{equation}	\label{is}
{\bB}(t,\by)=\bq(t)+\Q(t)(\by-\bq(0)),\qquad \by\in S_{0},\; t\in [0,T],
\end{equation}
as the set
\begin{equation}	\label{set}
S(t)=\{\bx\in \R^{3}:\ \,\bx=\bB(t,\by),\quad \by\in S_{0}\}
=\bB(t,S_{0}),\qquad t\in [0,T].
\end{equation}

The Eulerian velocity of the rigid body is given by:
\begin{equation}\label{RigidVelocity}
\bu_S(t,\bx):=
\partial_{t}\bB(t,\bB^{-1}(t,\bx))=\ba(t)+\P(t)(\bx-\bq(t))\qquad \text{for all}\quad
\bx\in S(t),
\end{equation}
where $\ba=\frac{\d}{\dt}\bq$ is the translation velocity of the rigid body.

The equations modelling dynamics of the fluid - rigid body system read as follows:
\begin{equation}\label{FSINoslip}
\begin{array}{l}
\left.
\begin{array}{l}
\partial_{t}\bu+(\bu\cdot \nabla )\bu
= \divg \left( {\T}(\bu,p)\right) , \\
\divg\bu = 0
\end{array}
\right\} \;\mathrm{in}\;(0,T)\times\Omega_{F}(t),
\\
\left.
\begin{array}{l}
\frac{\d^{2}}{\dt^{2}}\bq = -\int_{\partial S(t)}{\T}(\bu,p) \bn\,\d\bgamma(\bx), \\
\frac{\d}{\dt}(\J\bomega) = -\int_{\partial S(t)}(\bx-\bq(t))\times {\T}(\bu,p)\bn\,\d\bgamma(\bx)
\end{array}
\right\} \;\mathrm{in}\;(0,T),
\\
\bu =\frac{\d}{\dt}\bq+\bomega\times (\bx-\bq),\quad \mathrm{on}\;(0,T)\times\partial S(t),
\\
\bu = 0 \quad \mathrm{on}\, \partial \Omega,
\\
\bu(0,.)=\bu_{0}\qquad \mathrm{in}\;\Omega,\quad \bq(0)=\bq_{0},\quad \frac{\d}{\dt}\bq(0)=\ba_{0},\quad
\bomega(0)=\bomega_{0}.
\end{array}
\end{equation}
Here $\T(\bu,p) = -p\I + 2\D\bu$ is the fluid Cauchy stress tensor, where $\D\bu = \frac{1}{2}\left(\nabla\bu+\nabla\bu^T\right)$ is symmetric part of the gradient, and $\J$ is the inertial tensor defined as follows:
\begin{equation*}
\J=\int_{S(t)}(|\bx-\bq(t)|^{2}\I-(\bx-\bq(t))\otimes (\bx-\bq(t)))\,\d\bx . 
\end{equation*}
Notice that for simplicity we normalized all physical constants since their concrete values do not influence our analysis. 
\begin{remark} [About the notation]
	Throughout the paper we will denote by $\bu$ both the fluid velocity defined on $\Omega_F$ and the global velocity defined on $\Omega$. The global velocity is obtained by extending the fluid velocity by setting $\bu=\bu_S$ on $S(t)$. To avoid confusion we will always write the domain of definition.
\end{remark}

\subsection{Statement of the results}

The goal of the paper is to study the regularity of weak solution to fluid - rigid body problem \eqref{FSINoslip}. Definition and existence of finite energy weak solutions (i.e. of Leray-Hopf type) are well-known (see e.g. \cite{ConcaRigid00}). Here for the convenience of reader, we recall the definition of weak solution:
First we define a function space
\begin{equation}\label{FluidFS}
V(t)=\{\bv\in H^1_0(\Omega):\divg\bv=0,\; \D\bv=0\;{\rm in}\; S(t)\},
\end{equation}
and a weak solution is given by the following definition

\begin{definition}[\cite{BorisSarkaAna2020weak}]\label{definition}
	The couple $\left(\bu, \bB\right)$
	is a weak solution to the system \eqref{FSINoslip} if the following conditions
	are satisfied:
	\begin{enumerate}
	\item The function $\bB(t,\cdot ):\R^{3}\rightarrow \R^{3}$ \ is an orientation preserving isometry given by the formula \eqref{is}, which defines a time-dependent set $S(t)=\bB(t,S)$.
	The isometry $\bB$ is compatible with $\bu=\bu_S$ on $S(t)$ in the following sense: the rigid part of velocity $\bu$, denoted by $\bu_S$, satisfies condition \eqref{RigidVelocity}, and 
	$\bq,\; \Q$ are absolutely continuous on $\left[ 0,T\right]$ and satisfy \eqref{compatibility_Q} with $\ba=\frac{\d}{\dt}\bq$.
	\item The function $\bu\in L^{2}(0,T;V(t))\cap L^{\infty
	}(0,T;L^2(\Omega ))$ satisfies the integral equality
	\begin{multline} \label{weak}
	\int_{0}^{T}\int_{\Omega}\{
	\bu \cdot \partial_{t}\bphi
	+ (\bu\otimes \bu) :\D\bphi
	- 2\,\D\bu:\D\bphi\,\}\, \d\bx\d t
	- \int_{\Omega }\bu(T)\bphi(T)\,\d\bx
	= - \int_{\Omega }\bu_{0}\bphi(0)\,\d\bx,
	\end{multline}
	which holds for any test function $\bphi\in H^1(0,T;V(t))$.
	\item The  energy inequality
	\begin{equation*}	\label{EnergyInequality}
	\frac{1}{2}\|\bu(t)\|_{L^2(\Omega)}^2
	+ 2\int_{0}^{t}
	\int_{\Omega_{F}(\tau)}\,|\D \bu|^{2}
	\,\d\bx\,\d\tau
	\leq
	\frac{1}{2}\|\bu_0\|_{L^2(\Omega)}^2.
	\end{equation*}
	holds for almost every $t\in(0,T)$.
	\end{enumerate}
\end{definition}

Now we state the main result of the paper.

\begin{teo}\label{mainResult}	
Let $(\bu,\bB)$ be a weak solution to the system \eqref{FSINoslip}. Assume that $d(S(t),\partial\Omega)>\delta$, for some constant $\delta>0$. 
If
$\frac{\d}{\dt}\ba,\frac{\d}{\dt}\bomega\in L^{\infty}(0,T)$,
and $\bu$ satisfies Prodi-Serrin condition
\begin{equation}\label{LrLs}
\bu\in L^r(0,T;L^s(\Omega_F(t)))\quad
\text{ for some } s,r \text{ such that }\quad
\frac{3}{s}+\frac{2}{r}=1,\, s\in(3,+\infty)
\end{equation}
then
\begin{equation*}
\bu\in C^{\infty}((0,T]\times\Omega_F(t)),\;\ba,\bomega\in C^{\infty}((0,T]).
\end{equation*}
\end{teo}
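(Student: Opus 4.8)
The plan is to transport \eqref{FSINoslip} to the fixed reference domain $\Omega_F$, turning it into a Navier--Stokes system with lower-order perturbations coupled to the rigid ODEs, and then to bootstrap the two sets of unknowns jointly. Because $d(S(t),\partial\Omega)>\delta$, one can fix a cut-off equal to $1$ near $\overline{S_0}$ and supported away from $\partial\Omega$, and build a diffeomorphism $\bX(t,\cdot):\Omega_F\to\Omega_F(t)$ that equals the rigid motion $\bB(t,\cdot)$ near the body and the identity near $\partial\Omega$. Under the hypothesis $\tfrac{d}{dt}\ba,\tfrac{d}{dt}\bomega\in L^\infty(0,T)$ we have $\bq,\Q\in W^{2,\infty}(0,T)$, so $\nabla\bX$, its inverse and $\partial_t\bX$ are bounded, smooth in space and Lipschitz in time. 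Writing $\bv,q$ for the pull-backs of $\bu,p$, the system becomes
\[
\partial_t\bv-\divg\T(\bv,q)+(\bv\cdot\nabla)\bv=\mathcal F(\bv,q),
\qquad \divg(\mathsf{M}\,\bv)=0,
\]
where $\mathcal F$ gathers the first-order transport and Coriolis-type terms produced by the moving frame (with coefficients controlled by $\|(\ba,\bomega)\|_{W^{1,\infty}}$), $\mathsf{M}$ is the cofactor matrix of $\bX$, and the rigid equations retain their ODE structure with right-hand sides the force and torque $\int_{\partial S(t)}\T(\bu,p)\bn\,\d\bgamma$ and $\int_{\partial S(t)}(\bx-\bq)\times\T(\bu,p)\bn\,\d\bgamma$.

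Starting from the finite-energy bounds of Definition~\ref{definition}, the first goal is to promote the weak solution to a strong one. Testing the transformed momentum equation with the fluid--structure Stokes operator $A\bv$ (so that $q$ and the rigid constraint drop out and the body inertia is absorbed) gives, schematically, $\tfrac{d}{dt}\|\nabla\bv\|_{L^2}^2+\|A\bv\|_{L^2}^2\lesssim |\langle(\bv\cdot\nabla)\bv,\,A\bv\rangle|+\text{l.o.t.}$ The convective term is treated exactly as in the classical Prodi--Serrin argument: Hölder with exponents tuned to $s$ together with Gagliardo--Nirenberg yields $|\langle(\bv\cdot\nabla)\bv,\,A\bv\rangle|\le\tfrac12\|A\bv\|_{L^2}^2+C\,\|\bv\|_{L^s}^{r}\|\nabla\bv\|_{L^2}^2$ with $r=\tfrac{2s}{s-3}$, precisely the exponent in \eqref{LrLs}. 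Since \eqref{LrLs} makes $t\mapsto\|\bv\|_{L^s}^{r}$ integrable and the terms in $\mathcal F$ are absorbed using $\ba,\bomega\in W^{1,\infty}$, Grönwall gives $\bv\in L^\infty(0,T;H^1)\cap L^2(0,T;H^2)$; recovering $q$ by de Rham/Bogovskii then yields $q\in L^2(0,T;H^1)$, and the now-bounded traces of $\T(\bu,p)$ make the right-hand sides of the rigid ODEs bounded.

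With $\bv$ strong, the sources $(\bv\cdot\nabla)\bv$ and $\mathcal F(\bv,q)$ lie in better $L^p$ spaces, and maximal $L^p$ regularity for the perturbed Stokes system on $\Omega_F$---whose boundary $\partial\Omega\cup\partial S_0$ is smooth---upgrades $\bv$ to $W^{2,p}$ in space and $W^{1,p}$ in time and $q$ to $W^{1,p}$, the announced intermediate regularity. Feeding the improved stress traces into the rigid ODEs and differentiating them produces a matching gain for $(\ba,\bomega)$, hence smoother coefficients for $\bX$, which in turn licenses differentiating the transformed equations once more in space and time. Iterating this coupled loop---fluid regularity $\Rightarrow$ body regularity $\Rightarrow$ smoother change of variables $\Rightarrow$ higher fluid regularity---and using parabolic smoothing so that each estimate holds on $[\tau,T]$ for every $\tau>0$, one reaches $\bu\in C^\infty((0,T]\times\Omega_F(t))$ and $\ba,\bomega\in C^\infty((0,T])$.

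The main obstacle is exactly this two-way coupling. Unlike pure Navier--Stokes, the operator on $\Omega_F$ has time-dependent coefficients whose smoothness is not given in advance but is part of the conclusion: the regularity of $\bX$ is tied to that of $(\ba,\bomega)$, which is itself produced by boundary traces of $\T(\bu,p)$. Closing each induction step therefore demands that the current fluid gain always suffice to control one further time-derivative of the rigid motion, and conversely. This is where the standing hypothesis $\tfrac{d}{dt}\ba,\tfrac{d}{dt}\bomega\in L^\infty(0,T)$ is indispensable: it guarantees that at the base of the induction the transformed system is a genuine lower-order perturbation of a constant-coefficient Stokes operator, so that the perturbation terms never spoil the maximal-regularity estimate that drives the bootstrap.
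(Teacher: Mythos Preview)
Your outline captures the broad strategy---transform to a fixed domain and bootstrap---but it diverges from the paper in two essential ways and contains a genuine gap in the step that actually distinguishes Theorem~\ref{mainResult} from Theorem~\ref{mainResult2}.

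First, the paper does not test the weak solution with a Stokes operator to extract $H^2$ bounds. Instead it \emph{linearizes} \eqref{FSINoslip} around the given weak solution $(\widetilde{\bu},\widetilde{p},\widetilde{\bq},\widetilde{\bomega})$, proves that the linear problem has a unique weak solution equal to $\widetilde{\bu}$ (Lemma~\ref{uniqueness_1}), and then \emph{constructs} a strong solution of the linear problem by a Banach fixed-point argument combined with the maximal $L^p$--$L^q$ regularity of the coupled fluid--rigid-body Stokes operator (Theorem~\ref{regularity_fixedPoint}). Your proposal to ``test with $A\bv$'' is a formal manipulation that is not directly available for a weak solution, and in the coupled setting the relevant operator is the one from \cite{GGH13}, not the bare Stokes operator on $\Omega_F$. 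This alternative route can perhaps be made to work, but it is not what the paper does and you have not indicated how the testing is justified at the level of a Leray--Hopf solution.

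Second, and more seriously, you misidentify both the role and the location of the hypothesis $\tfrac{d}{dt}\ba,\tfrac{d}{dt}\bomega\in L^\infty(0,T)$. The base of the induction---the passage from weak to strong, and then to $W^{2,p}\cap W^{1,p}$ for all $p$---does \emph{not} require this assumption at all; that is precisely the content of Theorem~\ref{mainResult2} and Proposition~\ref{strong}. The assumption enters only in the higher time-derivative step (Proposition~\ref{time_derivatives}), and for a reason you do not address: after formally differentiating the transformed system in $t$, one must show that the unique strong solution of the differentiated system actually equals $t\partial_t\bU$. This is delicate because there is no a~priori control on $\partial_t P$, so one cannot simply differentiate \eqref{FixedDomain_2} and read off the identity. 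The paper resolves this via a Galerkin approximation in which the pressure is eliminated by multiplying by $\mathbb{G}=\nabla\bX^T\nabla\bX$ and testing against divergence-free functions; the $L^\infty$ bound on the accelerations is needed precisely to control the term $\partial_t(\mathbb{G}\mathcal{M})(\bU^m)$ arising from the time derivative of the transport coefficients (see Section~\ref{Section:TimeDerivatives_estimates}). Your sentence ``differentiating them produces a matching gain'' skips this entire mechanism, which is the technical heart of the proof of Theorem~\ref{mainResult}.
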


It is a classical result in the theory of Navier-Stokes equations that any weak solution to $3D$ Navier-Stokes equations satisfying condition \eqref{LrLs} is smooth (see e.g. \cite{escauriaza2003img} where also critical case $s=\infty$ and $r=3$ was solved, and references within). Our result is generalization of this classical result to the fluid - rigid body system \eqref{FSINoslip}. Notice that we have two extra assumptions. The first assumption says that rigid body do not  touch the boundary of $\Omega$, i.e. the fluid domain does not degenerate. It is well known (e.g. \cite{Star04}) that this condition is necessary since the domain degeneration leads to non-smooth solutions. The second condition, i.e. boundedness of the rigid body acceleration, is somewhat unexpected, and we will later elaborate more on technical reasons that give rise to that condition. In absence of this condition we can show that solutions are strong:

\begin{teo}\label{mainResult2}	
Let $(\bu,\bB)$ be a weak solution to the system \eqref{FSINoslip}. Assume that $d(S(t),\partial\Omega)>\delta$, for some constant $\delta>0$. 
If $\bu$ satisfies Prodi-Serrin condition \eqref{LrLs}
then
\begin{align}\label{LpStrong}
&\bu \in L^{p}(\varepsilon,T;W^{2,p}(\Omega_F(t)))\cap
W^{1,p}(\varepsilon,T;L^{p}(\Omega_F(t))),\, \ba,\bomega\in W^{1,p}(\varepsilon,T)
\end{align}
for all $\varepsilon>0$ and for all $1\leq p<\infty$.
\end{teo}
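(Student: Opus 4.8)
The plan is to freeze the geometry, reduce \eqref{FSINoslip} to a perturbed Navier--Stokes system on the fixed reference domain $\Omega_F$, and then bootstrap regularity through maximal $L^p$ estimates for the associated Stokes operator, using \eqref{LrLs} to tame the convective term. Since the energy inequality gives $\bu\in L^\infty(0,T;L^2(\Omega))$ and the rigid velocity $\ba+\bomega\times(\bx-\bq)$ is the restriction of $\bu$ to the set $S(t)$ of positive measure, one already has $(\ba,\bomega)\in L^\infty(0,T)$ a priori. Because $d(S(t),\partial\Omega)>\delta$, I would construct a diffeomorphism $\bX(t,\cdot)\colon\Omega_F\to\Omega_F(t)$ equal to the rigid motion $\by\mapsto\bq(t)+\Q(t)(\by-\bq(0))$ on a $\delta/2$-neighbourhood of $S_0$ and to the identity near $\partial\Omega$, glued by a fixed smooth cut-off. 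Its spatial regularity is then unrestricted, while its time regularity is dictated solely by that of $(\ba,\bomega)$; at the outset $\bX$ is therefore Lipschitz in time, with $\nabla\bX$ and $(\nabla\bX)^{-1}$ bounded in terms of $\delta$ alone.

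Pulling \eqref{FSINoslip} back by $\bX$, the unknown becomes the global velocity constrained to be rigid on $S_0$, and I would work in the time-independent, divergence-free ``rigid-on-the-body'' space $V$ obtained by freezing \eqref{FluidFS}. On $V$ the monolithic fluid--structure Stokes operator $A$ generates an analytic semigroup and possesses maximal $L^{p_0}$--$L^{q_0}$ regularity. The transformed problem takes the abstract form $\partial_t\bu+A\bu=F$, where $F$ gathers the convective term $-(\bu\cdot\nabla)\bu$ together with the lower-order contributions generated by the change of variables, namely products of $\nabla\bX-\I$ and $\partial_t\bX$ with first and second spatial derivatives of $\bu$. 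Once $\bu$ is controlled, the pressure is recovered from the divergence constraint by a de Rham and Bogovskii type argument, so that its $W^{1,p}$ regularity is a consequence of the $W^{2,p}$ regularity of $\bu$.

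The regularization argument then runs on an arbitrary interval $(\varepsilon,T)$. First, testing the projected equation with $A\bu$ and using \eqref{LrLs} together with Young's inequality to absorb the convective term, I would upgrade $\bu$ to a strong solution, $\bu\in L^\infty(\varepsilon/2,T;H^1)\cap L^2(\varepsilon/2,T;H^2)$, whence also $(\ba,\bomega)\in H^1(\varepsilon/2,T)$. From this the convective term lies in $L^2(\varepsilon/2,T;L^3)$, and picking $t_0\in(\varepsilon/2,\varepsilon)$ with $\bu(t_0)\in H^2$ (admissible for a.e. $t_0$, and lying in the relevant real-interpolation trace space), maximal regularity gives $\bu\in L^2(t_0,T;W^{2,3})\cap W^{1,2}(t_0,T;L^3)$. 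Sobolev embedding improves the integrability of $\bu$ and $\nabla\bu$, which places $F$ in a better mixed-norm space; reapplying maximal regularity and iterating, each step enlarging the admissible exponents, one reaches any $p<\infty$ in finitely many steps and obtains \eqref{LpStrong} for $\bu$. In parallel, the momentum equations express $\tfrac{d}{dt}\ba$ and $\tfrac{d}{dt}\bomega$ as surface integrals of the stress $-p\I+2\D\bu$ over $\partial S(t)$; by the trace theorem the improved $W^{2,p}\times W^{1,p}$ regularity of $(\bu,p)$ puts these integrals in $L^p(\varepsilon,T)$, so $\ba,\bomega\in W^{1,p}(\varepsilon,T)$.

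The principal difficulty is the feedback between geometry and fluid regularity: the coefficients of the pulled-back system are only as smooth in time as $\partial_t\bX$, hence as $(\ba,\bomega)$, while maximal $L^p$ regularity for the resulting non-autonomous problem requires control of the time modulus of continuity of these coefficients. One cannot therefore fix the geometry once and for all beyond the first, $L^2$-based stage; instead I would run a \emph{coupled} bootstrap in which the gain in $(\ba,\bomega)$ regularity obtained from the force integrals is fed back to upgrade $\partial_t\bX$ at the same rate as the fluid regularity improves, the geometric terms in $F$ being treated as genuinely lower order and absorbed by the leading Stokes part on short time windows (a smallness-in-time or interpolation argument is needed to justify this subordination, together with a consistent choice of the starting times $t_0$ across iterations). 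Notably, the scheme halts exactly at $W^{1,p}$ for $(\ba,\bomega)$: it is precisely the stronger hypothesis $\tfrac{d}{dt}\ba,\tfrac{d}{dt}\bomega\in L^\infty$ of Theorem~\ref{mainResult} that would make $\partial_t\bX$ regular enough in time to continue the bootstrap up to $C^\infty$.
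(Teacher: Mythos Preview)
Your overall architecture---transform to the fixed reference domain, invoke maximal $L^p$ regularity for the coupled fluid--rigid Stokes operator, and bootstrap---is the right one, and your closing observation about why the scheme stops at $W^{1,p}$ for $(\ba,\bomega)$ is exactly right. But the paper's proof differs from yours in one structural respect that closes a genuine gap in your argument.

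You propose to work directly with the weak solution: ``testing the projected equation with $A\bu$'' to obtain the first $L^2_tH^2_x$ upgrade. This is formal. A weak solution lies a priori only in $L^\infty_tL^2_x\cap L^2_tH^1_x$, so $A\bu$ is not defined, and in the moving-domain setting there is no off-the-shelf approximation (Galerkin, regularisation) that justifies the estimate---you do not say how you would do it. The paper sidesteps this entirely by \emph{linearising}: it freezes the convective coefficient, studying the linear problem $\partial_t\bu+(\widetilde\bu\cdot\nabla)\bu=\divg\T(\bu,p)$ with $\widetilde\bu$ the given weak solution treated as data. On the fixed reference domain this becomes a linear perturbation of the Stokes--rigid-body system, and a strong solution is \emph{constructed} by a Banach fixed-point argument, using the maximal regularity theorem of Geissert--G\"otze--Hieber together with the smallness of $\|\widetilde\bU\|_{L^r_tL^s_x}$ on short intervals to make the map a contraction. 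A short uniqueness lemma (Gronwall, using the Prodi--Serrin condition to control the trilinear term) then identifies this constructed strong solution with $\widetilde\bu$ itself. Thus one never tests the weak formulation with anything illegal; the regularity is obtained for a linear problem and transferred by uniqueness. This linearise/construct/identify mechanism is the missing idea in your proposal.

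A minor point: the diffeomorphism you describe (rigid motion glued to the identity by a cut-off) is not volume-preserving, so the pulled-back velocity is not divergence-free and you lose the clean Stokes structure. The paper uses the Inoue--Wakimoto volume-preserving construction and the associated Piola-type transform $\bU=\nabla\bY\,\bu\circ\bX$, which is what keeps $\divg\bU=0$.
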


\subsection{Literature review}

The global regularity of the incompressible Navier-Stokes equations in dimension $2$ is a well-known result which was first proved by  by Leray \cite{leray1934essai} and Ladyzhenskaya \cite{ladyzhenskaia1959solution}, but in dimension $3$ it is a famous open problem. However, there are regularity results for weak solutions that additionally satisfy Prodi-Serrin condition proved by Leray \cite{leray1934mouvement} 
for $\Omega=\R^3$, including the case $s=\infty$, and by Fabes, Lewis and Riviere \cite{fabes1977singular, fabes1977boundary}, and Sohr \cite{sohr1983regularitatstheorie} for domains with a bounded boundary. These regularity results were extended to the case $s=3$ by von Wahl \cite{vonWhal1986} on the bounded domain, and by Giga \cite{giga1986solutions} on the domain with a bounded boundary.
There are also plenty of conditional regularity results with other types of conditions, e.g. on gradient of the fluid velocity or the pressure (see e.g. Remarks 5.6 and 5.9 in \cite{GaldiNS00}).{\footnote{Let us mention also the conditional regularity of the type that one component of the velocity field is more regular, see \cite{NeP}.
}}

In the case of the fluid - rigid body system theory is much less developed. $2D$ case is studied in \cite{bravin2018weak,bravin20202d,GlassSueur} where existence and uniqueness of global weak solution is proved provided that rigid body does not touch the boundary. Moreover, they show that these solutions are strong away from $t=0$. In the three dimensional case there are results of local-in-time strong solutions or global-in time solutions for small initial data \cite{cumsille2008well, DE2, GGH13, MaityTucsnak18, Takahashi03, dintelmann2009strong}. Moreover, global-in-time existence (or existence up to the time of contact) of Leray - Hopf type weak solution were studied in \cite{ConcaRigid00, DE, Feireisl03, GunzRigid00, feireisl2002motion}. We also mention the existence results in the case of slip boundary conditions \cite{gerard2014existence, ChemetovSarka, bravin2018weak, bravin20202d, wang2014strong}. 
Uniqueness of weak solutions is still an open problem, but results of weak-strong uniqueness type were proved in both slip and no-slip case (\cite{chemetov2017weak, BorisSarkaAna2020weak}) which state that the weak solution satisfying additional condition on the fluid velocity is unique in the larger class of weak solutions.

Our regularity result stated in Theorem \ref{mainResult} is motivated by the classical regularity result for the incompressible  Navier-Stokes case \cite[Theorem 5.2]{GaldiNS00}). To the best of our knowledge, this is the first result on the regularity of weak solution for the fluid-rigid body interaction problem. Inspired by works \cite{GGH13,Takahashi03,TakahashiTucsnak04} we use fixed point theorem in combination with the maximal regularity result for the Stokes problem. The proof strategy in more details is outline in the next Section.

\section{Proof strategy}

Here we follow the classical approach to linearize problem \eqref{FSINoslip} around a solution that satisfies the Prodi-Serrin condition, and to analyse the regularity properties of the solution to the linear system. Since the linearized problem has a unique solution and a solution to the nonlinear problem is the solution to the linearized problem, proving the regularity for the linearized problem is enough. This approach has also been used to prove a conditional regularity result for the incompressible Navier-Stokes equations (see e.g. \cite[Section 5]{GaldiNS00}). However, adapting this strategy to the fluid - rigid body system is very technically challenging as outlined below. The main steps of the proof are:
\begin{itemize}
    \item \textbf{Linearization.} Let $(\widetilde{\bu},\widetilde{p},\widetilde{\bq},\widetilde{\bomega})$ be a weak solution to the problem \eqref{FSINoslip} which satisfies Prodi-Serrin condition. We linearize around that solution and obtain a linear fluid - rigid body problem where the movement of the rigid body is prescribed by $\widetilde{\bq}$ and $\widetilde{\bomega}$. We show that the obtained linear problem has a unique weak solution which we denote by $({\bu},{p},{\ba},{\bomega})$.
    \item \textbf{Transformation to the fixed reference domain.} Since the linear problem is posed in the moving domain it is convenient to transform it a cylindrical domain, i.e. to the reference domain that does not depend on time. We use a change of variable that preserves the divergence free condition and is rigid near the rigid body. This change of variables was introduced in \cite{inoue1977existence} and by now is a standard tool in analysis of the fluid - rigid body system. Solution to the linearized problem on the cylindrical domain is denoted by $(\bU,P,\bA,\bOmega)$.
    \item \textbf{Strong solution.} We show that if $\widetilde{\bu}$ satisfied the Prodi-Serrin condition then $(\bU,P,\bA,\bOmega)$ is the strong solution, i.e. equations \eqref{FSINoslip} are satisfied in the $L^p$ sense. The main technical tool is the fixed point theorem and a maximal regularity result for the fluid - rigid body operator. This finishes the proof of Theorem \ref{mainResult2}.
    \item \textbf{Higher regularity.} In this step the goal is to bootstrap the argument from the previous step to get the higher regularity estimates. Therefore, first we need to prove regularity of the time derivatives. This is achieved by analysing the system obtained by formally differentiating in time the linearized system from the previous steps. The main issue here is to prove that the solution to the system obtained by taking the time derivative is exactly the time derivative of the solution. This is a nontrivial step because we do not have any a priori estimates for time derivatives and our solution is obtained by the fixed point procedure and thus it is not possible to directly justify the formal estimates. In this step we need an additional regularity condition on the rigid body acceleration $\frac{\d^2}{\dt^2}\widetilde{\bq},\frac{\d}{\dt}\widetilde{\bomega}\in L^{\infty}(0,T)$.
\end{itemize}

The paper is organized as follows. In Section \ref{Section:Linear}
we introduce the linearized problem and show that it admits a unique weak solution which is equal to the solution of \eqref{FSINoslip}. The second step, transformation to the fixed reference domain is done in Section \ref{Section:Transformed}. There we also state Proposition \ref{strong} (which corresponds to third step of the proof), and Propositions  \ref{time_derivatives} and \ref{spatial_derivatives} which correspond to the last step of the proof. The proofs of these Propositions will be presented in Sections \ref{Section:StrongSolution}, \ref{Section:time_derivatives} and \ref{Section:spatial_derivatives}, respectively. The technical core of the paper are Sections \ref{Section:StrongSolution} and \ref{Section:time_derivatives}. The additional condition on the boundedness of the rigid body acceleration is needed in Section \ref{Section:time_derivatives}. Here we want to point out that even though formal estimates do not require this condition, this condition is needed in rigorous justification of the estimates. Namely, the standard methods for construction of the solutions, such as Galerkin method or the regularization method, do not seem to work in this context because of the presence of the moving boundary. Finally, few technical proofs are relegated to the Appendix. Since the proof involves a lot of notation, for the convenience of the reader we have summarized all notations used in the paper in table at the end of Appendix.

\subsection{Linear problem}
\label{Section:Linear}
Let $(\widetilde{\bu},\widetilde{p},\widetilde{\bq},\widetilde{\bomega})$ be a weak solution to the problem \eqref{FSINoslip} in a sense of Definition \ref{definition} which additionally satisfies the Prodi-Serrin condition. Let the rigid body domain $S(t)$ be defined by $(\widetilde{\bq},\widetilde{\bomega})$ through formulas \eqref{is} and \eqref{set}  as well as the fluid domain $\Omega_{F}(t) = \Omega\setminus\overline{S(t)}$. We define the following  linear problem:
\\
Find $({\bu},{p},{\ba},{\bomega})$ such that
\begin{equation}	\label{Linear}
	\begin{array}{l}
	\left.
	\begin{array}{l}
	\partial_{t}{\bu}+({\widetilde{\bu}}\cdot \nabla ){\bu}
	= \divg \left( {\T}({\bu},{p})\right) , \\
	\divg{\bu} = 0
	\end{array}
	\right\} \;\mathrm{in}\;(0,T)\times{\Omega_{F}(t)},
	\\
	\left.
	\begin{array}{l}
	\frac{\d}{\dt}{\ba} = -\int_{\partial{S(t)}}{\T}({\bu},{p}) \bn\,\d\bgamma(\bx), \\
	\frac{\d}{\dt}(\J{\bomega}) = -\int_{\partial {S(t)}}(\bx-{\widetilde{\bq}}(t))\times {\T}({\bu},{p})\bn\,\d\bgamma(\bx)
	\end{array}
	\right\} \;\mathrm{in}\;(0,T),
	\\
	{\bu} ={\ba}+{\bomega}\times (\bx-{\widetilde{\bq}}),\quad \mathrm{on}\;(0,T)\times\partial {S(t)},
	\\
	{\bu} = 0 \quad \mathrm{on}\, \partial \Omega,
	\\
	{\bu}(0,.)=\bu_{0}\qquad \mathrm{in}\;\Omega,\quad {\ba}(0)=\ba_{0},\quad
	{\bomega}(0)=\bomega_{0}.
	\end{array}
\end{equation}
Note that the problem is linear since the motion of the domain is a priori given and is not computed via $(\ba,\bomega)$.
\begin{definition}\label{LinearWeak}
A weak solution for \eqref{Linear} is a function ${\bu}\in  L^{2}(0,T;V(t))\cap L^{\infty
}(0,T;L^2(\Omega))$ which satisfies
\begin{equation}	\label{weak2}
	\begin{split}
	\int_{0}^{t}\int_{\Omega}
	{\bu} \cdot \partial_{t}\bphi
	\, \d\bx\d \tau
	&+ \int_{0}^{t}\int_{\Omega_F(t)}\{
	({\widetilde{\bu}}\otimes {\bu}) :\nabla\bphi^T
	- 2\,\D{\bu}:\D\bphi\,\}\, \d\bx\d \tau\\
	&- \int_{\Omega }{\bu}(t)\cdot\bphi(t)\,\d\bx
	= - \int_{\Omega }\bu_{0}\cdot\bphi(0)\,\d\bx
	\,- \int_{0}^{t}\widetilde{\ba}\times{\ba}\cdot\bphi_{\bomega},
	\end{split}
\end{equation}
for all $\bphi\in H^1(0,T;V(t))$, where 
$$
\bphi(t,\bx) = \bphi_{a}(t) + \bphi_{\bomega}(t)\times(\bx-\widetilde{\bq}(t)),
\quad  \bx\in S(t).
$$
\end{definition}

First, we show the uniqueness result for the linearized problem \eqref{Linear}. 
\begin{lemma}	\label{uniqueness_1}
	Let $\widetilde{\bu}$ be a weak solution to the \eqref{FSINoslip} which additionally satisfies the Prodi-Serrin condition,
	and let ${\bu}$ be a weak solution for \eqref{Linear}. Then $\bu=\widetilde{\bu}$ almost everywhere in $(0,T)\times\Omega$.
\end{lemma}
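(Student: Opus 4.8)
The plan is to reduce the statement to a uniqueness assertion for the linear problem \eqref{Linear} alone. The first observation is that the given nonlinear weak solution $\widetilde{\bu}$, together with $\widetilde{\ba}=\frac{\d}{\dt}\widetilde{\bq}$ and $\widetilde{\bomega}$, is itself a weak solution of \eqref{Linear} in the sense of Definition \ref{LinearWeak}. Indeed, since the moving set $S(t)$ entering \eqref{Linear} is by construction the one generated by $(\widetilde{\bq},\widetilde{\bomega})$, substituting $\bu=\widetilde{\bu}$ turns the linear convective term $(\widetilde{\bu}\otimes\bu):\nabla\bphi^T$ into $(\widetilde{\bu}\otimes\widetilde{\bu}):\nabla\bphi^T$, which --- because $\widetilde{\bu}\otimes\widetilde{\bu}$ is symmetric --- equals $(\widetilde{\bu}\otimes\widetilde{\bu}):\D\bphi$, exactly the term appearing in the nonlinear identity \eqref{weak}; the gyroscopic coupling term $\widetilde{\ba}\times\ba\cdot\bphi_{\bomega}$ is produced by rewriting $\frac{\d}{\dt}(\J\bomega)$ and is already implicit in \eqref{FSINoslip}. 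Hence \eqref{weak} implies \eqref{weak2} for $\widetilde{\bu}$, and it remains only to show that \eqref{Linear} admits at most one weak solution.

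For uniqueness I would use the energy method on the difference. Let $\bu_1,\bu_2$ be two weak solutions with the same data and set $\bw=\bu_1-\bu_2$, with rigid parts $\ba_w,\bomega_w$; then $\bw$ satisfies \eqref{weak2} with zero right-hand side. Formally testing with $\bphi=\bw\in V(t)$ yields the balance
\[
\tfrac{1}{2}\tfrac{\d}{\dt}\|\bw\|_{L^2(\Omega)}^2 + 2\|\D\bw\|_{L^2(\Omega_F(t))}^2 = -\int_{\Omega_F(t)}(\widetilde{\bu}\otimes\bw):\nabla\bw^T\,\d\bx - \widetilde{\ba}\times\ba_w\cdot\bomega_w,
\]
where the moving-boundary fluxes cancel because on $\partial S(t)$ one has $\widetilde{\bu}=\bu_S$ and $\bw=\ba_w+\bomega_w\times(\bx-\widetilde{\bq})$. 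The convective term is controlled by the Prodi--Serrin hypothesis: by H\"older, the interpolation $\|\bw\|_{L^{2s/(s-2)}}\le\|\bw\|_{L^2}^{1-3/s}\|\bw\|_{L^6}^{3/s}$, and Korn's inequality on $V(t)$ (giving $\|\bw\|_{H^1(\Omega)}\le C\|\D\bw\|_{L^2(\Omega_F(t))}$) one bounds it by $C\|\widetilde{\bu}\|_{L^s}\|\D\bw\|_{L^2}^{1+3/s}\|\bw\|_{L^2}^{1-3/s}$; Young's inequality with exponents dictated by $\frac{2}{r}+\frac{3}{s}=1$ absorbs the dissipation and leaves $C\|\widetilde{\bu}\|_{L^s}^{r}\|\bw\|_{L^2}^2$. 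The gyroscopic term is harmless since the rigid velocities $\ba_w,\bomega_w$ are dominated by $\|\bw\|_{L^2(\Omega)}$ and $\widetilde{\ba}\in L^\infty(0,T)$ follows from $\widetilde{\bu}\in L^\infty(0,T;L^2(\Omega))$. As $\widetilde{\bu}\in L^r(0,T;L^s)$ makes $\|\widetilde{\bu}\|_{L^s}^r$ integrable in time, Gr\"onwall's inequality together with $\bw(0)=0$ forces $\bw\equiv0$.

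The main obstacle is the rigorous justification of this identity, since $\bw$ is a priori only in $L^2(0,T;V(t))\cap L^\infty(0,T;L^2(\Omega))$ and hence not an admissible test function, and the inner products are taken on a time-dependent domain. To handle this I would transport \eqref{Linear} to the fixed reference domain via the divergence-preserving, rigid-near-the-body change of variables employed later in the paper, turning the system into a parabolic problem on a cylinder; there one identifies $\partial_t\bw$ in the appropriate dual space and applies a Steklov/time-mollification argument of Lions--Magenes type to legitimize testing with $\bw$ and to recover the continuity of $t\mapsto\|\bw(t)\|_{L^2(\Omega)}^2$. The change of variables generates additional lower-order terms whose coefficients depend on $\widetilde{\bq},\widetilde{\bomega}$ and their derivatives, controlled using only the absolute continuity of $\widetilde{\bq},\Q$ from Definition \ref{definition}; these contribute further benign terms to the Gr\"onwall inequality. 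Verifying the cancellation of the moving-boundary fluxes against the rigid-body coupling, and carefully tracking these geometric correction terms through the time-regularization, is the technical heart of the argument.
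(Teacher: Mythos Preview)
Your approach is essentially the same as the paper's: subtract the weak formulations for $\widetilde{\bu}$ and $\bu$, test the difference with itself, control the convective integral via the Prodi--Serrin condition, and conclude by Gr\"onwall. The paper carries out exactly this energy argument on $\bar{\bu}=\widetilde{\bu}-\bu$ and arrives at the same structure of terms (the gyroscopic remainder appears in the paper as $\int_{S(\tau)}(\widetilde{\bu}-\widetilde{\ba})\times\bar{\bu}\cdot\bar{\bomega}$, which is your $\widetilde{\ba}\times\ba_w\cdot\bomega_w$ up to a rewriting).

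The one place where you diverge is the rigorous justification of testing with the difference. You propose to first transport everything to the fixed reference domain and then run a Steklov/Lions--Magenes mollification there; this would work, but it is heavier than necessary and forces you to track the geometric correction terms you mention. The paper instead stays on the moving domain and invokes the regularization $\bar{\bu}^h$ constructed in \cite[Section~2.3]{BorisSarkaAna2020weak}, which is tailored to the space $V(t)$: one substitutes $\bphi=\bar{\bu}^h$, passes to the limit $h\to 0$ using \cite[Lemma~2.4]{BorisSarkaAna2020weak}, and obtains the energy identity directly without ever changing variables. Both routes are valid; the paper's is shorter because the needed mollifier on the moving domain has already been built in the authors' earlier work.
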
	
\begin{proof}
Let us denote $\bar{\bu}=\widetilde{\bu}-\bu$, $\bar{\ba}=\widetilde{\ba}-\ba$, $\bar{\bomega}=\widetilde{\bomega}-\bomega$.
By subtracting the equations
\begin{equation*}
\begin{split}
\int_{0}^{t}\int_{\Omega}
\widetilde{\bu} \cdot \partial_{t}\bpsi
\, \d\bx\d \tau
+ \int_{0}^{t}\int_{\Omega_F(t)}\{
(\widetilde{\bu}\otimes \widetilde{\bu})& :\nabla\bpsi^T
- 2\,\D\widetilde{\bu}:\D\bpsi\,\}\, \d\bx\d \tau\\
&- \int_{\Omega }\widetilde{\bu}(t)\bpsi(t)\,\d\bx
= - \int_{\Omega }\bu_{0}\bpsi(0)\,\d\bx,
\end{split}
\end{equation*}
\begin{equation*}
\begin{split}
\int_{0}^{t}\int_{\Omega}
{\bu} \cdot \partial_{t}\bpsi
\, \d\bx\d \tau
&+ \int_{0}^{t}\int_{\Omega_F(t)}\{
(\widetilde{\bu}\otimes {\bu}) :\nabla\bpsi^T
- 2\,\D{\bu}:\D\bpsi\,\}\, \d\bx\d \tau\\
&- \int_{\Omega }{\bu}(t)\bpsi(t)\,\d\bx
= - \int_{\Omega }\bu_{0}\bpsi(0)\,\d\bx
+ \int_{0}^{t}\widetilde{\ba}\times{\ba}\cdot\bpsi_{\bomega}\,\d\tau,
\end{split}
\end{equation*}
we get
\begin{equation*}
\begin{split}
\int_{0}^{t}\int_{\Omega}
\bar{\bu} \cdot \partial_{t}\bphi
\, \d\bx\d \tau
&+ \int_{0}^{t}\int_{\Omega_F(t)}\{
(\widetilde{\bu}\otimes\bar{\bu}) :\nabla\bphi^T
- 2\,\D\bar{\bu}:\D\bphi\,\}\, \d\bx\d \tau
- \int_{\Omega }\bar{\bu}(t)\bphi(t)\,\d\bx \\
& = \int_{0}^{t}
\underbrace{\widetilde{\ba}\times\bar{\ba}\cdot\bphi_{\bomega}}
_{=\int_{S(\tau)}\widetilde{\ba}\times\bar{\bu}\cdot\bphi_{\bomega}}\d\tau.
\end{split}
\end{equation*}
We substitute $\bphi = \bar{\bu}^h$ and let $ h \to 0 $. Here $\bu^{h}$ is a regularization of $\bu$ as defined in \cite[Section 2.3]{BorisSarkaAna2020weak}. We use \cite[Lemma 2.4]{BorisSarkaAna2020weak} to pass to the limit and get:
\begin{equation*}
\frac{1}{2}\|\bar{\bu}(t)\|_{L^2(\Omega)}^2 
+ 2\int_{0}^{t}\int_{\Omega_F(\tau)}|\D\bar{\bu}|^2\,\d\bx\d \tau
+ \int_{0}^{t}\int_{\Omega_F(\tau)} \widetilde{\bu}\cdot\nabla\bar{\bu}\cdot\bar{\bu}\,\d\bx\d \tau
= \int_{0}^{t}\int_{S(\tau)}
(\widetilde{\bu}-\widetilde{\ba})\times{\bar{\bu}}\cdot\bar{\bomega}\,\d\bx\d \tau .
\end{equation*}
{To estimate the third term we use the Prodi-Serrin condition as in \cite{BorisSarkaAna2020weak}, and in the end we get the inequality of the form}
\begin{equation*}
\|\bar{\bu}(t)\|_{L^2(\Omega)}^2 
\leq \int_{0}^{t} f(\tau)\|\bar{\bu}(\tau)\|_{L^2(\Omega)}^2\,\d\tau,
\end{equation*}
where $f\in L^1(0,t)$,
so Gronwall lemma implies that $\bar{\bu}=0$.
\end{proof}

Therefore, by Lemma \ref{uniqueness_1} it follows that $\widetilde{\bu}$ is a unique weak solution to the linearized problem \eqref{Linear}. Therefore, in the rest of the paper we will consider linear problem \eqref{Linear} and prove that the solution to the linear problem is regular.

\subsection{The transformed problem}
\label{Section:Transformed}

In order to transform the problem to the cylindrical domain we use a change of variables inspired by Inoue and Wakimoto \cite{inoue1977existence}, i.e. we define the mapping $(\bu,p,\ba,\bomega)\mapsto(\bU,P,\bA,\bOmega)$
with
\begin{equation} \label{ChangeOfVariables}
\left\{
\begin{array}{l}
{\bU}(t,\by)=\nabla\bY(t,\bX(t,\by)){\bu}(t,\bX(t,\by)),
\\
{P}(t,\by)={p}(t,\bX(t,\by)),
\\
{\bA}(t)=\Q^{T}(t){\ba}(t),
\\
{\bOmega}(t)=\Q^{T}(t){\bomega}(t),
\end{array}
\right.
\end{equation}
where $\bX(t)$ is a volume-preserving diffeomorphism from initial to the physical domain described in \cite{BorisSarkaAna2020weak}, Appendix A.1, and $\bY(t)$ is its inverse. By construction, $\bX$, $\bY$ belong to $W^{1,\infty}(0,T;C_c^{\infty}(\Omega))$ and depend on the domain of given solution, i.e. translation velocity $\widetilde{\ba}$ and angular velocity $\widetilde{\bomega}$.
In the following, $(\bU,P,\bA,\bOmega)$ and $(\widetilde{\bU},\widetilde{P},\widetilde{\bA},\widetilde{\bOmega})$ denote  the transformations of solutions $(\bu,p,\ba,\bomega)$ and $(\widetilde{\bu},\widetilde{p},\widetilde{\ba},\widetilde{\bomega})$ to the  cylindrical domain by mapping \eqref{ChangeOfVariables}. 
Notice that lowercase letters refer to the solutions defined on the physical moving domain and uppercase letters to the solutions defined on the fixed reference domain.
Therefore $(\widetilde{\bU},\widetilde{P},\widetilde{\bA},\widetilde{\bOmega})$ is the solution to the following system (which is equivalent to \eqref{Linear}):
\begin{equation}	\label{FixedDomain_2}
	\begin{array}{l}
	\left.
	\begin{array}{l}
	\partial _{t}{\bU}-\triangle {\bU}+\nabla {P} = {F}(\bU,P), \\
	\divg{\bU} = 0
	\end{array}
	\right\} \;\mathrm{in}\;(0,T)\times\Omega_{F},
	\\
	\left.
	\begin{array}{l}
	\frac{\d}{\dt}{\bA}
	=-\int_{\partial S_{0}}\mathcal{T}({\bU},{P})\bN\,\d\gamma(\by)
	+ G(\bA)
	\\
	\frac{\d}{\dt}(\mathcal{J}{\bOmega})
	=
	-\int_{\partial S_{0}}
	\by\times {\mathcal{T}}({\bU},{P})\bN \,\d\gamma(\by)
	+ H(\bOmega)
	\end{array}
	\right\} \;\mathrm{in}\;(0,T),
	\\
	{\bU} = {\bOmega}\times\by + {\bA}\qquad \mathrm{on}\;(0,T)\times\partial S_0,
	\\
	{\bU} = 0\qquad \mathrm{on}\;(0,T)\times\partial\Omega,
	\\
	{\bU}(0,.)=\bu_0\qquad \mathrm{in}\;\Omega,\quad {\bA}(0)=\ba_0,\quad
	{\bOmega}(0)=\bomega_0
	\end{array}
\end{equation}
where
\begin{equation}	\label{rightF}
F(\bU,P)
:= (\mathcal{L}-\triangle ){\bU}
-\mathcal{M}{\bU}
-{\mathcal{N}}{\bU}
-(\mathcal{G}-\nabla ){P},
\end{equation}
\begin{equation}	\label{rihgtGH}
G(\bA) = -\widetilde{\bOmega}\times {\bA},\quad
H(\bOmega) = -\widetilde{\bOmega}\times (\mathcal{J}{\bOmega}),
\end{equation}
\begin{equation*}
\mathcal{T}({\bU},{P}) = -{P}\,\I + 2\D\bU,
\quad
\mathcal{J}(t)=\Q^T(t)\J(t)\Q(t).
\end{equation*}
The operator $\mathcal{L}$ is the transformed Laplace operator and it is given by
\begin{align}  \label{OpL}
(\mathcal{L}\bU)_{i}& =\sum_{j,k=1}^{n}\partial
_{j}(g^{jk}\partial_k \bU_{i})+2\sum_{j,k,l=1}^{n}g^{kl}\Gamma
_{jk}^{i}\partial _{l}\bU_{j}  
+\sum_{j,k,l=1}^{n}\Big(
\partial _{k}(g^{kl}\Gamma
_{jl}^{i})+\sum_{m=1}^{n}g^{kl}\Gamma _{jl}^{m}\Gamma _{km}^{i}
\Big)
\bU_{j},
\end{align}
the convection term is transformed into
\begin{equation} \label{OpN}
({\mathcal{N}}({\bU}))_i
= \left(({\widetilde{\bU}}\cdot \nabla ){\bU}\right)_i 
+ \sum_{j,k=1}^n \Gamma ^i_{jk}{\widetilde{\bU}_j}{\bU}_k,
\end{equation}
the transformation of time derivative and gradient is given by
\begin{equation}	\label{OpM}
(\mathcal{M} \bU)_{i} = \sum _{j=1}^n \dot{\mathbf{Y}}_j \partial _j
\bU_i + \sum _{j,k=1}^n \Big(\Gamma _{jk}^i \dot{\mathbf{Y}}_k +
(\partial _k \mathbf{Y}_i)(\partial _j \dot{\mathbf{X}}_k)\Big)\bU_j,
\end{equation}
and the gradient of pressure is transformed as follows:
\begin{equation}  \label{OpP}
(\mathcal{G}P)_{i}=\sum_{j=1}^{n}g^{ij}\partial _{j}P
=
(\nabla\bY\nabla\bY^T\nabla P)_{i}.
\end{equation}
Here we have denoted the metric covariant tensor
\begin{equation}	\label{covariant}
g_{ij}=X_{k,i}X_{k,j},\qquad
X_{k,i}=\frac{\partial \bX_{k}}{\partial \by_{i}},
\end{equation}
the metric covariant tensor
\begin{equation}
g^{ij}=Y_{i,k}Y_{j,k}\qquad Y_{i,k}=\frac{\partial \bY_{i}}{\partial \bx_{k}},
\end{equation}
and the Christoffel symbol (of the second kind)
\begin{equation}	\label{Christoffel}
\Gamma _{ij}^{k}=\frac{1}{2}g^{kl}(g_{il,j}+g_{jl,i}-g_{ij,l}),\qquad
g_{il,j}=\frac{\partial {g_{il}}}{\partial \by_{j}}.
\end{equation}	
Note that operators $\mathcal{L},\mathcal{M},\mathcal{N}$ and $\mathcal{G}$ are linear and depend on transformation $\bX$, i.e. on functions $\widetilde{\bA}$ and $\widetilde{\bOmega}$.

The first step of the proof is to show that the linear problem \eqref{FixedDomain_2} admits a unique strong solution, which  by Lemma \ref{uniqueness_1} means that the solution to the original nonlinear problem is also a strong solution. Therefore Theorem \ref{mainResult2} follows from the following result:

\begin{proposition}\label{strong}
	Let $(\widetilde{\bU},\widetilde{\bA},\widetilde{\bOmega})\in  (L^{2}(0,T;V(0))\cap L^{\infty
	}(0,T;L^2(\Omega)))\times L^{\infty}(0,T)\times L^{\infty}(0,T)$ be a weak solution to the problem \eqref{FSINoslip} that satisfies the Prodi-Serrin condition. Then there exists a unique solution $(\bU,P,\bA,\bOmega)$ of \eqref{FixedDomain_2} in the sense of Definition \ref{LinearWeak} satisfying the following regularity properties
	\begin{align*}
&\bU \in L^{p}(\varepsilon,T;W^{2,p}(\Omega_F))\cap
W^{1,p}(\varepsilon,T;L^{p}(\Omega_F)),
\\
&P \in L^{p}(\varepsilon,T;\quotient{W^{1,p}(\Omega_F)}{\R}),
\\
&\bA,\bOmega\in
W^{1,p}(\varepsilon,T),
\end{align*}
for all $\varepsilon>0$ and for all $1\leq p<\infty$.
Moreover, by Lemma \ref{uniqueness_1},
$(\widetilde{\bU}, \widetilde{P},\widetilde{\bA},\widetilde{\bOmega})=({\bU}, P, {\bA},{\bOmega})$ and thus satisfies the same regularity properties. In particular, $(\widetilde{\bU},\widetilde{P},\widetilde{\bA},\widetilde{\bOmega})$ is a strong solution to problem \eqref{FSINoslip}.
\end{proposition}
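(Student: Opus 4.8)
The plan is to realize \eqref{FixedDomain_2} as a lower-order perturbation of the linear Stokes--rigid body evolution and to close the argument by a contraction in a maximal $L^p$-regularity space. First I would set up the \emph{reference operator}: on the solenoidal space adapted to the rigid constraint on $\partial S_0$ I would invoke the maximal $L^p$-regularity of the fluid--rigid body Stokes operator, i.e.\ the fact that the system $\partial_t\bU-\triangle\bU+\nabla P=f$, $\divg\bU=0$ in $\Omega_F$, coupled with $\frac{\d}{\dt}\bA=-\int_{\partial S_0}\mathcal{T}(\bU,P)\bN\,\d\gamma(\by)+g$ and $\frac{\d}{\dt}(\mathcal{J}\bOmega)=-\int_{\partial S_0}\by\times\mathcal{T}(\bU,P)\bN\,\d\gamma(\by)+h$, together with $\bU=\bOmega\times\by+\bA$ on $\partial S_0$ and $\bU=0$ on $\partial\Omega$, generates an analytic semigroup enjoying maximal $L^p$-regularity (as established in \cite{Takahashi03,TakahashiTucsnak04,GGH13,MaityTucsnak18}). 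This yields, for data $(f,g,h)\in L^p(0,T)$ and a compatible initial value in the trace space, a unique solution with $\bU\in L^p(W^{2,p})\cap W^{1,p}(L^p)$, $P\in L^p(\quotient{W^{1,p}}{\R})$, $\bA,\bOmega\in W^{1,p}$ and the attendant a priori bound.

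Next I would move the transformed lower-order operators in \eqref{rightF}--\eqref{rihgtGH} to the right-hand side and solve \eqref{FixedDomain_2} as the fixed point of the affine map $\Phi$ sending $(\bU,P,\bA,\bOmega)$ to the solution of the reference problem with forcing $(F(\bU,P),G(\bA),H(\bOmega))$. Since $\widetilde{\bA},\widetilde{\bOmega}\in L^\infty$, the diffeomorphism satisfies $\bX,\bY\in W^{1,\infty}(0,T;C_c^\infty)$ and reduces to the identity at the reference configuration, so on a short interval $(t_0,t_0+\tau)$ the quantities $g^{jk}-\delta^{jk}$, $\Gamma^i_{jk}$, $\nabla\bY\nabla\bY^T-\I$ and $\dot{\bX},\dot{\bY}$ are small in $L^\infty$. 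Consequently $(\mathcal{L}-\triangle)\bU$ and $(\mathcal{G}-\nabla)P$, though of second/first order, carry small coefficients, $\mathcal{M}\bU$ is first order with bounded coefficients, and $G,H$ are bounded; each is dominated by the top maximal-regularity norm times a factor that is small for $\tau$ small. Because $\Phi$ is affine, smallness of its Lipschitz constant gives a unique fixed point in the maximal-regularity ball, hence a unique strong solution on $(t_0,t_0+\tau)$; covering $(\varepsilon,T)$ by finitely many such intervals produces the global statement.

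The technical core, and the step I expect to be the main obstacle, is the convection operator $\mathcal{N}\bU$ in \eqref{OpN}, which is linear in $\bU$ but carries the coefficient $\widetilde{\bU}$ that is only of Prodi--Serrin class \eqref{LrLs}. I would estimate $\|\widetilde{\bU}\cdot\nabla\bU\|_{L^p(L^p)}\le\|\widetilde{\bU}\|_{L^r(L^s)}\|\nabla\bU\|_{L^{\rho}(L^{\sigma})}$ by H\"older in space and time, with $\frac1\sigma=\frac1p-\frac1s$ and $\frac1\rho=\frac1p-\frac1r$, and then interpolate $\nabla\bU$ between the top space coming from maximal regularity and a lower space controlled by the already-known bound $\bU\in L^2(0,T;V(0))\cap L^\infty(0,T;L^2)$; the relation $\frac{3}{s}+\frac{2}{r}=1$ is precisely what renders this term subcritical, so the top norm enters with a small coefficient on short time or with exponent strictly below one and can be absorbed. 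This gives an initial gain of integrability for some finite $p_0$, after which a bootstrap upgrades the solution to $W^{2,p}$ for every $p<\infty$, since improved integrability of $\bU$ places $\mathcal{N}\bU$ in successively better $L^p$ spaces. The loss of regularity at $t=0$ is genuine (the weak solution only guarantees $\bu_0\in L^2$), so I would run the estimates on $(\varepsilon,T)$ by choosing a starting time $t_0\in(0,\varepsilon)$ at which $\bU(t_0)$ lies in the required trace space — such $t_0$ exists because $\bU\in L^2(0,T;V(0))$ forces $\bU(t)\in V(0)\hookrightarrow H^1$ for a.e.\ $t$ — and then propagating the regularity forward.

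Finally, uniqueness in the strong class is immediate from Lemma \ref{uniqueness_1}: any strong solution is in particular a weak solution of \eqref{Linear}, the weak solution is unique and equals the transform of $\widetilde{\bu}$, whence $(\bU,P,\bA,\bOmega)=(\widetilde{\bU},\widetilde{P},\widetilde{\bA},\widetilde{\bOmega})$ and the claimed regularity transfers to the tilded quantities. Reading the change of variables \eqref{ChangeOfVariables} back to the physical moving domain then delivers the strong-solution statement for \eqref{FSINoslip}, and hence Theorem \ref{mainResult2}.
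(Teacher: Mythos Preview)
Your proposal is correct and follows essentially the same route as the paper: maximal $L^p$--regularity for the Stokes--rigid body operator (Theorem~\ref{regularity_fixedPoint}), a Banach fixed point on a short time interval treating $(\mathcal{L}-\triangle)\bU$, $\mathcal{M}\bU$, $\mathcal{N}\bU$, $(\mathcal{G}-\nabla)P$, $G$, $H$ as perturbations with small Lipschitz constant, the Prodi--Serrin condition providing smallness of the convective coefficient via absolute continuity of $t\mapsto\|\widetilde{\bU}\|_{L^r(0,t;L^s)}$, and then a bootstrap in $p$.

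Two small technical points where the paper proceeds slightly differently. First, to get rid of the initial datum the paper does not select a good time $t_0$ but instead multiplies the unknown by $t$, setting $(\bU^*,P^*,\bA^*,\bOmega^*)=(t\bU,tP,t\bA,t\bOmega)$ and working with zero initial data throughout; this is cleaner for the bootstrap, since at higher $p$ the trace space $B^{2-2/p}_{p,p}$ is not contained in $H^2$, so ``$\bU(t_0)\in H^2$ for a.e.\ $t_0$'' would not suffice. Second, for the convective estimate the paper interpolates \emph{only within the maximal--regularity space} $X_{2,2}^T$ (Lemma~\ref{estimates_2}: $X_{2,2}^T\hookrightarrow L^{r'}(0,T;W^{1,s'})$ with $\tfrac1r+\tfrac1{r'}=\tfrac12$, $\tfrac1s+\tfrac1{s'}=\tfrac12$) and gets $\|\mathcal{N}\widehat{\bU}\|_{L^2L^2}\le C\|\widetilde{\bU}\|_{L^rL^s}\|\widehat{\bU}\|_{X_{2,2}^T}$; since the map is affine there is no ``exponent strictly below one'' mechanism, and the energy bound $\bU\in L^2(V(0))\cap L^\infty(L^2)$ plays no role in the contraction---smallness comes solely from taking $T_0$ small so that $\|\widetilde{\bU}\|_{L^r(0,T_0;L^s)}$ is small.
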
 

We relegate the proof to Section \ref{Section:StrongSolution}. Next, we state two Propositions which provide higher regularity of the solution and thus finish the proof of Theorem \ref{mainResult}. The proofs of these Propositions are given in Sections \ref{Section:time_derivatives} and \ref{Section:spatial_derivatives}.

\begin{proposition} \label{time_derivatives}
	Let $\widetilde{\bU}$ be a weak solution satisfying the assumption of Lemma \ref{strong} and 
{$\frac{\d}{\dt}\widetilde{\bA}, \frac{\d}{\dt}\widetilde{\bOmega}\in L^{\infty}(0,T)$}. Then
	\begin{align*}
&\partial_t^l\bU, \partial_t^l\widetilde{\bU}\in L^{p}(\varepsilon,T;W^{2,p}(\Omega_F))\cap
W^{1,p}(\varepsilon,T;L^{p}(\Omega_F)),
\\
&\partial_t^l P, \partial_t^l\widetilde{P}\in L^{p}(\varepsilon,T;\quotient{W^{1,p}(\Omega_F)}{\R}),
\\
&\frac{\d^{l}}{\dt^{l}}\bA,\frac{\d^{l}}{\dt^{l}}\bOmega, 
\frac{\d^{l}}{\dt^{l}}\widetilde{\bA}, \frac{\d^{l}}{\dt^{l}}\widetilde{\bOmega}\in
W^{1,p}(\varepsilon,T),
\end{align*}
for all $\varepsilon>0, l\geq 0$ and for all $1\leq p<\infty$.
\end{proposition}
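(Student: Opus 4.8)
The plan is to argue by induction on $l$, the base case $l=0$ being precisely Proposition~\ref{strong}. Suppose the asserted regularity is already known for all time derivatives of order at most $l$, and denote by $\mathcal{S}_l$ the linear fluid--rigid body system solved by $(\partial_t^l\bU,\partial_t^l P,\frac{\d^l}{\dt^l}\bA,\frac{\d^l}{\dt^l}\bOmega)$, so that $\mathcal{S}_0$ is \eqref{FixedDomain_2} itself. Differentiating $\mathcal{S}_l$ once more in time produces, at least formally, a system $\mathcal{S}_{l+1}$ of exactly the same structure: its principal part is again the fluid--rigid body Stokes operator, with the interface condition retaining the rigid form $\bU=\bOmega\times\by+\bA$ now for the differentiated unknowns, and the only new ingredients are the forcing terms generated by the Leibniz rule when the extra time derivative falls on the variable coefficients of $\mathcal{L},\mathcal{M},\mathcal{N},\mathcal{G}$ and on $G,H$. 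These forcings pair time derivatives of the metric quantities $g^{jk},\Gamma^i_{jk},\dot{\bX},\dot{\bY}$ with the unknowns of $\mathcal{S}_k$ for $k\le l$ and with time derivatives of $\widetilde{\bU}$ of order $\le l$.

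The first task is then to solve $\mathcal{S}_{l+1}$ with the claimed regularity. For this I would repeat the argument of Proposition~\ref{strong}---maximal $L^p$ regularity for the fluid--rigid body Stokes operator together with a fixed point---since $\mathcal{S}_{l+1}$ differs from \eqref{FixedDomain_2} only in its right-hand side. That right-hand side lies in the required $L^p$ space by the inductive hypothesis, which controls all the lower-order time derivatives appearing in the forcing, provided the coefficient derivatives are themselves controlled. This is exactly where the extra hypothesis enters: one further time differentiation of the coefficients requires $\bX$, and hence $\widetilde{\bA},\widetilde{\bOmega}$, to carry one more time derivative. For the first step ($l=0$) this is the assumption $\frac{\d}{\dt}\widetilde{\bA},\frac{\d}{\dt}\widetilde{\bOmega}\in L^{\infty}(0,T)$; for $l\ge 1$ the needed time regularity of $\widetilde{\bA},\widetilde{\bOmega}$ has already been produced at the previous induction step, where one obtained $\frac{\d^l}{\dt^l}\widetilde{\bA},\frac{\d^l}{\dt^l}\widetilde{\bOmega}\in W^{1,p}$, so the argument bootstraps.

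The genuinely delicate point---and the main obstacle---is to show that the solution of $\mathcal{S}_{l+1}$ is actually $\partial_t^{l+1}\bU$, and not merely some function satisfying the differentiated equations. Because $(\bU,P,\bA,\bOmega)$ is produced by a fixed point procedure we have no a priori estimate on its time derivatives and cannot differentiate directly. The remedy is the time-translation (difference quotient) method: for small $h>0$ set $D_h\bU(t):=\big(\bU(t+h)-\bU(t)\big)/h$, and likewise for the remaining unknowns and for $\widetilde{\bU}$, and observe that $D_h(\partial_t^l\bU)$ solves a system with the same principal operator as $\mathcal{S}_l$ but with an additional forcing built from the $h$-difference quotients of the coefficients and data. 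Under the acceleration bound these coefficient difference quotients are bounded uniformly in $h$ and converge, as $h\to 0$, to the time derivatives appearing in $\mathcal{S}_{l+1}$; feeding this into the maximal-regularity estimate yields bounds on $D_h(\partial_t^l\bU)$ that are uniform in $h$ on each interval $(\varepsilon,T)$. Passing to the limit $h\to 0$ identifies the limit simultaneously as $\partial_t^{l+1}\bU$ and as a solution of $\mathcal{S}_{l+1}$, and uniqueness for this linear system---again from the maximal-regularity/fixed-point argument---forces the two to coincide, advancing the induction. Finally, the statement for the tilde quantities is immediate, since $(\widetilde{\bU},\widetilde{P},\widetilde{\bA},\widetilde{\bOmega})=(\bU,P,\bA,\bOmega)$ by Proposition~\ref{strong}.
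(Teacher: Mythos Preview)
Your inductive scheme matches the paper's: both differentiate the transformed system, solve the resulting problem via Proposition~\ref{strong2}, and then identify the solution with the actual time derivative. The genuinely different choice is your identification argument. The paper does \emph{not} use difference quotients. Instead, it multiplies the fluid equation by $\mathbb{G}=\nabla\bX^{T}\nabla\bX$ so that the transformed pressure term $\mathcal{G}P$ collapses to $\nabla P$ and can be eliminated by divergence-free test functions; it then builds Galerkin approximations $(\bU^{m},\bA^{m},\bOmega^{m})$ of the \emph{original} linear problem, differentiates the finite-dimensional system in time, and derives energy bounds on $\partial_{t}\bU^{m}$ (this is precisely where the $L^{\infty}$ bound on $\frac{\d}{\dt}\widetilde{\bA},\frac{\d}{\dt}\widetilde{\bOmega}$ is used, via the term $\partial_{t}(\mathbb{G}\mathcal{M})(\bU^{m})$). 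Finally it compares $t\partial_{t}\bU^{m}$ with the fixed-point solution $\bU_{1}^{*}$ and shows they agree in the limit. A key algebraic ingredient is the cancellation $\mathbb{G}\mathcal{G}_{1}P+\partial_{t}\mathbb{G}\,\mathcal{G}P=0$, which disposes of $\partial_{t}P$ without any time regularity on the pressure.

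Your difference-quotient route is plausible and arguably more direct, but two points you gloss over need work. First, $D_{h}\bU$ solves a system whose variable coefficients are evaluated at $t+h$ rather than $t$, and whose initial trace at $t=\varepsilon$ is nonzero; you need a time cutoff (as the paper does via the factor $t$) and must check that the estimates behind Proposition~\ref{strong2} hold uniformly in $h$ for the shifted coefficients. Second, ``feeding into the maximal-regularity estimate'' is not enough: Theorem~\ref{regularity_fixedPoint} alone does not bound $D_{h}\bU$, because the right-hand side still contains $F(D_{h}\bU,D_{h}P)$. You must rerun the full fixed-point argument of Proposition~\ref{strong2} uniformly in $h$ and invoke its \emph{uniqueness} to identify the (cut-off) $D_{h}\bU$ with the fixed point before extracting a bound. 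With these details supplied your argument should go through; the paper's Galerkin/$\mathbb{G}$ approach trades this bookkeeping for the pressure-cancellation trick.
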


\begin{proposition}\label{spatial_derivatives}
	Let $\widetilde{\bU}$ be a weak solution satisfying the assumption of Proposition \ref{time_derivatives}. Then
	$$
	\partial_t^l\widetilde{\bU}\in L^2(\varepsilon,T;H^{k}(\Omega_F)),\, \partial_t^l\widetilde{P}\in L^2(\varepsilon,T;\quotient{H^{k-1}(\Omega_F)}{\R}),\quad \forall l\geq 0, k\geq 2.
	$$
\end{proposition}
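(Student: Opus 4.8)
The plan is to prove the statement by a bootstrap on spatial regularity, using the \emph{stationary} version of the transformed Stokes system \eqref{FixedDomain_2} at almost every fixed time and feeding in the time-regularity already granted by Proposition \ref{time_derivatives}. Recall that by Lemma \ref{uniqueness_1} and Proposition \ref{strong} we have $(\bU,P)=(\widetilde\bU,\widetilde P)$, so it suffices to prove the estimates for $(\bU,P)$. Writing the momentum equation with the \emph{full} transformed second-order operator as the principal part, $(\bU,P)$ solves, for a.e.\ $t$, the generalized stationary Stokes problem
\begin{equation*}
-\mathcal{L}\bU+\mathcal{G}P=-\partial_t\bU-\mathcal{M}\bU-\mathcal{N}\bU,\qquad \divg\bU=0\ \text{ in }\Omega_F,
\end{equation*}
together with $\bU=\bOmega\times\by+\bA$ on $\partial S_0$ and $\bU=0$ on $\partial\Omega$. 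Since $\bX$ is a volume-preserving diffeomorphism, the metric $g^{jk}$ is positive definite and $\mathcal{L}$ is uniformly elliptic, so $(-\mathcal{L},\mathcal{G},\divg)$ is a Stokes system with smooth spatial coefficients on the fixed smooth domain $\Omega_F$. Before starting the bootstrap I would upgrade the time-regularity of the coefficients: by Proposition \ref{time_derivatives} every time derivative of $\widetilde{\bA},\widetilde{\bOmega}$ lies in $W^{1,p}(\varepsilon,T)\subset C^0$, hence $\widetilde{\bA},\widetilde{\bOmega}$, and therefore $\bX,\bY$ (built from $\widetilde{\bq},\Q$, the solutions of the smooth ODEs driven by $\widetilde{\ba},\widetilde{\bomega}$), are $C^\infty$ in time on $(\varepsilon,T]$; consequently the coefficients of $\mathcal{L},\mathcal{M},\mathcal{N},\mathcal{G}$ and all their time derivatives are bounded in $C^\infty(\overline{\Omega})$.

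I would then run a double induction: an outer induction on $k$, and for each $k$ an inner induction on the time-derivative order $l$. The base case $k=2$ is exactly Proposition \ref{time_derivatives} with $p=2$, which gives $\partial_t^l\widetilde\bU\in L^2(\varepsilon,T;H^2)$ and $\partial_t^l\widetilde P\in L^2(\varepsilon,T;\quotient{H^1}{\R})$ for all $l$. For the inductive step, assume the outer hypothesis that $\partial_t^{l'}\bU\in L^2(\varepsilon,T;H^k)$ and $\partial_t^{l'}P\in L^2(\varepsilon,T;\quotient{H^{k-1}}{\R})$ for all $l'$, and prove the corresponding $H^{k+1}$/$H^k$ statement. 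Differentiating the stationary system $l$ times in time, $\bW:=\partial_t^l\bU$ and $\Pi:=\partial_t^l P$ satisfy
\begin{equation*}
-\mathcal{L}\bW+\mathcal{G}\Pi=\mathbf{R}_l,\qquad \divg\bW=0,
\end{equation*}
where $\mathbf{R}_l=-\partial_t^{l+1}\bU-\partial_t^l(\mathcal{M}\bU)-\partial_t^l(\mathcal{N}\bU)+C_l^{\mathcal{L}}-C_l^{\mathcal{G}}$, and $C_l^{\mathcal{L}}=-\sum_{m=1}^l\binom{l}{m}(\partial_t^m\mathcal{L})\partial_t^{l-m}\bU$ (and analogously $C_l^{\mathcal{G}}$) are the commutators produced by the time-dependent coefficients. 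The boundary data $\bW=\partial_t^l\bOmega\times\by+\partial_t^l\bA$ is polynomial in $\by$ with $C^\infty$-in-time coefficients, hence harmless at every spatial order.

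The heart of the argument is the estimate $\mathbf{R}_l\in L^2(\varepsilon,T;H^{k-1})$. The terms $\partial_t^{l+1}\bU$ (order zero in space) and $\partial_t^l(\mathcal{M}\bU)$ (order one) lie in $L^2(H^k)\subset L^2(H^{k-1})$ by the outer hypothesis and the boundedness of the coefficients; the convective term $\partial_t^l(\mathcal{N}\bU)$ is handled by the Leibniz rule together with the multiplication property of $H^k$ for $k\ge 2$ in three dimensions (products $H^k\cdot H^{k-1}\subset H^{k-1}$), again landing in $L^2(H^{k-1})$. The delicate terms are the commutators $C_l^{\mathcal{L}},C_l^{\mathcal{G}}$: being second order in space applied to $\partial_t^{l-m}\bU$, the outer hypothesis alone places them only in $L^2(H^{k-2})$, one derivative short. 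This is precisely where the inner induction enters: since every summand carries $m\ge 1$, it involves only the strictly lower time orders $\partial_t^{l-m}\bU$ with $l-m<l$, for which the inner hypothesis already supplies the desired $L^2(H^{k+1})$ regularity at the \emph{current} spatial level; hence $(\partial_t^m\mathcal{L})\partial_t^{l-m}\bU\in L^2(H^{k-1})$ and $\mathbf{R}_l\in L^2(\varepsilon,T;H^{k-1})$.

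To close the step I would apply the stationary Stokes elliptic estimate for $(-\mathcal{L},\mathcal{G},\divg)$ on $\Omega_F$, namely $\|\bW\|_{H^{k+1}}+\|\Pi\|_{\quotient{H^k}{\R}}\lesssim \|\mathbf{R}_l\|_{H^{k-1}}+\|\text{boundary data}\|_{H^{k+1/2}(\partial\Omega_F)}+\|\bW\|_{L^2}$, pointwise in $t$, and then integrate in time; the last, lower-order term is controlled by the outer hypothesis $\bW\in L^2(H^k)$, yielding $\partial_t^l\bU\in L^2(\varepsilon,T;H^{k+1})$ and $\partial_t^l P\in L^2(\varepsilon,T;\quotient{H^k}{\R})$. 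This advances the inner induction in $l$ and, once all $l$ are covered, the outer induction in $k$, giving the claim for all $k\ge2$ and $l\ge0$. The hard part will be the bookkeeping and control of the second-order time-commutators $C_l^{\mathcal{L}}$: the whole scheme hinges on the observation that ordering the induction by time-derivative order lets one borrow the just-established higher spatial regularity at lower time orders to absorb exactly those commutators, which a single naive induction on $k$ cannot do.
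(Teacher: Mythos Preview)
Your proposal is correct and follows the same overall architecture as the paper's proof in Section~\ref{Section:spatial_derivatives}: a double induction on the spatial order $k$ and, inside each $k$, on the time-derivative order $l$, where the key observation is that the second-order commutator terms arising from time-differentiation only involve strictly lower time orders and so are already controlled at spatial level $H^{k+1}$ by the inner hypothesis. The paper organises the induction identically (see the passage ``Then for $l\geq 1$ if we assume that $(\partial_r\bU,\partial_r P)(t)\in H^{k+1}\times H^k$ for all $r\leq l-1$ \dots'').

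The one genuine technical difference is the choice of principal part. The paper keeps the \emph{constant-coefficient} Stokes operator $(-\Delta,\nabla,\divg)$ on the left (system~\eqref{FixedDomain_5}--\eqref{FixedDomain_7}) and throws $(\mathcal{L}-\Delta)\partial_t^l\bU$ and $(\mathcal{G}-\nabla)\partial_t^l P$ onto the right-hand side $F(\widehat\bU,\widehat P)$, then appeals to Lemma~\ref{Stokes} together with a fixed-point argument to close. This fixed point implicitly relies on the smallness of $g^{jk}-\delta_{jk}$ (Lemma~\ref{estimates_1}) on short time intervals. You instead take the full variable-coefficient operator $(-\mathcal{L},\mathcal{G},\divg)$ as the principal part and invoke elliptic regularity for the generalised Stokes system directly, which is legitimate since the system is obtained from the physical Stokes problem by a smooth volume-preserving diffeomorphism and therefore inherits ADN ellipticity with the complementing condition. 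Your route trades the paper's fixed-point step for a slightly heavier black box (variable-coefficient Stokes regularity rather than Lemma~\ref{Stokes}), but avoids any smallness or interval-partition argument; either way the inductive bookkeeping is the same.
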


\section{Strong Solution}
\label{Section:StrongSolution}
This Section deals with   the proof of Proposition \ref{strong}.
Since $\bX,\bY\in W^{1,\infty}(0,T;C_c^{\infty}(\Omega))$,
the transformation \eqref{ChangeOfVariables} preserves integrability of functions, so we have
$$
(\widetilde{\bU},\widetilde{\bA},\widetilde{\bOmega})\in  (L^{2}(0,T;V(0))\cap L^{\infty
}(0,T;L^2(\Omega)))\times L^{\infty}(0,T)\times L^{\infty}(0,T)
$$
and $\widetilde{\bU}$ satisfies Prodi-Serrin condition. Since we are interested in the regularity excluding $t=0$ we multiply \eqref{FixedDomain_2} by $t$ and define
\begin{equation*}
(\bU^{*}, P^{*},\bA^{*},\bOmega^{*})= (t\bU, tP,t\bA,t\bOmega),
\end{equation*}
which satisfy the following problem on a cylindrical domain with vanishing initial conditions
\begin{equation}	\label{FixedDomain_stokes}
	\begin{array}{l}
	\left.
	\begin{array}{l}
	\partial_{t}{\bU}^{*}-\triangle {\bU}^{*}+\nabla {P}^{*} = F^{*}, \\
	\divg{\bU}^{*} = 0
	\end{array}
	\right\} \;\mathrm{in}\;(0,T)\times\Omega_{F},
	\\
	\left.
	\begin{array}{l}
	\frac{\d}{\dt}{\bA}^{*}
	=-\int_{\partial S_{0}}\mathcal{T}({\bU}^{*},{P}^{*})\bN\,\d\gamma(\by)
	+ G^{*}
	\\
	\frac{\d}{\dt}(\mathcal{J}{\bOmega}^{*})
	=
	-\int_{\partial S_{0}}
	\by\times {\mathcal{T}}({\bU}^{*},{P}^{*})\bN \,\d\gamma(\by)
	+ H^{*}
	\end{array}
	\right\} \;\mathrm{in}\;(0,T),
	\\
	{\bU}^{*} = {\bA}^{*} + {\bOmega}^{*}\times\by
	\qquad \mathrm{on}\;(0,T)\times\partial S_0,
	\\
	{\bU}^{*} = 0\qquad \mathrm{on}\;(0,T)\times\partial\Omega,
	\\
	{\bU}^{*}(0,.)=0\qquad \mathrm{in}\;\Omega,\quad {\bA}^{*}(0)=0,\quad
	{\bOmega}^{*}(0)=0,
	\end{array}
\end{equation}
where
\begin{equation} \label{right1}
F^{*} =  F(\bU^{*},P^{*}) +\bU,\quad
G^{*} = {G}(\bA^{*}) + {\bA},\quad
H^{*} = {H}(\bOmega^{*}) + \mathcal{J}{\bOmega}.
\end{equation}

It is sufficient to show that there is a unique strong solution to the problem
\eqref{FixedDomain_stokes}-\eqref{right1}, with
\begin{equation}	\label{integrability}
(\bU,\bA,\bOmega)\in (L^{2}(0,T;V(0))\cap L^{\infty
}(0,T;L^2(\Omega_F)))
\times L^{2}(0,T)\times L^{2}(0,T)
\end{equation}
are given functions.
Then the problem \eqref{FixedDomain_2} also has a unique strong solution on the interval $(\varepsilon,T)$, for all  $\varepsilon>0$. Finally, by change of variables, i.e. returning to the physical domain, follows that $(\widetilde{\bu},\widetilde{\ba},\widetilde{\bomega})$ is the strong solution of \eqref{FSINoslip}.

First we prove the following result:
\begin{proposition}	\label{strong2}
	Let 
	\begin{equation*} \label{right_side}
	F^{*} =  F(\bU^{*},P^{*}) + \mathcal{R},
	\quad
	G^{*} = {G}(\bA^{*}) + \mathcal{R}_{\ba},
	\quad
	H^{*} = H(\bOmega^{*}) + \mathcal{R}_{\bomega},
	\end{equation*}
	where operators $F$, $G$ and $H$ are defined by \eqref{rightF}-\eqref{rihgtGH}.
	\begin{itemize}
	\item[a)] Let $$
	(\widetilde{\bU},\widetilde{\bA},\widetilde{\bOmega})\in  (L^{2}(0,T;V(0))\cap L^{\infty
	}(0,T;L^2(\Omega)))\times L^{\infty}(0,T)\times L^{\infty}(0,T)
	$$ 
	satisfy Prodi-Serrin condition \eqref{LrLs} and
	\begin{equation}	\label{R1}
	\mathcal{R}\in L^2(0,T;L^2(\Omega_F)),\quad
	\mathcal{R}_{\ba},\mathcal{R}_{\bomega}\in{L^2(0,T)}.
	\end{equation}
	Then there is a unique solution $(\bU^{*},P^{*},\bA^{*},\bOmega^{*})$ for \eqref{FixedDomain_stokes} satisfying
	\begin{equation*}
	\begin{split}
	&\bU^{*}\in H^1(0,T;L^2(\Omega_F))\cap L^2(0,T;H^{2}(\Omega_F)),
	\\
	&P^{*}\in L^2(0,T;\quotient{H^{1}(\Omega_F)}{\R}),
	\\
	&\bA^{*},\bOmega^{*}\in H^{1}(0,T).
	\end{split}
	\end{equation*}
	\item[b)] If in addition we assume  
	$$
	(\widetilde{\bU},\widetilde{\bA},\widetilde{\bOmega})\in  (L^{2m}(0,T;W^{2,2m}(\Omega_F))\cap W^{1,2m}(0,T;L^{2m}(\Omega_F)))\times W^{1,2m}(0,T)\times W^{1,2m}(0,T)
	$$ 
	and
	\begin{equation}	\label{R2}
	\mathcal{R}\in L^{2m}(0,T;L^{2m}(\Omega_F)),\quad
	\mathcal{R}_{\ba},\mathcal{R}_{\bomega}\in{L^{2m}(0,T)},
	\end{equation}
	for $m\in\N$.
	Then
	\begin{equation*}
	\begin{split}
	&\bU^{*}\in W^{1,2m}(0,T;L^{2m}(\Omega_F))\cap L^{2m}(0,T;W^{2,2m}(\Omega_F)),
	\\
	&P^{*}\in L^{2m}(0,T;\quotient{W^{1,2m}(\Omega_F)}{\R}),
	\\
	&\bA^{*},\bOmega^{*}\in W^{1,2m}(0,T).
	\end{split}
	\end{equation*}
	\end{itemize}
\end{proposition}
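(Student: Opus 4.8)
The plan is to establish Proposition \ref{strong2} by a fixed-point argument built on the maximal regularity theory for the linear Stokes fluid--rigid-body operator. The key observation is that the right-hand sides $F^*$, $G^*$, $H^*$ depend on the unknown $(\bU^*,P^*,\bA^*,\bOmega^*)$ only through the lower-order operators $F=(\mathcal{L}-\triangle)\bU^*-\mathcal{M}\bU^*-\mathcal{N}\bU^*-(\mathcal{G}-\nabla)P^*$, $G=-\widetilde{\bOmega}\times\bA^*$, and $H=-\widetilde{\bOmega}\times(\mathcal{J}\bOmega^*)$, each of which is of strictly lower order (in the spatial derivatives) than the leading Stokes part $\partial_t\bU^*-\triangle\bU^*+\nabla P^*$ and the leading ODE parts $\tfrac{\d}{\dt}\bA^*$, $\tfrac{\d}{\dt}(\mathcal{J}\bOmega^*)$. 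First I would fix a bounded linear solution operator: for data $(\mathcal{F},\mathcal{G}_a,\mathcal{G}_\omega)$ in the appropriate $L^q$ spaces with vanishing initial data, the \emph{pure} linearized Stokes--rigid-body system (i.e. without the $F,G,H$ terms, only the convection $\widetilde{\bU}\cdot\nabla\bU^*$ retained on the leading side, or even moved entirely to the right) admits a unique solution in the maximal regularity class $W^{1,q}(0,T;L^q)\cap L^q(0,T;W^{2,q})$ for velocity and $W^{1,q}(0,T)$ for $\bA^*,\bOmega^*$. This is the maximal regularity result for the fluid--rigid body operator cited from \cite{GGH13,MaityTucsnak18,Takahashi03,TakahashiTucsnak04}; I would invoke it as a black box.

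The core of the argument is then to set up the map $\Phi:(\bU^*,P^*,\bA^*,\bOmega^*)\mapsto(\bV,Q,\bB,\bGamma)$ that solves the pure Stokes--rigid-body system with right-hand side assembled from $F(\bU^*,P^*)+\mathcal{R}$, $G(\bA^*)+\mathcal{R}_a$, $H(\bOmega^*)+\mathcal{R}_\omega$, and to show $\Phi$ is a contraction on a suitable ball of the maximal regularity space, at least on a short time interval $[0,T_0]$. For part a) ($q=2$) the estimates are the delicate part: the transformed operators $\mathcal{L}-\triangle$, $\mathcal{M}$, $\mathcal{G}-\nabla$ have coefficients built from $\bX,\bY$ and their derivatives, which by construction lie in $W^{1,\infty}(0,T;C_c^\infty(\Omega))$ and hence are uniformly bounded; crucially, $\mathcal{L}-\triangle$ and $\mathcal{G}-\nabla$ carry a factor that is small when the transformation is close to the identity, which one arranges by shrinking $T_0$ (exploiting that $\bX(0)=\mathrm{id}$). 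The genuinely hard term is the convection $\mathcal{N}\bU^*$, which contains $(\widetilde{\bU}\cdot\nabla)\bU^*$. To control $\|(\widetilde{\bU}\cdot\nabla)\bU^*\|_{L^2(0,T;L^2)}$ in terms of the maximal regularity norm of $\bU^*$ \emph{with a small or absorbable constant}, one uses exactly the Prodi--Serrin hypothesis $\widetilde{\bU}\in L^r(0,T;L^s)$ with $3/s+2/r=1$: by H\"older in space, $\|\widetilde{\bU}\cdot\nabla\bU^*\|_{L^2_x}\le\|\widetilde{\bU}\|_{L^s_x}\|\nabla\bU^*\|_{L^{2s/(s-2)}_x}$, and then interpolation $\|\nabla\bU^*\|_{L^{2s/(s-2)}}\lesssim\|\bU^*\|_{W^{2,2}}^{3/s}\|\nabla\bU^*\|_{L^2}^{1-3/s}$ combined with H\"older in time against $\|\widetilde{\bU}\|_{L^r_t L^s_x}$ yields a bound by $\varepsilon\|\bU^*\|_{L^2(W^{2,2})}+C_\varepsilon(\cdot)$; the time-integrability exponent matches precisely because $3/s+2/r=1$. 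This is the step where I expect the main obstacle to lie, and it is where the criticality of the Prodi--Serrin exponents is used in an essential way.

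For part b) I would bootstrap: assuming $\widetilde{\bU}\in L^{2m}(0,T;W^{2,2m})\cap W^{1,2m}(0,T;L^{2m})$ and the data $\mathcal{R},\mathcal{R}_a,\mathcal{R}_\omega$ in the corresponding $L^{2m}$ spaces, I would run the same contraction but in the maximal regularity space with $q=2m$. The lower-order coefficient operators remain bounded in the stronger norms because $\bX,\bY$ are smooth in space; the convection term $\widetilde{\bU}\cdot\nabla\bU^*$ is now estimated using the higher integrability of both $\widetilde{\bU}$ (which by Sobolev embedding $W^{2,2m}\hookrightarrow W^{1,\infty}$ for $2m>3$, and into $L^\infty$) and $\bU^*$, so that the product lands in $L^{2m}(0,T;L^{2m})$ by H\"older; similarly $\Gamma^i_{jk}\widetilde{\bU}_j\bU^*_k$ is controlled. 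The contraction constant is again made small by shrinking $T_0$, and one covers $[0,T]$ by iterating over finitely many subintervals since the maximal regularity constant is uniform and the solution at the end of each subinterval provides admissible (nonzero, but then reabsorbed by the same multiplication-by-$t$ or translation trick) initial data. Finally, uniqueness in both parts follows from the contraction (or directly from Lemma \ref{uniqueness_1} applied to the difference of two solutions, whose right-hand side is lower order and handled by the same Prodi--Serrin estimate with Gr\"onwall). I would close by noting that the pressure regularity $P^*\in L^q(0,T;W^{1,q}/\R)$ is recovered from the velocity regularity via the stationary Stokes estimate implicit in the maximal regularity package.
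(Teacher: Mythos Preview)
Your proposal is correct and follows essentially the same route as the paper: Banach fixed point on a ball in the maximal regularity space $X^T_{p,q}\times Y^T_{p,q}\times W^{1,p}\times W^{1,p}$, with the linear solution operator supplied by the $L^p$--$L^q$ maximal regularity result of \cite{GGH13} (the paper's Theorem~\ref{regularity_fixedPoint}), smallness of the contraction constant obtained by shrinking $T$, and iteration over subintervals. The only noteworthy technical variation is in the convective estimate: the paper bounds $\|(\widetilde{\bU}\cdot\nabla)\widehat{\bU}\|_{L^2L^2}$ directly via H\"older and the mixed-derivative embedding $X^T_{2,2}\hookrightarrow H^{1/r}(0,T;H^{3/s+1})\hookrightarrow L^{r'}(0,T;W^{1,s'})$ (with $1/r+1/r'=1/2$, $1/s+1/s'=1/2$), obtaining the clean inequality $\|(\widetilde{\bU}\cdot\nabla)\widehat{\bU}\|_{L^2L^2}\le C\|\widetilde{\bU}\|_{L^rL^s}\|\widehat{\bU}\|_{X^T_{2,2}}$ whose prefactor $\|\widetilde{\bU}\|_{L^r(0,T;L^s)}$ is small for small $T$; your Gagliardo--Nirenberg/Young route reaches the same conclusion but is slightly less direct (and you should make explicit that the $C_\varepsilon(\cdot)$ term still carries a factor of $\|\widetilde{\bU}\|_{L^r(0,T;L^s)}$ to some positive power, which is what makes it small).
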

	
Especially, for
\begin{equation*}
\mathcal{R} = \bU,\quad
\mathcal{R}_{\ba} = {\bA},\quad
\mathcal{R}_{\ba} = \mathcal{J}{\bOmega},
\end{equation*}
we obtain the right hand side \eqref{right1} which satisfies \eqref{R1}.

We will prove Proposition \ref{strong2} by using the fixed point theorem and the following maximal regularity result 
\begin{teo}[\cite{GGH13}, Theorem 4.1]	\label{regularity_fixedPoint}
	Let $\bOmega$ be a domain with boundary of class $C^{2,1}$ and $p,q\in (1,\infty)$. Let $F^{*}\in L^{p}(0,T;L^{q}(\Omega_F))$, and $G^{*}, H^{*}\in L^{p}(0,T)$. Then for every $T > 0$, problem \eqref{FixedDomain_stokes} admits a unique solution
	\begin{align*}
	&{\bU}^{*}\in X_{p,q}^T :=W^{1,p}(0,T;L^q(\Omega_{F}))\cap L^p(0,T;W^{2,q}(\Omega_F)),\\
	&{P}^{*}\in Y_{p,q}^T := L^p(0,T;\quotient{{W}^{1,q}(\Omega_{F})}{\R}),\\
	& {\bA}^{*},{\bOmega}^{*}\in W^{1,p}(0,T),
	\end{align*}
	which satisfies the estimate
	\begin{equation*}
	\|{\bU}^{*}\|_{X_{p, q}^{T}}
	+ \|{P}^{*}\|_{Y_{p,q}^{T}}
	+ \|{\bA}^{*}\|_{W^{1,p}\left(0,T\right)}
	+\|{\bOmega}^{*}\|_{W^{1,p}\left(0,T\right)}
	\leq C\left(\|F^{*}\|_{L^p(0,T;L^q(\Omega_F))}
	+\|G^{*}\|_{L^p(0,T)}
	+\|H^{*}\|_{L^p(0,T)}\right),
	\end{equation*}
	where the constant $C$ depends only on the geometry of the rigid body and on $T$.
\end{teo}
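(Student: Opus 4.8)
The statement is a maximal $L^p$--$L^q$ regularity result for the linearized fluid--rigid body operator, so the plan is to recast \eqref{FixedDomain_stokes} as an abstract Cauchy problem and invoke the operator-theoretic characterization of maximal regularity. First I would introduce the solenoidal fluid--structure space
\[
L^q_\sigma(\Omega) = \{\bu\in L^q(\Omega)^3 : \divg\bu = 0 \text{ in }\Omega,\ \bu\cdot\bn = 0 \text{ on }\partial\Omega,\ \D\bu = 0 \text{ in }S_0\},
\]
whose elements automatically carry the rigid degrees of freedom, since $\D\bu=0$ on $S_0$ forces $\bu = \bA + \bOmega\times\by$ there. The associated Helmholtz-type projection $\mathbb{P}_q\colon L^q(\Omega)^3\to L^q_\sigma(\Omega)$ encodes the incompressibility constraint together with the balance of linear and angular momentum from \eqref{FixedDomain_stokes}, so that the surface integrals of the stress need not be carried as separate unknowns. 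On this space I define the fluid--structure operator $\mathbb{A}_q\bu = \mathbb{P}_q\Delta\bu$ with domain consisting of fields that are $W^{2,q}$ in $\Omega_F$, vanish on $\partial\Omega$ and are rigid on $S_0$; the whole system \eqref{FixedDomain_stokes} then reads $\partial_t\bu - \mathbb{A}_q\bu = \mathbb{P}_q\bF$, $\bu(0)=0$, where $\bF$ assembles $F^*$ and the body forcings $G^*,H^*$.

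Because $L^q(\Omega)$ is a UMD space for $1<q<\infty$, the Weis theorem reduces maximal $L^p$-regularity to proving that $\mathbb{A}_q$ is $\mathcal{R}$-sectorial with $\mathcal{R}$-angle strictly less than $\pi/2$. Concretely I would study the resolvent problem $(\lambda - \mathbb{A}_q)\bu = \bF$ for $\lambda$ in a sector $\Sigma_\theta=\{\lambda\in\C\setminus\{0\}:|\arg\lambda|<\theta\}$, which in PDE form is the stationary Stokes system $\lambda\bU - \Delta\bU + \nabla P = F$, $\divg\bU = 0$, coupled to the algebraic relations $\lambda\bA = -\int_{\partial S_0}\mathcal{T}(\bU,P)\bN\,\d\gamma(\by) + G$ and $\lambda\mathcal{J}\bOmega = -\int_{\partial S_0}\by\times\mathcal{T}(\bU,P)\bN\,\d\gamma(\by) + H$, with the matching boundary condition $\bU = \bA + \bOmega\times\by$ on $\partial S_0$. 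The goal is the resolvent estimate $|\lambda|\,\|\bu\|_{L^q} + \|\bu\|_{W^{2,q}} \le C\|\bF\|_{L^q}$ together with its $\mathcal{R}$-bounded version, uniformly on $\Sigma_\theta$.

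The decisive point, and the main obstacle, is the two-way coupling: the rigid velocity enters the Stokes problem as inhomogeneous Dirichlet data, while the fluid stress drives the body ODEs. My strategy would be to treat this coupling as a perturbation of the decoupled pair --- the Dirichlet Stokes operator on $\Omega_F$ (known to be $\mathcal{R}$-sectorial on $C^{2,1}$ domains by the classical results of Giga and Solonnikov) together with the trivial finite-dimensional operator for $(\bA,\bOmega)$. The key structural fact making this work is that the body carries only finitely many degrees of freedom, so the coupling operators are of finite rank and hence lower order relative to $\mathbb{A}_q$. One first solves the resolvent Stokes problem with the lifted rigid boundary data, controls the resulting stress integrals, and then closes the finite-dimensional system for $(\bA,\bOmega)$ for $|\lambda|$ large; the finite-rank coupling is absorbed by a Neumann-series / relatively-bounded-perturbation argument that preserves $\mathcal{R}$-sectoriality. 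Verifying the $\mathcal{R}$-bounds, and not merely uniform norm bounds, for the lifted-data Stokes resolvent and for the trace-to-force maps is where the real technical work lies.

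Once $\mathbb{A}_q$ is shown to be $\mathcal{R}$-sectorial of angle $<\pi/2$, the Weis theorem yields maximal $L^p$-regularity on $(0,T)$ for every finite $T$: the solution with $\bu(0)=0$ satisfies $\partial_t\bu,\mathbb{A}_q\bu\in L^p(0,T;L^q_\sigma(\Omega))$, and recovering the pressure from $\nabla P^* = \Delta\bU^* - \partial_t\bU^* + F^*$ via the de Rham / Bogovskii argument places $P^*$ in $L^p(0,T;\quotient{W^{1,q}(\Omega_F)}{\R})$. The regularity $\bA^*,\bOmega^*\in W^{1,p}(0,T)$ and the stated a priori estimate follow from the same bound, the constant depending on the body geometry and on $T$ through the sector and the maximal-regularity constant, while uniqueness is immediate from resolvent bijectivity. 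I expect the construction of the projection $\mathbb{P}_q$ and the $\mathcal{R}$-bound for the coupled resolvent to be the substantial parts; the analyticity and the reduction via the Weis theorem are, by contrast, routine once $\mathcal{R}$-sectoriality is in hand.
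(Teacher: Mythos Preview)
The paper does not prove this theorem at all: it is quoted verbatim as Theorem~4.1 of \cite{GGH13} and used as a black box in the fixed-point argument for Proposition~\ref{strong2}. So there is no ``paper's own proof'' to compare against.

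That said, your sketch is essentially the strategy of the cited reference \cite{GGH13}. There the authors set up the coupled fluid--structure operator on a solenoidal space incorporating the rigid constraint, establish $\mathcal{R}$-sectoriality of angle less than $\pi/2$ (treating the finite-rank coupling as a lower-order perturbation of the Dirichlet Stokes operator), and invoke the Weis theorem to obtain maximal $L^p$--$L^q$ regularity. Your outline captures the main ideas correctly, including the key structural point that the rigid body contributes only finitely many degrees of freedom. The parts you flag as the real work --- constructing the adapted Helmholtz-type projection and verifying $\mathcal{R}$-bounds for the coupled resolvent --- are indeed where the effort in \cite{GGH13} is concentrated, but for the purposes of the present paper none of this needs to be reproduced.
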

\begin{remark}
	From the proof of the above Theorem it can be seen that the constant $C$ is non-decreasing with respect to $T$.
\end{remark}
For fixed $R>0$, which we will choose later, we define a set
\begin{multline*}
\mathcal{K}_{R} := \big\lbrace\, (\widehat{\bU},\widehat{P},\widehat{\bA},\widehat{\bOmega})\in X_{p,q}^T\times Y_{p,q}^T\times W^{1,p}(0,T)\times W^{1,p}(0,T)
\,:\, \\
\|\widehat{\bU}\|_{X_{p, q}^{T}}
+ \|\widehat{P}\|_{Y_{p,q}^{T}}
+ \|\widehat{\bA}\|_{W^{1,p}\left(0,T\right)}
+\|\widehat{\bOmega}\|_{W^{1,p}\left(0,T\right)}
\leq R
\,\big\rbrace,
\end{multline*}
and a function
$$
\mathcal{S}:\, (\widehat{\bU}, \widehat{P}, \widehat{\bA}, \widehat{\bOmega})\mapsto ({\bU}^{*}, {P}^{*}, {\bA}^{*}, {\bOmega}^{*})
$$
where $({\bU}^{*}, {P}^{*}, {\bA}^{*}, {\bOmega}^{*})$ is solution to the problem \eqref{FixedDomain_stokes} with the right hand side which depends on $(\widehat{\bU}, \widehat{P}, \widehat{\bA}, \widehat{\bOmega})$, i.e. we consider problem \eqref{FixedDomain_stokes} with
\begin{equation}	\label{rightFstar}
F^{*} 
= F(\widehat{\bU},\widehat{P}) + \mathcal{R}
= (\mathcal{L}-\triangle )\widehat{\bU}-\mathcal{M}\widehat{\bU}
-{\mathcal{N}}\widehat{\bU}
-(\mathcal{G}-\nabla )\widehat{P}
+ \mathcal{R}
,
\end{equation}
\begin{equation}	\label{rightGHstar}
	G^{*}
	= G(\widehat{\bA}) + \mathcal{R}_{\ba}
	= -\widetilde{\bOmega}\times \widehat{\bA} + \mathcal{R}_{\ba},\quad
	H^{*} 
	= H(\widehat{\bOmega}) + \mathcal{R}_{\bomega}
	= -\widetilde{\bOmega}\times (\mathcal{J}\widehat{\bOmega}) + \mathcal{R}_{\bomega}.
\end{equation}
We will prove that $S$ is a contraction and will use the Banach's fixed point theorem. More precisely, we will show that: 
\begin{itemize}
	\item $\mathcal{S}$ is well defined on $\mathcal{K}_R$ and $\mathcal{S}(\mathcal{K}_R)\subset \mathcal{K}_R$,
	\item $\mathcal{S}$ is a contraction,
\end{itemize}
which yields a unique fixed point of $\mathcal{S}$, i.e. a unique solution $({\bU}^{*}, {P}^{*}, {\bA}^{*}, {\bOmega}^{*})\in\mathcal{K}_R$ to problem \eqref{FixedDomain_stokes}-\eqref{right1}.

\subsection{Estimates on the right hand side}
In order to use Theorem \ref{regularity_fixedPoint} we have to estimate the right hand side \eqref{rightFstar}-\eqref{rightGHstar}.

First, we state an auxiliary Lemma which directly follows 
form the basic properties of the transformations $\bX$ and $\bY$. Since these estimates follow by direct calculations in the standard way (see e.g. \cite[Section 6.2]{Takahashi03}) we omit the proof.
\begin{lemma}	\label{estimates_1}
	If $\widetilde{\bA},\widetilde{\bOmega}\in W^{l,p}(0,T)$, for $l\geq 0$ and $1< p\leq\infty$, then
	\begin{equation*}
	\Vert \bX\Vert _{W^{l,\infty}(0,T;W^{3,\infty }(\Omega))}
	+\Vert
	\bY\Vert_{W^{l,\infty}(0,T;W^{3,\infty }(\Omega))}
	\leq C,
	\end{equation*}
	\begin{equation*}
	\Vert \bX\Vert _{W^{l+1,p}(0,T;W^{3,\infty }(\Omega))}
	+\Vert\bY\Vert_{W^{l+1,p}(0,T;W^{3,\infty }(\Omega))}
	\leq C,
	\end{equation*}
	\begin{equation*}
	\Vert g_{ij}\Vert_{W^{l,\infty}(0,T;W^{1,\infty }(\Omega))} 
	+\Vert
	g^{ij}\Vert _{W^{l,\infty}(0,T;W^{1,\infty }(\Omega))} 
	+ \Vert\Gamma
	_{ij}^{k}\Vert_{W^{l,\infty}(0,T;L^{\infty }(\Omega))}
	\leq C,
	\end{equation*}
	\begin{equation*}
	\|{g_{j k}}-\delta_{j k}\|_{L^{\infty }(0,T;L^{\infty }(\Omega)}+\|{g^{j k}} -\delta_{j k}\|_{L^{\infty }(0,T;L^{\infty }(\Omega)}
	\leq C\left(T^{\frac{1}{p'}}+T^{\frac{2}{p'}}\right),
	\end{equation*}	
	for all $t\in[0,T]$,	
	where 
	$
	\frac{1}{p}+\frac{1}{p'}=1
	$
	and constant $C$ depends on
	$
	K_p = \|\widetilde{\bA}\|_{W^{l,p}(0,T)}+\|\widetilde{\bOmega}\|_{W^{l,p}(0,T)}
	$
	nondecreasingly.
	
\end{lemma}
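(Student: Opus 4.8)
The plan is to reduce all four estimates to the time regularity of the rigid-body kinematic data $(\ba,\bomega,\bq,\Q)$, exploiting that the Inoue--Wakimoto map is the flow of a vector field which is smooth and compactly supported in space. Recall that $\bX$ is defined by $\partial_t\bX(t,\by)=\Lambda(t,\bX(t,\by))$, $\bX(0,\by)=\by$, where $\Lambda(t,\cdot)\in C_c^\infty(\Omega)$ is the divergence-free extension of the rigid field $\ba(t)+\bomega(t)\times(\bx-\bq(t))$ obtained by cutting off a fixed vector potential (cf. \cite{inoue1977existence}). Thus $\Lambda(t,\cdot)$ depends linearly on $(\ba(t),\bomega(t))$ and smoothly on $(\bq(t),\Q(t))$, its spatial $W^{3,\infty}$-norm is bounded by $C(1+|\ba(t)|+|\bomega(t)|)$, and each time derivative of $\Lambda$ brings down time derivatives of $(\ba,\bomega,\bq,\Q)$ — the worst being those of $(\ba,\bomega)$, since $\bq,\Q$ are one order smoother. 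All spatial smoothness in the statement therefore comes for free from the cutoff, and the whole task is to track time regularity.

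First I would establish the time regularity of $(\ba,\bomega,\bq,\Q)$ from $\widetilde\bA,\widetilde\bOmega\in W^{l,p}(0,T)$. The relations $\ba=\Q\widetilde\bA$, $\bomega=\Q\widetilde\bOmega$, $\dot\bq=\ba$ and $\dot\Q=\P\Q$ (with $\P\bx=\bomega\times\bx$) form a closed ODE system in which $\Q$ is bounded, taking values in $SO(3)$. I would bootstrap on $l$: the base case $l=0$ gives $\bomega=\Q\widetilde\bOmega\in L^p$, hence $\dot\Q\in L^p$ and $\Q\in W^{1,p}(0,T)\hookrightarrow C([0,T])$; the inductive step combines the one-dimensional embedding $W^{1,p}(0,T)\hookrightarrow L^\infty(0,T)$ with the product (Moser) estimate to propagate regularity through the products $\Q\widetilde\bOmega$ and $\P\Q$. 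The outcome is $\ba,\bomega\in W^{l,p}(0,T)$ with derivatives of order $\le l-1$ in $L^\infty$, and $\bq,\Q\in W^{l+1,p}(0,T)$ with derivatives of order $\le l$ in $L^\infty$. This bootstrapping, together with the careful accounting of the gap between the $L^p$ scale (needed for the $W^{l+1,p}$ bound) and the $L^\infty$ scale (needed for the $W^{l,\infty}$ bound), is where I expect the main technical effort to lie.

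Next I would transfer this to the flow. Differentiating the ODE for $\bX$ up to three times in $\by$ and applying Gronwall's inequality gives $\bX(t,\cdot)\in W^{3,\infty}(\Omega)$ uniformly in $t$, with a bound depending only on $\sup_t(|\ba|+|\bomega|)$; the inverse $\bY$, which exists globally since $\det\nabla\bX\equiv1$ (volume preservation), satisfies the same spatial bounds through $\nabla\bY(t,\bX)=(\nabla\bX(t,\cdot))^{-1}$. For time regularity I would differentiate $\partial_t\bX=\Lambda(t,\bX)$ repeatedly in $t$: by the chain and product rules $\partial_t^{l+1}\bX$ contains at worst $\partial_t^l(\ba,\bomega)\in L^p$, giving the $W^{l+1,p}(0,T;W^{3,\infty})$ bound, while $\partial_t^l\bX$ involves only derivatives of $(\ba,\bomega)$ of order $\le l-1$, which lie in $L^\infty$, giving the $W^{l,\infty}(0,T;W^{3,\infty})$ bound (for $l=0$ this bound is immediate since $\bX$ is a flow on a bounded set). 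The inverse $\bY$ is treated identically after differentiating $\bY(t,\bX(t,\by))=\by$. Finally, since $g_{ij}$, $g^{ij}$ and $\Gamma^k_{ij}$ are algebraic expressions in $\nabla\bX$, $\nabla\bY$ and $\nabla^2\bX$ (the latter entering $\Gamma$ via $g_{il,j}$), the third estimate follows from the first together with the fact that $W^{l,\infty}(0,T;W^{1,\infty}(\Omega))$ and $W^{l,\infty}(0,T;L^\infty(\Omega))$ are Banach algebras.

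It remains to prove the smallness estimate, which is the one genuinely different ingredient. Since $\bX(0,\cdot)=\mathrm{id}$, writing $\nabla\bX=\I+E$ and integrating $\partial_t\nabla\bX=\nabla\Lambda(t,\bX)\,\nabla\bX$ in time gives, by H\"older's inequality with exponents $p,p'$,
\begin{equation*}
\|E(t)\|_{L^\infty(\Omega)}=\|\nabla\bX(t,\cdot)-\I\|_{L^\infty(\Omega)}\leq C\,\|\,|\ba|+|\bomega|\,\|_{L^p(0,T)}\,T^{1/p'}\leq C\,K_p\,T^{1/p'}.
\end{equation*}
The quadratic structure $g=\nabla\bX^T\nabla\bX=\I+E+E^T+E^TE$ then yields $\|g-\I\|_{L^\infty}\leq 2\|E\|_{L^\infty}+\|E\|_{L^\infty}^2\leq C(T^{1/p'}+T^{2/p'})$, and the identical argument applied to $\nabla\bY-\I$ controls $g^{jk}-\delta_{jk}$. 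This completes the proof. I regard the mixed space--time bootstrapping of the flow as the crux, while the quadratic algebra of $g$ is the key observation that produces the precise $T^{1/p'}+T^{2/p'}$ rate.
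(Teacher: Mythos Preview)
Your proposal is correct and follows the standard route for these estimates. The paper itself omits the proof entirely, stating that ``these estimates follow by direct calculations in the standard way (see e.g.\ \cite[Section 6.2]{Takahashi03})''; your outline --- bootstrapping the time regularity of $(\ba,\bomega,\bq,\Q)$ from the ODE relations, transferring it to the flow via Gr\"onwall on the differentiated flow equation, reading off $g,g^{-1},\Gamma$ as algebraic combinations, and extracting the $T^{1/p'}+T^{2/p'}$ rate from the quadratic structure $g=\I+E+E^T+E^TE$ --- is precisely that standard calculation.
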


For the convective term we have the following result.
\begin{lemma}	\label{estimates_2}
Assume that 
$
\widehat{\bU}\in X_{2,2}^T
$
and 
$
\widetilde{\bU}\in 
L^{r}(0,T;L^{s}(\Omega_F)),
$
such that $\frac{3}{s}+\frac{2}{r}=1$, $s\in(3,\infty)$.
Then 
\begin{align*}
\|({\widetilde{\bU}}\cdot \nabla )\widehat{\bU}\|_{L^2(0,T;L^2(\Omega_F))}
&
\leq
C\|{\widetilde{\bU}}\|_{L^r(0,T;L^s(\Omega_F))}
\|\widehat{\bU}\|_{X_{2,2}^T}.
\end{align*}
\end{lemma}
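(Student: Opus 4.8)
The plan is to obtain the bound pointwise in time by H\"older's inequality in space, to control the resulting spatial norm of $\nabla\widehat{\bU}$ by a Gagliardo--Nirenberg interpolation, and finally to close everything by a single H\"older inequality in time whose conjugate exponents are forced to match precisely by the Prodi--Serrin relation $\frac{3}{s}+\frac{2}{r}=1$.

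First I would fix, for a.e. $t$, the spatial H\"older exponent $\sigma=\frac{2s}{s-2}$, so that $\frac{1}{2}=\frac{1}{s}+\frac{1}{\sigma}$, and write
\begin{equation*}
\|(\widetilde{\bU}\cdot\nabla)\widehat{\bU}\|_{L^2(\Omega_F)}
\leq \|\widetilde{\bU}\|_{L^s(\Omega_F)}\,\|\nabla\widehat{\bU}\|_{L^\sigma(\Omega_F)}.
\end{equation*}
The heart of the argument is then to show that $\nabla\widehat{\bU}$ lies in the correct mixed space-time Lebesgue space. For this I would use the embedding of the maximal-regularity space $X_{2,2}^T=W^{1,2}(0,T;L^2(\Omega_F))\cap L^2(0,T;W^{2,2}(\Omega_F))$ into $L^\infty(0,T;H^1(\Omega_F))$ (the standard trace/interpolation embedding on the fixed reference domain), together with the obvious $X_{2,2}^T\hookrightarrow L^2(0,T;H^2(\Omega_F))$. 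In three dimensions the Sobolev embedding $H^1\hookrightarrow L^6$ applied to $\nabla\widehat{\bU}$ gives, by Gagliardo--Nirenberg,
\begin{equation*}
\|\nabla\widehat{\bU}\|_{L^\sigma(\Omega_F)}
\leq C\,\|\widehat{\bU}\|_{H^1(\Omega_F)}^{\theta}\,\|\widehat{\bU}\|_{H^2(\Omega_F)}^{1-\theta},
\qquad \theta=1-\frac{3}{s},
\end{equation*}
where $\theta$ is exactly the exponent solving $\frac{1}{\sigma}=\frac{\theta}{2}+\frac{1-\theta}{6}$.

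Next, raising to the power $\rho=\frac{2r}{r-2}$ and integrating in time, I would pull the $L^\infty(0,T;H^1)$ factor out of the integral and use that the remaining power on $\|\widehat{\bU}\|_{H^2}$ is precisely $\rho(1-\theta)=2$; this is the essential bookkeeping check, and it holds because $\frac{1}{\rho}=\frac{1-\theta}{2}=\frac{3}{2s}$. Since $\theta+\frac{2}{\rho}=1$, this yields $\|\nabla\widehat{\bU}\|_{L^\rho(0,T;L^\sigma(\Omega_F))}\leq C\,\|\widehat{\bU}\|_{X_{2,2}^T}$. Finally I would apply H\"older in time to $\int_0^T\|\widetilde{\bU}\|_{L^s}^2\,\|\nabla\widehat{\bU}\|_{L^\sigma}^2\,\d t$ with the conjugate exponents $\frac{r}{2}$ and $\frac{\rho}{2}$; these are genuinely conjugate exactly because $\frac{2}{r}+\frac{2}{\rho}=\big(1-\frac{3}{s}\big)+\frac{3}{s}=1$, which is where the Prodi--Serrin constraint enters a second time. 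Combining the three steps gives the claimed estimate, with $C$ independent of $T$.

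The step I expect to demand the most care is not any individual inequality but the consistent verification that the interpolation exponent $\theta$ and the time exponent $\rho$ produced by the Gagliardo--Nirenberg and H\"older steps are mutually compatible, i.e.\ that $\frac{2}{r}+\frac{2}{\rho}=1$ and $\rho(1-\theta)=2$ hold simultaneously. Both identities reduce to the critical scaling $\frac{3}{s}+\frac{2}{r}=1$, so the proof is essentially a check that this scaling makes all the exponents close exactly. The only genuinely technical point to justify is the embedding $X_{2,2}^T\hookrightarrow L^\infty(0,T;H^1(\Omega_F))$ on the time-independent reference domain $\Omega_F$, which is classical but worth stating explicitly.
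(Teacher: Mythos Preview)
Your proof is correct but follows a different route from the paper's. The paper obtains $\nabla\widehat{\bU}\in L^{r'}(0,T;L^{s'}(\Omega_F))$ (with $\frac{1}{r}+\frac{1}{r'}=\frac{1}{2}$, $\frac{1}{s}+\frac{1}{s'}=\frac{1}{2}$, i.e.\ your $\rho,\sigma$) via a single vector-valued interpolation
\[
X_{2,2}^T\hookrightarrow H^{1/r}\big(0,T;H^{2(1-1/r)}(\Omega_F)\big)=H^{1/r}\big(0,T;H^{1+3/s}(\Omega_F)\big)
\]
followed by the Sobolev embeddings $H^{1/r}(0,T)\hookrightarrow L^{r'}(0,T)$ and $H^{1+3/s}(\Omega_F)\hookrightarrow W^{1,s'}(\Omega_F)$, and then applies H\"older in space and time exactly as you do. You instead combine the trace embedding $X_{2,2}^T\hookrightarrow L^\infty(0,T;H^1)$ with the obvious $L^2(0,T;H^2)$ bound and a spatial Gagliardo--Nirenberg interpolation between $L^2$ and $L^6$ for $\nabla\widehat{\bU}$. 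Both routes land on the same mixed-norm control of $\nabla\widehat{\bU}$, and the Prodi--Serrin scaling enters in the same place. Your argument avoids the fractional-in-time Sobolev space $H^{1/r}(0,T)$ and vector-valued interpolation, at the cost of invoking the trace embedding into $L^\infty_tH^1_x$; the paper's version is more compact but leans on a slightly heavier interpolation statement. One small caveat: the constant in $X_{2,2}^T\hookrightarrow L^\infty(0,T;H^1)$ is in general $T$-dependent for small $T$ unless the initial trace vanishes, so your claim of $T$-independence should be qualified (the paper does not assert $T$-independence here either).
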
 
\begin{proof}
By using the following embeddings
\begin{multline*}
L^2(0,T;H^2(\Omega_F))\cap H^1(0,T;L^2({\Omega_F}))\hookrightarrow H^{\frac{1}{r}}(0,T;H^{2\left(1-\frac{1}{r}\right)}({\Omega_F}))
\\
= H^{\frac{1}{r}}(0,T;H^{\frac{3}{s}+1}({\Omega_F}))
\hookrightarrow
L^{r'}(0,T;W^{1,s'}({\Omega_F}))
\end{multline*}
for 
$
\frac{1}{r} + \frac{1}{r'} = \frac{1}{2},  \frac{1}{s} + \frac{1}{s'} = \frac{1}{2},
$
we conclude that
$
\nabla\widehat{\bU}\in 
L^{r'}(0,T;L^{s'}(\Omega_F)),
$
and therefore we have
\begin{align*}
\|({\widetilde{\bU}}\cdot \nabla )\widehat{\bU}\|_{L^2(0,T;L^2(\Omega_F))}^2
&= \int_{0}^{T}\int_{\Omega_F}
|({\widetilde{\bU}}\cdot \nabla )\widehat{\bU}|^2\,\d\by\d\tau
\leq
\int_{0}^{T}
\|{\widetilde{\bU}}\|_{L^s(\Omega_F)}^2\| \nabla \widehat{\bU}\|_{L^{s'}(\Omega_F)}^2\,\d\tau
\\
&\leq\left(\int_{0}^{T}
\|{\widetilde{\bU}}\|_{L^s(\Omega_F)}^r\,\d\tau\right)^{\frac{2}{r}}
\left(\int_{0}^{T}\| \nabla \widehat{\bU}\|_{L^{s'}(\Omega_F)}^{r'}\,\d\tau\right)^{\frac{2}{r'}}
\\
&=
\|{\widetilde{\bU}}\|_{L^r(0,T;L^s(\Omega_F))}^{2}
\| \nabla \widehat{\bU}\|_{L^{r'}(0,T;L^{s'}(\Omega_F))}^{2}.
\end{align*}
\end{proof}

Now, we can show the following Lemma to estimate the right-hand side given by \eqref{rightFstar}.
\begin{lemma}	\label{estimates_3}
Assume that
$
\widetilde{\bA},\widetilde{\bOmega}\in L^{p}(0,T)
$, for all $1\leq p<\infty$
and 
$
\widetilde{\bU}\in 
L^{r}(0,T;L^{s}(\Omega_F)),
$
such that $\frac{3}{s}+\frac{2}{r}=1$, $s\in(3,\infty)$.
Then for
$\widehat{\bU}\in X_{2,2}^T
, \widehat{P}\in Y_{2,2}^T
$
the following estimates hold:
\begin{align*}
\|\mathcal{N}\widehat{\bU} \|_{L^2(0,T;L^2(\Omega_F))}
&\leq C
\|{\widetilde{\bU}}\|_{L^r(0,T;L^s(\Omega_F))}
\| \widehat{\bU}\|_{X_{2,2}^T},
\\
\|(\mathcal{L}-\triangle )\widehat{\bU}\|
_{L^2(0,T;L^2(\Omega_F))}
&\leq CT^{\frac{1}{4}}\|\widehat{\bU}\|_{X_{2,2}^T},
\\
\|\mathcal{M}\widehat{\bU}\|_{L^2(0,T;L^2(\Omega_F))}
&\leq CT^{\frac{1}{4}}
\|\widehat{\bU}\|_{X_{2,2}^T},
\\
\|(\mathcal{G}-\nabla )\widehat{P}\|_{L^2(0,T;L^2(\Omega_F))}
&\leq CT^{\frac{1}{2}}\|\widehat{P}\|_{Y_{2,2}^T},
\end{align*}
for $T\leq 1$, where constant $C>0$ depends on $T$ nondecreasingly.
\end{lemma}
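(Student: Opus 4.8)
The plan is to treat the four operators term by term, splitting each into a principal part that carries the deviation of the metric from the identity and a collection of lower-order parts whose coefficients are merely bounded. In every case the coefficients are controlled by Lemma \ref{estimates_1}, and the required powers of $T$ are produced either by the smallness estimate $\|g^{jk}-\delta_{jk}\|_{L^\infty(0,T;L^\infty(\Omega))}\le C(T^{1/p'}+T^{2/p'})$ from that lemma, or by a crude H\"older gain in time combined with the trace embedding $X_{2,2}^T\hookrightarrow C([0,T];H^1(\Omega_F))$.

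For the convective operator $\mathcal{N}$ I would use the splitting \eqref{OpN}: the genuinely convective part $(\widetilde{\bU}\cdot\nabla)\widehat{\bU}$ is estimated directly by Lemma \ref{estimates_2}, while the zeroth order Christoffel part $\sum_{j,k}\Gamma^i_{jk}\widetilde{\bU}_j\widehat{\bU}_k$ is handled by H\"older's inequality in space and time, using $\|\Gamma^i_{jk}\|_{L^\infty(0,T;L^\infty)}\le C$ together with the embedding $X_{2,2}^T\hookrightarrow L^{r'}(0,T;W^{1,s'}(\Omega_F))\hookrightarrow L^{r'}(0,T;L^{s'}(\Omega_F))$ established inside the proof of Lemma \ref{estimates_2}; with $\tfrac1s+\tfrac1{s'}=\tfrac1r+\tfrac1{r'}=\tfrac12$ this bounds the term by $C\|\widetilde{\bU}\|_{L^r(0,T;L^s)}\|\widehat{\bU}\|_{X_{2,2}^T}$, with no power of $T$.

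For $(\mathcal{L}-\triangle)\widehat{\bU}$ I would expand \eqref{OpL} and subtract $\triangle\widehat{\bU}_i=\sum_{j,k}\delta_{jk}\partial_j\partial_k\widehat{\bU}_i$. The principal term is $\sum_{j,k}(g^{jk}-\delta_{jk})\partial_j\partial_k\widehat{\bU}_i$, which I bound by $\|g^{jk}-\delta_{jk}\|_{L^\infty(0,T;L^\infty)}\,\|\widehat{\bU}\|_{L^2(0,T;H^2)}$; choosing $p'=4$ (that is $p=\tfrac43$, legitimate since $\widetilde{\bA},\widetilde{\bOmega}\in L^p$ for every finite $p$) the smallness estimate of Lemma \ref{estimates_1} gives exactly the factor $T^{1/4}$. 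The remaining terms of $\mathcal{L}$, namely $(\partial_j g^{jk})\partial_k\widehat{\bU}_i$, $g^{kl}\Gamma^i_{jk}\partial_l\widehat{\bU}_j$ and the zeroth order term (whose coefficient involves $\partial_k\Gamma$, controlled through the $W^{3,\infty}$-bound on $\bX,\bY$ in Lemma \ref{estimates_1}), all have coefficients bounded in $L^\infty(0,T;L^\infty)$; for these I use the elementary bound $\|v\|_{L^2(0,T;L^2)}\le T^{1/2}\|v\|_{L^\infty(0,T;L^2)}$ with $v=\widehat{\bU}$ or $v=\partial_l\widehat{\bU}$ and the trace embedding above, producing a factor $T^{1/2}\le T^{1/4}$ for $T\le1$. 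Thus every contribution is dominated by $CT^{1/4}\|\widehat{\bU}\|_{X_{2,2}^T}$. The operator $\mathcal{M}$ from \eqref{OpM} is treated identically: its coefficients $\dot{\bY}_j$, $\Gamma^i_{jk}\dot{\bY}_k$ and $(\partial_k\bY_i)(\partial_j\dot{\bX}_k)$ are bounded in $L^\infty(0,T;L^\infty)$ by Lemma \ref{estimates_1}, and the same H\"older-in-time argument yields $T^{1/2}\le T^{1/4}$. Finally, for $(\mathcal{G}-\nabla)\widehat{P}$ one reads off from \eqref{OpP} that its $i$-th component is $\sum_j(g^{ij}-\delta_{ij})\partial_j\widehat{P}$, so bounding by $\|g^{ij}-\delta_{ij}\|_{L^\infty(0,T;L^\infty)}\,\|\nabla\widehat{P}\|_{L^2(0,T;L^2)}$ and now taking $p'=2$ in Lemma \ref{estimates_1} gives the factor $T^{1/2}$ and $\|\widehat{P}\|_{Y_{2,2}^T}$ on the right.

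The H\"older and Leibniz bookkeeping is routine; the one point requiring care is the extraction of the correct power of $T$. The main obstacle is the principal second-order term of $\mathcal{L}-\triangle$: it cannot acquire a power of $T$ from a H\"older-in-time gain, since $\partial_j\partial_k\widehat{\bU}$ only lies in $L^2(0,T;L^2)$ with no time-integrability to spare, so one is forced to exploit genuine smallness of $g^{jk}-\delta_{jk}$ in $L^\infty$, and it is precisely the freedom to take $p'=4$—available because $\widetilde{\bA},\widetilde{\bOmega}$ lie in every $L^p$—that yields the stated exponent $\tfrac14$. One should also verify that the embedding constant in $X_{2,2}^T\hookrightarrow C([0,T];H^1(\Omega_F))$ may be taken nondecreasing in $T$ (for instance by extension to a fixed interval), so that the constants in all four estimates depend on $T$ nondecreasingly as claimed.
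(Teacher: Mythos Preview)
Your proof is correct and follows essentially the same strategy as the paper: split each operator into a principal part controlled by $\|g^{jk}-\delta_{jk}\|_{L^\infty L^\infty}$ via Lemma \ref{estimates_1}, and lower-order parts with bounded coefficients that pick up a power of $T$ by H\"older in time, invoking Lemma \ref{estimates_2} for the convective piece of $\mathcal{N}$. The only cosmetic difference is that for the lower-order terms of $\mathcal{L}-\triangle$ and for $\mathcal{M}$ the paper uses the interpolation embedding $X_{2,2}^T\hookrightarrow H^{1/2}(0,T;H^1)\hookrightarrow L^4(0,T;H^1)$ to get $T^{1/4}$ directly, whereas you use the trace embedding $X_{2,2}^T\hookrightarrow C([0,T];H^1)$ to get $T^{1/2}$ and then note $T^{1/2}\le T^{1/4}$ for $T\le 1$; both routes are valid.
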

\begin{proof}
Lemma \ref{estimates_1} and \ref{estimates_2} imply the estimate for the convective term.
\begin{align*}
\|\mathcal{N}\widehat{\bU} \|_{L^2(0,T;L^2(\Omega_F))}^2
&= \int_{0}^{T}\int_{\Omega_F }
\left(
|({\widetilde{\bU}}\cdot \nabla )\widehat{\bU}|^2
+ \sum_{i,j,k=1}^n |\Gamma ^i_{jk} {\widetilde{\bU}_j}\widehat{\bU}_k|^2
\right)\,\d\by\d\tau\\
&\leq 
\|({\widetilde{\bU}}\cdot \nabla )\widehat{\bU}\|_{L^2(0,T;L^2(\Omega_F))}^2
+ 
\left(
\sup_{i,j,k} \|\Gamma ^i_{jk}\|_{\infty,\infty}^2
\right)
\sum_{i,j,k=1}^n\int_{0}^{T}\int_{\Omega_F } 
|{\widetilde{\bU}_j}\widehat{\bU}_k|^2\,\d\by\d\tau
\\
&\leq C(1+T)
\|{\widetilde{\bU}}\|_{L^r(0,T;L^s(\Omega_F))}^{2}
\| \widehat{\bU}\|_{X_{2,2}^T}^{2}.
\end{align*}
The second estimate comes from Lemma \ref{estimates_1} 
\begin{align*}
\|(\mathcal{L}-\triangle )\widehat{\bU}\|&
_{L^2(0,T;L^2(\Omega_F))}
\leq  C \left(\sup _{j, k}\left\|g^{j k}-\delta_{j k}\right\|_{L^{\infty}(0,T;L^{\infty}(\Omega))}\|\Delta \widehat{\bU}\|_{L^2(0,T;L^2(\Omega_F))}\right.
\\
&+\left(\left\|\partial_{j} g^{j k}\right\|_{L^{\infty}(0,T;L^{\infty}(\Omega))}+\left\|g^{k i}\right\|_{L^{\infty}(0,T;L^{\infty}(\Omega))}\left\|\Gamma_{j k}^{i}\right\|_{L^{\infty}(0,T;L^{\infty}(\Omega))}\right)
T^{\frac{1}{4}}\|\nabla \widehat{\bU}\|_{L^4(0,T;L^2(\Omega_F))}
\\
&+\left(\left\|\partial_{k} g^{k l}\right\|_{L^{\infty}(0,T;L^{\infty}(\Omega))}\left\|\Gamma_{k l}^{i}\right\|_{L^{\infty}(0,T;L^{\infty}(\Omega))}+\left\|g^{k l}\right\|_{L^{\infty}(0,T;L^{\infty}(\Omega))}\left\|\partial_{k} \Gamma_{k l}^{i}\right\|_{L^{\infty}(0,T;L^{\infty}(\Omega))}\right.
\\
&\left.\left.+\left\|g^{k l}\right\|_{L^{\infty}(0,T;L^{\infty}(\Omega))}\left\|\Gamma_{j l}^{m}\right\|_{L^{\infty}(0,T;L^{\infty}(\Omega))}\left\|\Gamma_{k m}^{i}\right\|_{L^{\infty}(0,T;L^{\infty}(\Omega))}\right)
T^{\frac{1}{4}}\| \widehat{\bU}\|_{L^4(0,T;L^2(\Omega_F))}
\right) 
\\
&\leq C
\left(
T^{\frac{1}{4}} + T
\right)
\|\widehat{\bU}\|_{X_{2,2}^T},
\end{align*}
\begin{align*}
\|\mathcal{M}\widehat{\bU}\|&_{L^{2}(0,T;L^{2}(\Omega_F))}
=\left\| 
\sum _{j=1}^n \dot{\mathbf{Y}}_j \partial _j
\widehat{\bU}_i 
+ \sum _{j,k=1}^n \Big(\Gamma _{jk}^i \dot{\mathbf{Y}}_k +
(\partial _k \mathbf{Y}_i)(\partial _j \dot{\mathbf{X}}_k)\Big)\widehat{\bU}_j
\right\|_{L^{2}(0,T;L^{2}(\Omega_F))}
\\
&\leq
T^{\frac{1}{4}}\|{\dot{\mathbf{Y}}}\|_{L^{\infty}(0,T;L^{\infty}(\Omega))} \|\nabla\widehat{\bU}\|_{L^{4}(0,T;L^{2}(\Omega_F))}\\
&+ (\sup_{i,j,k}\|\Gamma_{j k}^{i}\|_{L^{\infty}(0,T;L^{\infty}(\Omega))}
\|\dot{\mathbf{Y}}\|_{L^{\infty}(0,T;L^{\infty}(\Omega))}
\\
&\qquad\qquad\qquad
+ \|\nabla{\mathbf{Y}}\|_{L^{\infty}(0,T;L^{\infty}(\Omega))}
\|\nabla\dot{\mathbf{X}}\|_{L^{\infty}(0,T;L^{\infty}(\Omega))}
)
T^{\frac{1}{4}}
\|\widehat{\bU}\|_{L^{4}(0,T;L^{2}(\Omega_F))}
\\
&
\leq
CT^{\frac{1}{4}}
\|\widehat{\bU}\|_{X_{2,2}^T},
\end{align*}
since
$
L^2(0,T;H^2(\Omega_{F}))\cap H^1(0,T;L^2(\Omega_{F}))
\hookrightarrow H^{\frac{1}{2}}(0,T;H^{1}(\Omega_{F}))
\hookrightarrow L^{4}(0,T;H^{1}(\Omega_{F}))
$
and $\widetilde{\bA},\widetilde{\bOmega}\in L^{\infty}(0,T)$.

Finally, for the pressure term we get
\begin{align*}
\|(\mathcal{G}-\nabla )\widehat{P}\|_{L^{2}(0,T;L^{2}(\Omega_F))}
&\leq
C\sup _{ j, k}\left\|g^{j k}-\delta_{j k}\right\|_{L^{\infty}(0,T;L^{\infty}(\Omega_F))}
\|\nabla\widehat{P}\|_{L^{2}(0,T;L^{2}(\Omega_F))}\\
&\leq C\left(T^{\frac{1}{2}}+T\right)\|\nabla\widehat{P}\|_{L^{2}(0,T;L^{2}(\Omega_F))}.
\end{align*}
\end{proof}

\begin{corollary}	\label{estimates_4}
	Assume that
	$
	\widetilde{\bA},\widetilde{\bOmega}\in L^{p}(0,T)$, for all $1\leq p<\infty$
	and 
	$
	\widetilde{\bU}\in X_{2m,2m}^T,
	$
	for some $m\in\N$.
	Then for
	$\widehat{\bU}\in X_{2(m+1),2(m+1)}^T
	, \widehat{P}\in Y_{2(m+1),2(m+1)}^T
	$
	the following estimates hold:
	\begin{align*}
	\|\mathcal{N}\widehat{\bU} \|_{L^{2(m+1)}(0,T;L^{2(m+1)}(\Omega_F))}
	&\leq C
	\|{\widetilde{\bU}}\|_{X_{2m,2m}^T}
	\| \widehat{\bU}\|_{X_{2(m+1),2(m+1)}^T},
	\\
	\|(\mathcal{L}-\triangle )\widehat{\bU}\|
	_{L^{2(m+1)}(0,T;L^{2(m+1)}(\Omega_F))}
	&\leq CT^{\frac{1}{2(m+1)}}
        \|\widehat{\bU}\|_{X_{2(m+1),2(m+1)}^T}
	\\
	\|\mathcal{M}\widehat{\bU}\|_{L^{2(m+1)}(0,T;L^{2(m+1)}(\Omega_F))}
	&\leq C
	T^{\frac{1}{2(m+1)}}
	\|\widehat{\bU}\|_{X_{2(m+1),2(m+1)}^T},
	\\
	\|(\mathcal{G}-\nabla )\widehat{P}\|_{L^{2(m+1)}(0,T;L^{2(m+1)}(\Omega_F))}
	&\leq CT^{\frac{1}{2}}\|\widehat{P}\|_{Y_{2(m+1),2(m+1)}^T},
	\end{align*}
	for $T\leq 1$, where a constant $C>0$ depends on $T$ nondecreasingly.
\end{corollary}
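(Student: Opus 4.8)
The plan is to reproduce the proof of Lemma \ref{estimates_3} line by line, raising every integrability exponent from $2$ to $2(m+1)$ and letting the stronger hypothesis $\widetilde{\bU}\in X_{2m,2m}^T$ play the role formerly played by the Prodi--Serrin condition. The first step is to fix the pointwise bounds on the geometric coefficients. Applying Lemma \ref{estimates_1} (with $p$ chosen freely in $[1,\infty)$, which is legitimate since $\widetilde{\bA},\widetilde{\bOmega}\in L^{p}(0,T)$ for every finite $p$), the quantities $g^{ij}$, $\partial_j g^{ij}$, $\Gamma^{k}_{ij}$, $\dot{\bY}$, $\nabla\bY$ and $\nabla\dot{\bX}$ are bounded in $L^{\infty}(0,T;L^{\infty}(\Omega))$, while the fluctuation $g^{jk}-\delta_{jk}$ satisfies $\|g^{jk}-\delta_{jk}\|_{L^{\infty}(0,T;L^{\infty}(\Omega))}\leq C\,T^{1/2}$ for $T\leq 1$, upon choosing $p=2$ in the last estimate of Lemma \ref{estimates_1}. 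These are exactly the ingredients used before, now available at every exponent.

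For the three linear operators I would bound each summand by (a coefficient measured in $L^{\infty}_{t,x}$) times (a derivative of $\widehat{\bU}$ or $\widehat{P}$ in a mixed norm), just as in Lemma \ref{estimates_3}. The power $T^{1/(2(m+1))}$ is produced by H\"older in time: since $\widehat{\bU}\in X_{2(m+1),2(m+1)}^T$ lies in the trace space $C([0,T];W^{2-1/(m+1),2(m+1)}(\Omega_F))$, both $\widehat{\bU}$ and $\nabla\widehat{\bU}$ belong to $L^{\infty}(0,T;L^{2(m+1)}(\Omega_F))$, whence $\|\nabla\widehat{\bU}\|_{L^{2(m+1)}(0,T;L^{2(m+1)})}\leq T^{1/(2(m+1))}\|\nabla\widehat{\bU}\|_{L^{\infty}(0,T;L^{2(m+1)})}\leq T^{1/(2(m+1))}\|\widehat{\bU}\|_{X_{2(m+1),2(m+1)}^T}$. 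This disposes of the first-order part of $\mathcal{L}-\triangle$ (whose coefficient $\partial_j g^{jk}$ is bounded) and of all of $\mathcal{M}$ (whose coefficients $\dot{\bY},\Gamma,\nabla\bY,\nabla\dot{\bX}$ are bounded). The second-order part $(g^{jk}-\delta_{jk})\partial_j\partial_k\widehat{\bU}$ is instead controlled by $\|g^{jk}-\delta_{jk}\|_{\infty}\,\|\nabla^{2}\widehat{\bU}\|_{L^{2(m+1)}(0,T;L^{2(m+1)})}\leq CT^{1/2}\|\widehat{\bU}\|_{X_{2(m+1),2(m+1)}^T}$, and since $T^{1/2}\leq T^{1/(2(m+1))}$ for $T\leq 1$ and $m\geq 1$ the stated power is the dominant one. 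The pressure operator $(\mathcal{G}-\nabla)\widehat{P}=(g^{jk}-\delta_{jk})\partial_k\widehat{P}$ is estimated directly by $\|g^{jk}-\delta_{jk}\|_{\infty}\,\|\nabla\widehat{P}\|_{L^{2(m+1)}(0,T;L^{2(m+1)})}\leq CT^{1/2}\|\widehat{P}\|_{Y_{2(m+1),2(m+1)}^T}$.

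The only genuinely new ingredient is the convective estimate, i.e.\ the analogue of Lemma \ref{estimates_2} at exponent $2(m+1)$, and this is where I expect the main difficulty. I would split $\mathcal{N}\widehat{\bU}$ into the principal part $(\widetilde{\bU}\cdot\nabla)\widehat{\bU}$ and the zeroth-order piece $\sum_{j,k}\Gamma^{i}_{jk}\widetilde{\bU}_j\widehat{\bU}_k$; the latter is immediate from the $L^{\infty}_{t,x}$ bound on $\Gamma$ together with H\"older, using that both $\widetilde{\bU}$ and $\widehat{\bU}$ lie in $L^{\infty}(0,T;L^{2(m+1)})$. For the principal part I would invoke the parabolic embedding of $X_{2m,2m}^T$. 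When $m\geq 2$ one has $2m>5/2$, so $X_{2m,2m}^T\hookrightarrow L^{\infty}((0,T)\times\Omega_F)$ and the bound is trivial, $\|(\widetilde{\bU}\cdot\nabla)\widehat{\bU}\|_{L^{2(m+1)}(0,T;L^{2(m+1)})}\leq\|\widetilde{\bU}\|_{L^{\infty}_{t,x}}\|\nabla\widehat{\bU}\|_{L^{2(m+1)}(0,T;L^{2(m+1)})}$. The borderline case $m=1$ is the obstacle, since $X_{2,2}^T$ does \emph{not} embed into $L^{\infty}$; here I would instead use $X_{2,2}^T\hookrightarrow C([0,T];H^1(\Omega_F))\hookrightarrow L^{\infty}(0,T;L^{6}(\Omega_F))$ together with $X_{4,4}^T\hookrightarrow L^{4}(0,T;W^{1,4}(\Omega_F))\hookrightarrow L^{4}(0,T;L^{\infty}(\Omega_F))$ (valid in three space dimensions because $4>3$), and conclude by H\"older in space and time, $\|(\widetilde{\bU}\cdot\nabla)\widehat{\bU}\|_{L^{4}(0,T;L^{4})}\leq\|\widetilde{\bU}\|_{L^{\infty}(0,T;L^{4})}\|\nabla\widehat{\bU}\|_{L^{4}(0,T;L^{\infty})}$. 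Collecting the four bounds and using $T\leq 1$ to absorb the higher powers of $T$ into the leading ones yields the Corollary.
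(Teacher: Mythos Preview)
Your proof is correct and follows the same overall strategy as the paper: use Lemma~\ref{estimates_1} for the coefficient bounds, extract a power of $T$ via H\"older in time after embedding $X_{2(m+1),2(m+1)}^T$ into $L^\infty(0,T;W^{1,2(m+1)}(\Omega_F))$, and handle the convective term by placing $\widetilde{\bU}$ in $L^\infty(0,T;L^{2(m+1)}(\Omega_F))$ and $\nabla\widehat{\bU}$ in $L^{2(m+1)}(0,T;L^\infty(\Omega_F))$. The paper reaches the same H\"older splitting but does so \emph{uniformly in $m$} by invoking Amann's vector-valued interpolation theorem (\cite[Theorem~5.2]{Amann2000}) to obtain $X_{2m,2m}^T\hookrightarrow W^{s,2m}(0,T;W^{2\theta,2m}(\Omega_F))\hookrightarrow L^\infty(0,T;L^{2(m+1)}(\Omega_F))$ with an explicit choice $\theta=\frac{3}{4m(m+1)}$, whereas you split into the borderline case $m=1$ (handled via $X_{2,2}^T\hookrightarrow C([0,T];H^1)\hookrightarrow L^\infty(0,T;L^6)$) and the supercritical case $m\geq 2$ (where $X_{2m,2m}^T\hookrightarrow L^\infty_{t,x}$ directly). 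Your route is more elementary and avoids the Amann reference; the paper's route is slightly more uniform and makes clear that no dichotomy is really needed. Either way the argument closes.
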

\begin{proof}
	Let $m\in\N$. By using the interpolation result
	in \cite[{Theorem 5.2}]{Amann2000} for $s_0=1, s_1=0$, $p_0=p_1=2m$ we conclude that
	\begin{equation*}
	X_{2m,2m}^T=L^{2m}
	(0,T;W^{2,2m}(\Omega_F))\cap W^{1,2m}
	(0,T;L^{2m}(\Omega_F))\hookrightarrow W^{s,2m}(0,T;W^{2\theta,2m}(\Omega_F)),
	\end{equation*}		
	for all $\theta\in(0,1)$ and $s<1-\theta$.
	Now for 
	$
	\theta=\frac{3}{4m(m+1)},
	$
	we can choose $\frac{1}{2m}<s<1-\theta$ since 
	\begin{equation*}
	1-\theta-\frac{1}{2m}
	=\frac{4m^2+2m-5}{4m(m+1)}
	\geq \frac{1}{4m(m+1)}>0,
	\end{equation*}
	and get
	\begin{align*}
	&\widetilde{\bU}\in X_{2m,2m}^T\hookrightarrow 
	\hookrightarrow W^{s,2m}(0,T;W^{2\theta,2m}(\Omega_F))\hookrightarrow L^{\infty}(0,T;L^{2(m+1)}(\Omega_F)),
	\\
	&\widehat{\bU}\in X_{2(m+1),2(m+1)}^T
	=L^{2(m+1)}
	(0,T;W^{2,2(m+1)}(\Omega_F))\cap W^{1,2(m+1)}
	(0,T;L^{2(m+1)}(\Omega_F))
	\\
	&\qquad\qquad\qquad\qquad\qquad
	\hookrightarrow
	L^{2(m+1)}
	(0,T;W^{1,\infty}(\Omega_F)),
	\end{align*}
	Therefore, we have
	\begin{equation*}
	\|({\widetilde{\bU}}\cdot \nabla )\widehat{\bU}\|_{L^{2(m+1)}(0,T;L^{2(m+1)}(\Omega_F))}
	\leq
	\|{\widetilde{\bU}}\|_{L^{\infty}(0,T;L^{2(m+1)}(\Omega_F))}
	\| \nabla \widehat{\bU}\|_{L^{2(m+1)}(0,T;L^{\infty}(\Omega_F))}.
	\end{equation*}
	and by Lemma \ref{estimates_1} we get
	\begin{equation*}
	\|\mathcal{N}\widehat{\bU} \|_{L^{2(m+1)}(0,T;L^{2(m+1)}(\Omega_F))}
	\leq C
	\|{\widetilde{\bU}}\|_{X_{2m,2m}^T}
	\| \widehat{\bU}\|_{X_{2(m+1),2(m+1)}^T}.
	\end{equation*}
	Next, we have
	\begin{align*}
	&\|\mathcal{M}\widehat{\bU}\|_{L^{2(m+1)}(0,T;L^{2(m+1)}(\Omega_F))}
	=\left\| 
	\sum _{j=1}^n \dot{\mathbf{Y}}_j \partial _j
	\widehat{\bU}_i 
	+ \sum _{j,k=1}^n \Big(\Gamma _{jk}^i \dot{\mathbf{Y}}_k +
	(\partial _k \mathbf{Y}_i)(\partial _j \dot{\mathbf{X}}_k)\Big)\widehat{\bU}_j
	\right\|_{L^{2(m+1)}(0,T;L^{2(m+1)}(\Omega_F))}
	\\
	&\quad\leq
        T^{\frac{1}{2(m+1)}}\|\dot{\bY}\|_{L^{\infty}(0,T;L^{\infty}(\Omega_F))}
	\|\nabla\widehat{\bU}\|_{L^{\infty}(0,T;L^{2(m+1)}(\Omega_F))}
	\\
	&\quad\qquad+ \left(\sup_{i,j,k}\|\Gamma_{j k}^{i}\|_{L^{\infty}L^{\infty}}\|\dot{\mathbf{Y}}\|_{L^{\infty}L^{\infty}}
	+ \|\nabla{\mathbf{Y}}\|_{L^{\infty}L^{\infty}}
	\|\nabla\dot{\mathbf{X}}\|_{L^{\infty}L^{\infty}}\right)
	T^{\frac{1}{2(m+1)}}\|\widehat{\bU}\|_{L^{\infty}(0,T;L^{2(m+1)}(\Omega_F))}
	\\
	&\quad\leq C
	T^{\frac{1}{2(m+1)}}
	\|\widehat{\bU}\|_{X_{2(m+1),2(m+1)}^T}.
	\end{align*}
	The last inequality follows by embedding 
	$$
	X_{2(m+1),2(m+1)}^T
	\hookrightarrow W^{\frac{3}{8},2(m+1)}(0,T;W^{1,2(m+1)}(\Omega_F))
	\hookrightarrow
	L^{\infty}(0,T;W^{1,2(m+1)}(\Omega_F)).
	$$ 
	The other terms can be estimated in a similar way as before. In the same way, we can get estimates for arbitrary $m\in\N$.
\end{proof}

\subsection{Proof of Proposition \ref{strong2}}

Now, we can finish the proof of Proposition \ref{strong2}. First we are going to show that $\mathcal{S}(\mathcal{K}_R)\subset \mathcal{K}_R$ for suitably chosen $R$ and $T$.
For $(\widehat{\bU},\widehat{P},\widehat{\bA},\widehat{\bOmega})\in\mathcal{K}_R$, Lemma \ref{estimates_3} implies that
\begin{align*}
&\|F^{*}\|_{L^2(0,T;L^2(\Omega_F))}
=\left\|(\mathcal{L}-\triangle )\widehat{\bU}-\mathcal{M}\widehat{\bU}
-{{\mathcal{N}}\widehat{\bU}}-(\mathcal{G}-\nabla )\widehat{P}
+ \mathcal{R}\right\|_{L^2(0,T;L^2(\Omega_F))}\\
&\leq
\|(\mathcal{L}-\triangle )\widehat{\bU}\|
_{L^2L^2}
+\|\mathcal{M}\widehat{\bU}\|_{L^2(0,T;L^2(\Omega_F))}
+\|{{\mathcal{N}}\widehat{\bU}}\|_{L^2(0,T;L^2(\Omega_F))}
+\|(\mathcal{G}-\nabla )\widehat{P}\|_{L^2(0,T;L^2(\Omega_F))}\\
&\quad +\|\mathcal{R}\|_{L^2(0,T;L^2(\Omega_F))}
\\
&\leq C(T^{\frac{1}{4}}
+
\|{\widetilde{\bU}}\|_{L^r(0,T;L^s(\Omega_F))}
)
\left(
\|\widehat{\bU}\|_{X_{2,2}^T}
+
\|\widehat{P}\|_{Y_{2,2}^T}
\right)
+\|\mathcal{R}\|_{L^2(0,T;L^2(\Omega_F))},
\end{align*}
and since
\begin{align*}
\|G^{*}\|_{L^2(0,T)}
&\leq 
\|\widetilde{\bOmega}\times \widehat{\bA}\|_{L^2(0,T)}
+ \|\mathcal{R}_{\ba}\|_{L^2(0,T)}\\
&\leq
\|\widetilde{\bOmega}\|_{L^2(0,T)}\|\widehat{\bA}\|_{L^{\infty}(0,T)}
+ \|\mathcal{R}_{\ba}\|_{L^2(0,T)}
\leq
\|\widetilde{\bOmega}\|_{L^2(0,T)}\|\widehat{\bA}\|_{H^{1}(0,T)}
+ \|\mathcal{R}_{\ba}\|_{L^2(0,T)}
\\
\|H^{*}\|_{L^2(0,T)}
&\leq
\|\widetilde{\bOmega}\times (\mathcal{J}\widehat{\bOmega})\|_{L^2(0,T)}
+ \|\mathcal{R}_{\bomega}\|_{L^2(0,T)}
\leq
C\left(
\|\widetilde{\bOmega}\|_{L^2(0,T)}\|\widehat{\bOmega}\|_{H^{1}(0,T)}
+ \|\mathcal{R}_{\bomega}\|_{L^2(0,T)}
\right)
\end{align*}
we obtain
\begin{align*}
\|F^{*}\|_{L^2(0,T;L^2(\Omega_F))}
&+ \|G^{*}\|_{L^2(0,T)}
+\|H^{*}\|_{L^2(0,T)}
\\
&\leq
C(
T^{\frac{1}{4}}
+ \|\widetilde{\bU}\|_{L^r(0,T;L^s(\Omega_F))}
+ \|\widetilde{\bA}\|_{L^2(0,T)}
+ \|\widetilde{\bOmega}\|_{L^2(0,T)}
)
\\
&\qquad\qquad\qquad\qquad
\left(
\|\widehat{\bU}\|_{X_{2,2}^T}
+
\|\widehat{P}\|_{Y_{2,2}^T}
+ \|\widehat{\bA}\|_{H^1(0,T)}
+ \|\widehat{\bOmega}\|_{H^1(0,T)}
\right)
\\
&\qquad+ \|\mathcal{R}\|_{L^2(0,T;L^2(\Omega_F))} + \|\mathcal{R}_{\ba}\|_{L^2(0,T)} +\|\mathcal{R}_{\bomega}\|_{L^2(0,T)}
\\
&\leq C(
T^{\frac{1}{4}}
+ \|\widetilde{\bU}\|_{L^r(0,T;L^s(\Omega_F))}
+ \|\widetilde{\bA}\|_{L^2(0,T)}
+ \|\widetilde{\bOmega}\|_{L^2(0,T)}
)R
+ \frac{1}{2C_0} R
\end{align*}
for $R>0$ large enough, where $C_0>0$ is the constant form Theorem \ref{regularity_fixedPoint}. Also, we can choose $T=T_0>0$ small enough, i.e. such that
$$
C(
T^{\frac{1}{4}}
+ \|\widetilde{\bU}\|_{L^r(0,T;L^s(\Omega_F))}
+ \|\widetilde{\bA}\|_{L^2(0,T)}
+ \|\widetilde{\bOmega}\|_{L^2(0,T)}
) < \frac{1}{2C_0}
$$
and, therefore we get
$$
C_0\left(\|F^{*}\|_{L^2(0,T;L^2(\Omega_F))}
+ \|G^{*}\|_{L^2(0,T)}
+\|H^{*}\|_{L^2(0,T)}\right)
\leq R.
$$
By Theorem \ref{regularity_fixedPoint} we conclude that $\mathcal{S}$ is well defined on $\mathcal{K}_R$, and $\mathcal{S}(\mathcal{K}_R)\subset \mathcal{K}_R$. It remains to show that $\mathcal{S}$ is a contraction.

For 
$(\widehat{\bU},\widehat{P},\widehat{\bA},\widehat{\bOmega})\in\mathcal{K}_R
$
and 
$$
\widetilde{\bU}\in L^{2}(0,T;V(t))\cap L^{\infty}(0,T;L^2(\Omega_F ))\cap L^{r}(0,T;L^{s}(\Omega_F)),
\quad
\widetilde{\bOmega}\in L^{2}(0,T),
$$
we have
\begin{align*}
\|F^{*}(\widehat{\bU}_1,\widehat{P}_1) &- F^{*}(\widehat{\bU}_2, \widehat{P}_2)\|_{L^2(0,T;L^2(\Omega_F))}
\leq C
(
T^{\frac{1}{4}}
+
\|{\widetilde{\bU}}\|_{L^r(0,T;L^s(\Omega_F))}
)
\left(
\|\widehat{\bU}_1-\widehat{\bU}_2\|_{X_{2,2}^{T}}
+
\|\widehat{P}_1-\widehat{P}_2\|_{Y_{2,2}^{T}}
\right)
\end{align*}
\begin{align*}
\|G^{*}(\widehat{\bA}_1)-G^{*}(\widehat{\bA}_2)\|_{L^2}
&\leq 
\|\widetilde{\bOmega}\|_{L^2}\|\widehat{\bA}_1-\widehat{\bA}_2\|_{H^{1}}
\\
\|H^{*}(\widehat{\bOmega}_1)-H^{*}(\widehat{\bOmega}_2)\|_{L^2}
&\leq c\|\widetilde{\bOmega}\|_{L^2}\|\widehat{\bOmega}_1-\widehat{\bOmega}_2\|_{H^{1}}
\end{align*}
Hence,
\begin{align*}
\|\mathcal{S}(\widehat{\bU}_1,\widehat{P}_1,\widehat{\bA}_1,\widehat{\bOmega}_1) 
&- \mathcal{S}(\widehat{\bU}_2,\widehat{P}_2,\widehat{\bA}_2,\widehat{\bOmega}_2)\|
\\
&\leq C_0C
(
T^{\frac{1}{4}}
+
\|{\widetilde{\bU}}\|_{L^r(0,T;L^s(\Omega_F))}
+ \|\widetilde{\bA}\|_{L^2(0,T)}
+ \|\widetilde{\bOmega}\|_{L^2(0,T)}
)
\\
&\qquad
\left(
\|\widehat{\bU}_1-\widehat{\bU}_2\|_{X_{2,2}^T}
+
\|\widehat{P}_1-\widehat{P}_2\|_{Y_{2,2}^T}
+
\|\widehat{\bA}_1-\widehat{\bA}_2\|_{H^1(0,T)}
+
\|\widehat{\bOmega}_1-\widehat{\bOmega}_2\|_{H^1(0,T)}
\right)
\end{align*}
and, again, for $T=T_0>0$ small enough, we get
\begin{align*}
\|\mathcal{S}(\widehat{\bU}_1,\widehat{P}_1,\widehat{\bA}_1,\widehat{\bOmega}_1) 
&- \mathcal{S}(\widehat{\bU}_2,\widehat{P}_2,\widehat{\bA}_2,\widehat{\bOmega}_2)\|
\\
&\leq \mu
\| 
(\widehat{\bU}_1,\widehat{P}_1,\widehat{\bA}_1,\widehat{\bOmega}_1) 
- (\widehat{\bU}_2,\widehat{P}_2,\widehat{\bA}_2,\widehat{\bOmega}_2)
\|
\end{align*}
for some $0<\mu<1$, so $\mathcal{S}$ i contraction. Banach fixed point theorem implies that $\mathcal{S}$ has a unique fixed point $({\bU}^{*},{P}^{*},{\bA}^{*},{\bOmega}^{*})\in\mathcal{K}_R$, which is a unique solution to the problem \eqref{FixedDomain_stokes}-\eqref{right1}. Therefore, we have shown part a) of Proposition \ref{strong2}.

\begin{remark}
The choice of the time $T_0$ does not depend on the solution itself, but only on the norms 
$
\|{\widetilde{\bU}}\|_{L^r(0,T_0;L^s(\Omega_F))}, \|{\widetilde{\bA}}\|_{L^2(0,T_0)}
$
and
$
\|{\widetilde{\bOmega}}\|_{L^2(0,T_0)}.
$	
Since the norms $
\|{\widetilde{\bU}}\|_{L^r(0,T;L^s(\Omega_F))}, \|{\widetilde{\bA}}\|_{L^2(0,T)}
$
and
$
\|{\widetilde{\bOmega}}\|_{L^2(0,T)}
$
are given, we can split the interval $\left[0,T\right]$ into smaller ones $\left[T_{i-1},T_i\right]$, such that
$$
\|{\widetilde{\bU}}\|_{L^r(T_{i-1},T_{i};L^s(\Omega_F))}
+ \|{\widetilde{\bOmega}}\|_{L^2(0,T)}
+ \|{\widetilde{\bOmega}}\|_{L^2(0,T_0)}
< \varepsilon
$$
repeat the procedure at each interval.
\end{remark}

For part b) of Proposition \ref{strong2} we can proceed as in part a) by using Corollary \ref{estimates_4} instead of Lemma \ref{estimates_3}.

Notice that Theorem \ref{mainResult2} follows directly from our construction and Proposition \ref{strong2}. Namely, Proposition \ref{strong2} a), change of coordinates and Lemma \ref{uniqueness_1} implies
\begin{align*}
&\bU, \widetilde{\bU}\in L^{2}(\varepsilon,T;H^{2}(\Omega_F))\cap
H^{1}(\varepsilon,T;L^{2}(\Omega_F)),
\\
&P, \widetilde{P}\in L^{2}(\varepsilon,T;{\quotient{H^{1}(\Omega_F)}{\R}}),
\\
&\bA,\bOmega, \widetilde{\bA}, \widetilde{\bOmega}\in
H^{1}(\varepsilon,T),
\end{align*}
for all $\varepsilon>0$.
By induction and  using Proposition \ref{strong2} b) we get property \eqref{LpStrong}.

\section{Higher time derivatives estimates}
\label{Section:time_derivatives}
To summarize, in the previous Section we proved that:
\begin{equation}	\label{strong_p}
\begin{split}
&\widetilde{\bU},{\bU}\in 
W^{1,p}(\varepsilon,T;L^p(\Omega_F))\cap L^{p}(\varepsilon,T;W^{2,p}(\Omega_F)),
\\
&\widetilde{P},P
\in L^p(\varepsilon,T;{\quotient{W^{1,p}(\Omega_F)}{\R}}),
\\
&\widetilde{\bA},{\bA}, \widetilde{\bOmega},{\bOmega}\in W^{1,p}(\varepsilon,T),
\end{split}
\end{equation}
for all $\varepsilon>0$ and all $1\leq p<\infty$.
Now, we want to show inductively that all time derivatives of the solution has the following regularity properties:
\begin{equation}	\label{time1}
\begin{split}
&\partial_t^{l}\widetilde{\bU},\partial_t^{l}{\bU}\in W^{1,p}(\varepsilon,T;L^p(\Omega_F))\cap L^{p}(\varepsilon,T;W^{2,p}(\Omega_F)),
\\
&\partial_t^{l}\widetilde{P},\partial_t^{l}P
\in L^p(\varepsilon,T;{\quotient{W^{1,p}(\Omega_F)}{\R}}),
\\
&\frac{\d^{l}}{\dt^{l}}\widetilde{\bA},\frac{\d^{l}}{\dt^{l}}{\bA}, \frac{\d^{l}}{\dt^{l}}\widetilde{\bOmega},\frac{\d^{l}}{\dt^{l}}{\bOmega}\in W^{1,p}(\varepsilon,T),
\end{split}
\end{equation}
for all $l\geq 1$ and all $\varepsilon>0$.

In this Section, for simplicity of presentation, we prove \eqref{time1} for $l=1$ and $p=2$, while the general case is done in Appendix, see Section \ref{Section:time_derivatives_induction}.
We consider the problem \eqref{FixedDomain_stokes} with right hand side
\begin{equation}	\label{right2}
\begin{split}
&F^{*} = F_1^{*} = F(\bU^{*},P^{*}) + tF_1(\bU,P) + \partial_t\bU,
\\
&G^{*} = G_1^{*} = G(\bA^{*}) + tG_1(\bA)  + \frac{\d}{\dt}\bA,
\\
&H^{*} = H_1^{*} = H(\bOmega^{*}) + tH_1(\bOmega) + \mathcal{J}\left(\frac{\d}{\dt}\bOmega\right),
\end{split}
\end{equation}
where
\begin{equation}\label{F1}
F_1(\bU,P)=\mathcal{L}_1(\bU)-\mathcal{M}_1(\bU)-\mathcal{N}_1(\bU)-\mathcal{G}_1(P).
\end{equation}
\begin{equation}\label{G1H1}
G_1 =
-\frac{\d}{\dt}\widetilde{\bOmega}\times \bA,
\quad
H_1 =
-\frac{\d}{\dt}\widetilde{\bOmega}\times (\mathcal{J}\bOmega),
\end{equation}
and $\mathcal{L}_1$, $\mathcal{M}_1$, $\mathcal{N}_1$ and $\mathcal{G}_1$ denote operators obtained by taking time derivative of the coefficients in operators $\mathcal{L}$, $\mathcal{M}$, $\mathcal{N}$ and $\mathcal{G}$
\begin{align}
(\mathcal{N}_1(\bU))_i 
&= \left(({\partial_{t}\widetilde{\bU}}\cdot \nabla ){\bU}\right)_i 
- \sum_{j,k=1}^n \partial_{t}(\Gamma ^i_{jk} {\widetilde{\bU}_j}){\bU}_k
\label{f2}\\
(\mathcal{M}_1(\bU))_i =&
\sum _{j=1}^n \partial_{t}(\dot{\mathbf{Y}}_j )\partial _j
\mathbf{U}_i 
+ 
\partial_{t}\Big(\Gamma _{jk}^i \dot{\mathbf{Y}}_k 
+(\partial _k \mathbf{Y}_i)(\partial _j \dot{\mathbf{X}}_k)\Big)\mathbf{U}_j
\label{f4}\\
(\mathcal{L}_1(\bU))_i&=
\sum_{j,k=1}^{n}\partial
_{j}(\partial_{t} g^{jk}\partial_k \mathbf{U}_{i})
+ 2\sum_{j,k,m=1}^{n}
\partial_{t}(g^{km}\Gamma_{jk}^{i})\partial _{m}\mathbf{U}_{j}  
\notag \\
& \qquad\qquad\qquad
+\sum_{j,k,m=1}^{n}
\partial_{t}\Big(
\partial_{k}(g^{km}\Gamma_{jm}^{i})
+\sum_{m=1}^{n}g^{km}\Gamma _{jm}^{m}\Gamma_{km}^{i}\Big)
\mathbf{U}_{j}
\label{f5}\\
\mathcal{G}_1(P)=&
\sum_{j=1}^{n}\partial_{t}g^{ij}\partial_{j}P.
\label{f6}
\end{align}

\begin{remark} \label{remark_t1}
This problem is obtained by formal differentiation of \eqref{FixedDomain_2} w.r.t. time variable, multiplication by $t$ (to cut-off initial condition), and setting
$$
\bU^{*} = \bU_1^{*} = t\partial_t \bU,
\quad
P^{*} = P_1^{*} = t\partial_t P,
\quad
\bA^{*} = \bA_1^{*} = t\frac{\d}{\dt}\bA,
\quad
\bOmega^{*} = \bOmega_1^{*} = t\frac{\d}{\dt}\bOmega.
$$
To have vanishing initial data, the time derivative $\partial_t\bU$ should not explode too fast at $t=0$. Since
	$$
	\bU\in
	L^2(\varepsilon,T;H^2(\Omega_F))
	\cap
	H^1(\varepsilon,T;L^2(\Omega_F)),
	\quad
	\forall \varepsilon>0,
	$$
	it is better to multiply by $t-\varepsilon'$ and look at the solution for $ t>\varepsilon'$. Here, for $\varepsilon>0$ arbitrary small, we can chose $\varepsilon<\varepsilon'<2\varepsilon$ such that $\partial_t^{l}\bU(\varepsilon')\in L^2(\Omega_F)$. After the translation $t\mapsto t+\varepsilon'$, we obtain \eqref{FixedDomain_stokes} with right hand side \eqref{right2}. Therefore, in the following we can replace $\varepsilon$ with 0 in the assumption \eqref{strong_p}.
\end{remark}
In what follows we show that described problem has a unique solution $(\bU^{*},P^{*},\bA^{*},\bOmega^{*})$ such that
\begin{equation*}
\begin{split}
&\bU^{*}\in H^1(0,T;L^2(\Omega_F))\cap L^2(0,T;H^{2}(\Omega_F)),
\\
&P^{*}\in L^2(0,T;{\quotient{H^{1}(\Omega_F)}{\R}}),
\\
&\bA^{*},\bOmega^{*}\in H^1(0,T),
\end{split}
\end{equation*}
and that it equals 
$
\left(
t\partial_t\bU,t\partial_t P,t\frac{\d}{\dt}\bA,t\frac{\d}{\dt}\bOmega
\right).
$
Then it follows that
\begin{equation*}
\begin{split}
&\partial_t\bU\in H^{1}(\varepsilon,T;L^2(\Omega_F))\cap L^{2}(\varepsilon,T;H^{2}(\Omega_F)),
\\
&\partial_t P\in L^{2}(\varepsilon,T;{\quotient{H^{1}(\Omega_F)}{\R}}),
\\
&\frac{\d}{\dt}\bA, \frac{\d}{\dt}\bOmega\in H^{1}(\varepsilon,T),
\end{split}
\end{equation*}
for all $\varepsilon>0$.
We use  Proposition \ref{strong2} with
\begin{equation*}
\begin{split}
&\mathcal{R} = tF_1(\bU,P)+\partial_t\bU,
\quad\mathcal{R}_{\ba} = tG_1(\bA)+\frac{\d}{\dt}\bA,
\quad\mathcal{R}_{\bomega} = tH_1(\bOmega)+\frac{\d}{\dt}\bOmega.
\end{split}
\end{equation*}
Therefore, it is sufficient to show that
\begin{equation*}
\begin{split}
&\mathcal{R}\in L^2(0,T;L^2(\Omega_{F}))
\quad\mathcal{R}_{\ba}\in L^2(0,T),
\quad\mathcal{R}_{\bomega} \in L^2(0,T).
\end{split}
\end{equation*}

The critical term  is \eqref{f2} which comes from the convective term. By Theorem  \ref{mainResult2} and Remark \ref{remark_t1} we have
\begin{align*}
\widetilde{\bU},\bU\in L^{4}(0,T;W^{2,4}(\Omega_F))\cap
W^{1,4}(0,T;L^{4}(\Omega_F)).
\end{align*}
Therefore
\begin{align*}
\|({\partial_t\widetilde{\bU}}\cdot \nabla ){\bU}\|_{L^{2}(0,T;L^{2}(\Omega_F))}^2
&= \int_{0}^{T}\int_{\Omega_F}
|\underbrace{{\partial_t\widetilde{\bU}}}_{\in L_t^4L_x^4}|^2 
\,|\underbrace{\nabla {\bU}}_{\in L_t^{4}W_x^{1,4}}|^2\,\d\by\d t
\leq
\int_{0}^{T}
\|{\partial_t\widetilde{\bU}}\|_{L^4(\Omega_F)}^2\| \nabla {\bU}\|_{L^{4}(\Omega_F)}^2\,\d t\\
&\leq
\|{\partial_t\widetilde{\bU}}\|_{L^{4}(0,T;L^{4}(\Omega_F))}^2
\|\nabla\bU\|_{L^{4}(0,T;L^{4}(\Omega_F))}^2
\leq
C
\|{\widetilde{\bU}}\|_{W^{1,4}(0,T;L^{4}(\Omega_F))}^2
\|\bU\|_{L^{4}(0,T;W^{2,4}(\Omega_F))}^2
\end{align*}
All other terms can be estimated as in Section \ref{Section:StrongSolution}, so by Proposition \ref{strong2} we conclude that there exists a unique strong solution $(\bU_1^{*},P_1^{*},\bA_1^{*},\bOmega_1^{*})$ satisfying
\begin{equation}\label{UStarRegularity}
\begin{split}
&\bU_1^{*}\in L^{2}(0,T;H^{2}(\Omega_F))\cap
H^{1}(0,T;L^{2}(\Omega_F)),
\\
& P_1^{*}\in L^{2}(0,T;{\quotient{H^{1}(\Omega_F)}{\R}}),
\\
&\bA_1^{*},\bOmega_1^{*}\in
H^{1}(0,T).
\end{split}
\end{equation}
It remains to prove that the obtained solution equals 
$
\left(
t\partial_t\bU,t\partial_t P,t\frac{\d}{\dt}\bA,t\frac{\d}{\dt}\bOmega
\right),
$
which will imply the statement of Proposition \ref{time_derivatives} for $l=1$ and $p=2$.

\subsection{$(\bU_1^{*},P_1^{*},\bA_1^{*},\bOmega_1^{*})=\left(t\partial_t\bU,t\partial_t P,t\frac{\d}{\dt}\bA,t\frac{\d}{\dt}\bOmega\right)$} 
\label{Section:Uniqueness}

So far we have shown that there is a unique strong solution $(\bU,P,\bA,\bOmega)$ of problem \eqref{FixedDomain_2} satisfying \eqref{strong_p}, and a unique strong solution
$(\bU_1^{*},P_1^{*},\bA_1^{*},\bOmega_1^{*})$
of \eqref{FixedDomain_stokes} with right hand side \eqref{right2} satisfying \eqref{UStarRegularity}.
In order to complete the proof of Proposition \ref{time_derivatives}, it is necessary to show that
$$
(\bU_1^{*},P_1^{*},\bA_1^{*},\bOmega_1^{*}) = \left(
t\partial_t\bU,t\partial_t P,t\frac{\d}{\dt}\bA,t\frac{\d}{\dt}\bOmega
\right).
$$
Notice that while the above equality formally holds, it is delicate to prove it because we do not have any information on $\partial_t P$.
We consider problem \eqref{FixedDomain_2} in the form
\begin{equation}	\label{equation1}
\begin{array}{l}
\left.
\begin{array}{l}
\partial _{t}{\bU} = \mathcal{F}(\bU) - \mathcal{G}(P), \\
\divg{\bU} = 0
\end{array}
\right\} \;\mathrm{in}\;(0,T)\times\Omega_{F},
\\
\left.
\begin{array}{l}
\frac{\d}{\dt}\bA
=-\int_{\partial S_{0}}\mathcal{T}({\bU},{P})\bN\,\d\gamma(\by)
+ G(\bA)
\\
\frac{\d}{\dt}(\mathcal{J}{\bOmega})
=
-\int_{\partial S_{0}}
\by\times {\mathcal{T}}({\bU},{P})\bN \,\d\gamma(\by)
+ H(\bOmega)
\end{array}
\right\} \;\mathrm{in}\;(0,T),
\\
{\bU} = {\bOmega}\times\by + {\bA}\qquad \mathrm{on}\;(0,T)\times\partial S_0,
\\
{\bU} = 0\qquad \mathrm{on}\;(0,T)\times\partial\Omega,
\\
{\bU}(0,.)=\bu_0\qquad \mathrm{in}\;\Omega,\quad {\bA}(0)=\ba_0,\quad
{\bOmega}(0)=\bomega_0
\end{array}
\end{equation}
where
\begin{equation}	\label{rightF2}
\begin{split}
&\mathcal{F}(\bU)
= \mathcal{L}{\bU}
-\mathcal{M}{\bU}
-{\mathcal{N}}{\bU},
\\
G(\bA) &= -\widetilde{\bOmega}\times {\bA},\quad
H(\bOmega) = -\widetilde{\bOmega}\times (\mathcal{J}{\bOmega}),
\end{split}
\end{equation}
and the problem \eqref{FixedDomain_stokes} with the right hand side \eqref{right2}
\begin{equation}	\label{equation2}
\begin{array}{l}
\left.
\begin{array}{l}
\partial_{t}{\bU}_1^{*} = \mathcal{F}(\bU_1^{*}) - \mathcal{G}(P_1^{*}) + tF_1(\bU,P) + \partial_t\bU , \\
\divg{\bU}_1^{*} = 0
\end{array}
\right\} \;\mathrm{in}\;(0,T)\times\Omega_{F},
\\
\left.
\begin{array}{l}
\frac{\d}{\dt}{\bA}_1^{*}
=-\int_{\partial S_{0}}\mathcal{T}({\bU}_1^{*},{P}_1^{*})\bN\,\d\gamma(\by)
+ G(\bA_1^{*})+ tG_1(\bA) + \frac{\d}{\dt}\bA
\\
\frac{\d}{\dt}(\mathcal{J}{\bOmega}_1^{*})
=
-\int_{\partial S_{0}}
\by\times {\mathcal{T}}({\bU}_1^{*},{P}_1^{*})\bN \,\d\gamma(\by)
+ H(\bOmega_1^{*})+ tH_1(\bOmega)
+ \mathcal{J}\left(\frac{\d}{\dt}\bOmega\right)
\end{array}
\right\} \;\mathrm{in}\;(0,T),
\\
{\bU}_1^{*} = {\bA}_1^{*} + {\bOmega}_1^{*}\times\by
\qquad \mathrm{on}\;(0,T)\times\partial S_0,
\\
{\bU}_1^{*} = 0\qquad \mathrm{on}\;(0,T)\times\partial\Omega,
\\
{\bU}_1^{*}(0,.)=0\qquad \mathrm{in}\;\Omega,\quad {\bA}_1^{*}(0)=0,\quad
{\bOmega}_1^{*}(0)=0.
\end{array}
\end{equation}
Operators $\mathcal{L}$, $\mathcal{M}$, $\mathcal{N}$ and $\mathcal{G}$ are defined by formulas \eqref{OpL}-\eqref{OpP}, operators $F_1, G_1$ and $H_1$ by \eqref{F1}-\eqref{G1H1}.

In order to compare solutions of \eqref{equation1} and \eqref{equation2} we would like to differentiate \eqref{equation1} with respect to time, but the right-hand side is not regular enough, since $\partial_t\bU\in L^2(\varepsilon,T;L^2(\Omega_F))$ and the pressure $P$ is not regular enough in time variable. This means that we have to use some regular approximations of the solution for \eqref{equation1}.
The idea is to use Galerkin's method. First we will multiply the equation $\eqref{equation1}_1$ by
\begin{equation*}
\mathbb{G}=(g_{ij})=\nabla\bX^T \nabla\bX
\end{equation*}
and obtain pressure term $\nabla P$ on the right-hand side in the equation
\begin{equation}\label{equation1_1}
\mathbb{G}\partial_t \bU=\mathbb{G}\mathcal{F}(\bU)-\nabla P\qquad\mathrm{in}\;(0,T)\times\Omega_{F},
\end{equation} 
since 
$
\mathbb{G}\mathcal{G}(P)=\nabla P.
$
Then we can get rid of the pressure by using a divergence-free test function, write down an approximative problem and to show that it has a unique solution which is a good approximation for $\bU$.
That allows us to differentiate the approximative problem with respect to time and get estimates for $\partial_t \bU$. Finally, we will show that $\bU_1^{*} = t\partial_t \bU$ by using the equation for approximative problem and the equation for $\bU_1^{*}$. The point is that in this way we will avoid the term with $\partial_t P$.	

We are going to use following function spaces
\begin{itemize}
	\item 
	$
	\mathbb{H}(\Omega_{F}) = \left\lbrace (\bv,\ba,\bomega)\in L^2(\Omega)\times\R^3\times\R^3 \,:\, \divg\bv=0,\, \bv\cdot\bn|_{\partial\Omega} = 0,\, \bv|_{S_0}(\by)=\ba + \bomega\times\by \right\rbrace 
	$
	\item 
	$
	\mathbb{V}(\Omega_{F}) = \left\lbrace (\bv,\ba,\bomega)\in H_0^1(\Omega)\times\R^3\times\R^3 \,:\, \divg\bv=0,\, \bv|_{S_0}(\by)=\ba + \bomega\times\by \right\rbrace 
	$
\end{itemize}
Let
$
\left\lbrace \,
\left(
{\bPsi}_i,
{\bPsi}^{\ba}_i,
{\bPsi}^{\bomega}_i
\right),
i\in\N
\,\right\rbrace 
$
be an orthonormal basis for $\mathbb{H}(\Omega_{F})$
with scalar product
\begin{equation}	\label{SkalProd}
\left( (\bphi,\bphi_{\ba},\bphi_{\bomega}),(\bpsi,\bpsi_{\ba},\bpsi_{\bomega})\right) 
= \int_{\Omega_F}\bpsi\cdot\bphi\,\d\by
+ \bphi_{\ba}\cdot\bpsi_{\ba} + \mathcal{J}\bphi_{\bomega}\cdot\bpsi_{\bomega}.
\end{equation}
We define finite-dimensional space
\begin{align*}
\mathbb{V}_m 
=span\left\lbrace\,
\left(
{\bPsi}_1,
{\bPsi}^{\ba}_1,
{\bPsi}^{\bomega}_1
\right),...,
\left(
{\bPsi}_m,
{\bPsi}^{\ba}_m,
{\bPsi}^{\bomega}_m
\right)\,
\right\rbrace 
\end{align*}
For $m\in\N$ we observe the following approximate problem
\begin{equation}\label{approx}
\begin{split}
	&\int_{\Omega_F}\mathbb{G}\partial_{t}{\bU^m}
	\cdot{\bPsi}_i
	\, \d\by
	+ \frac{\d}{\dt}\bA^m\cdot{\bPsi}_i^{\ba}
	+ \frac{\d}{\dt}(\mathcal{J}{\bOmega^m})\cdot{\bPsi}_i^{\bomega}
	\\
	&\qquad+ \left\langle \mathbb{G}\mathcal{F}(\bU^m),{\bPsi}_i\right\rangle
	- G(\bA^m)\cdot{\bPsi}_i^{\ba}
	- H(\bOmega^m)\cdot{\bPsi}_i^{\bomega}
	= 0
	\end{split}
\end{equation}
for $i=1,...,m$, where
\begin{equation}	\label{weakF}
	\left\langle \mathbb{G}\mathcal{F}(\bU^m),{\bPsi}_i\right\rangle 
	=
	\left\langle \mathbb{G}\mathcal{L}\bU^m,{\bPsi}_i\right\rangle 
	+ \int_{\Omega_F}\mathbb{G}(\mathcal{M}\bU^m + \mathcal{N}\bU^m)
	\cdot{\bPsi}_i
	\, \d\by,
\end{equation} 
\begin{equation}\label{weakL}
		\begin{split}
		\left\langle \mathbb{G}\mathcal{L}\bU,\bpsi\right\rangle 
		&= \int_{\Omega_{F}} \Big(
		2\D\bU\cdot\D\bpsi
		+ (g_{ik}g^{jl}-\delta_{ik}\delta_{jl})\partial_j\bU_i\partial_l\bpsi_k
		\\
		&\qquad
		+\left(\Gamma_{kl}^m g_{im}g^{jl} + \Gamma_{ik}^j\right)\partial_j\bU_i\bpsi_k
		+\left(\Gamma_{ij}^m g_{im}g^{jl} + \Gamma_{ij}^l\delta_{jk}\right)\bU_i\partial_l\bpsi_k
		\\
		&\qquad
		+\left(\Gamma_{ij}^m\Gamma_{kl}^p g_{mp}g^{jl} + \Gamma_{ij}^l\Gamma_{kl}^j\right)\bU_i\bpsi_k
		\Big)\d\by
		\end{split}
\end{equation}
and
\begin{equation*}
\left(
\bU^m(t,\by),
\bA^m(t),
\bOmega^m(t)
\right)
=  \sum_{j=1}^{m} c_{jm}(t)
\left(
{\bPsi}_j(\by),
{\bPsi}^{\ba}_j,
{\bPsi}^{\bomega}_j
\right),
\qquad {c_{jm}\in H^{1}(0,T)}.
\end{equation*}
Equation \eqref{approx} is obtained by summing the equation \eqref{equation1_1} multiplied by the test function ${\bPsi}_i$ and integrated over $\Omega_{F}$ with the equations $\eqref{equation1}_3$ and $\eqref{equation1}_4$ multiplied by the test functions ${\bPsi}_i^{\ba}$ and ${\bPsi}_i^{\bomega}$, respectively.
Then from \eqref{weakL} using integration by parts we obtain
\begin{align*}
\left\langle \mathbb{G}\mathcal{L}(\bU),\bpsi\right\rangle
&=-\int_{\Omega_F}\mathbb{G}\mathcal{L}\bU\cdot\bpsi
\, \d\by 
+ \int_{\partial S_0}\mathcal{T}(\bU,P)\bN\cdot\bpsi\, \d\gamma(\by)
+ \int_{\Omega_F}\nabla P\cdot\bpsi
\, \d\by,
\end{align*}
and therefore,
\begin{equation}\label{weakF_2}
\left\langle \mathbb{G}\mathcal{F}(\bU),\bpsi\right\rangle
=-\int_{\Omega_F}\mathbb{G}\mathcal{F}(\bU)\cdot\bpsi
\, \d\by 
+ \int_{\partial S_0}\mathcal{T}(\bU,P)\bN\cdot\bpsi\, \d\gamma(\by)
+ \int_{\Omega_F}\nabla P\cdot\bpsi
\, \d\by,
\end{equation}
for regular enough and divergence-free functions $\bU, \bpsi$.	

Together with the initial conditions
\begin{equation*}
\left(
\bU^m(0,\by),
\bA^m(0),
\bOmega^m(0)
\right)
=  \sum_{j=1}^{m} c_{jm}^0
\left(
{\bPsi}_j(\by),
{\bPsi}^{\ba}_j,
{\bPsi}^{\bomega}_j
\right),
\qquad c_{jm}^0=\left(
\left(
\bu_0,
\ba_0,
\bomega_0
\right),
\left(
{\bPsi}_j(\by),
{\bPsi}^{\ba}_j,
{\bPsi}^{\bomega}_j
\right)
\right). 
\end{equation*}
there exists a unique solution $(\bU^m,\bA^m,\bOmega^m)$ for \eqref{approx} on some interval $[0,T_m]$, $T_m\leq T$ ($c_{jm}\in H^1(0,T_k)$). 

To show that $(\bU^m,\bA^m,\bOmega^m)$ converges to the solution $(\bU,\bA,\bOmega)$, we have to derive the energy estimates. 
We multiply \eqref{approx} by $c_{jm}$, and sum over $j$ from $1$ to $m$, and if we go back to physical domain $\Omega_F(t)$ with 
\begin{equation*}
	\bu^m(t,\bx) = \nabla \bX(t,\bY(t,\bx))\bU^m(t,\bY(t,\bx)),
	\quad
	\ba^m(t) = \Q(t)\bA^m(t),
	\quad
	\bomega^m(t) = \Q(t)\bOmega^m(t)
\end{equation*}
we will obtain
\begin{equation*}
	\begin{split}
	&\int_{\Omega_F(t)}\partial_{t}{\bu^m}
	\cdot\bu^m
	\, \d\bx
	+ \frac{\d}{\dt}(\J{\bomega^m})\cdot\bomega^m
	+ \frac{\d}{\dt}\ba^m\cdot\ba^m
	+\int_{\Omega_{F}(t)}2|\D\bu^m|^2\,\d\bx
	+ \int_{\Omega_F(t)} \widetilde{\bu}\cdot\nabla\bu^m\cdot\bu^m
	\, \d\bx
	= 0.
	\end{split}
\end{equation*}
Since
\begin{align*}
\int_{\Omega_F(t)}\partial_{t}{\bu^m}
\cdot\bu^m
\, \d\bx
+ \frac{\d}{\dt}(\J{\bomega^m})\cdot\bomega^m
+ \frac{\d}{\dt}\ba^m\cdot\ba^m
&= \int_{\Omega_F(t)}\frac{1}{2}\partial_{t}|\bu^m|^2
\, \d\bx
+ \frac{\d}{\dt}(\J{\bomega^m})\cdot\bomega^m
+ \frac{\d}{\dt}\ba^m\cdot\ba^m
\\
&= \frac{1}{2}\frac{\d}{\dt}\|\bu^m(t)\|_{L^2(\Omega)}^2
- \frac{1}{2} \int_{\Omega_{F}(t)}\widetilde{\bu}\cdot\nabla|\bu^m|^2\,\d\bx
\end{align*}
it follows that
\begin{equation*}
	\frac{1}{2}\frac{\d}{\dt}\|\bu^m(t)\|_{L^2(\Omega)}^2
	+\int_{\Omega_{F}(t)}2|\D\bu^m|^2\,\d\bx
	= 0.
\end{equation*}
By integrating the equality on $(0,t)$ we obtain the estimate
\begin{equation*}
\frac{1}{2}\|\bu^m(t)\|_{L^2(\Omega)}^2
+ 2\int_{0}^{t}\|\D\bu^m\|_{L^2(\Omega_{F}(\tau))}^2\,\d\tau
= \frac{1}{2}\|\bu^m(0)\|_{L^2(\Omega)}^2
\leq \frac{1}{2}\|\bu_0\|_{L^2(\Omega)}^2,
\quad\forall t\in [0,T_m]
\end{equation*}
from which we conclude that $|c_{jm}(t)|<\|\bu_0\|_{L^2(\Omega)}$, so the inequality holds for all $t\in[0,T]$.
Hence, $\bu^m$ is bounded in $L^{\infty}(0,T;L^2(\Omega))\cap L^2(0,T;H^1(\Omega))$ which implies that $\bu^m\to\bar{\bu}$  weakly in $L^2(0,T;H^1(\Omega))$ and weakly-* in $L^{\infty}(0,T;L^2(\Omega))$. 

Let us show that $\bar{\bu}$ is a weak solution for \eqref{FSINoslip}. 
We take test function
\begin{equation*}
\left(
\bpsi(t,\by),
\bpsi_{\ba}(t),
\bpsi_{\bomega}
\right)
= h(t)
\left(
{\bPsi}_j(\by),
{\bPsi}^{\ba}_j,
{\bPsi}^{\bomega}_j
\right)
\qquad h\in C_c^{\infty}([0,T)),
\end{equation*}
multiply \eqref{approx} by $h(t)$ and write the equation in the physical domain $\Omega_F(t)$ 
\begin{equation*}
\begin{split}
&\int_{\Omega_F(t)}\partial_{t}{\bu^m}
\cdot\bphi
\, \d\bx
+ \frac{\d}{\dt}(\J{\bomega^m})\cdot\bphi_{\bomega}
+ \frac{\d}{\dt}\ba^m\cdot\bphi_{\ba}
+ \int_{\Omega_F(t)}2\D{\bu^m}
\cdot\D\bphi
\, \d\bx
+ \int_{\Omega_F(t)} \widetilde{\bu}\cdot\nabla\bu^m\cdot\bphi
\, \d\bx
= 0
\end{split}
\end{equation*}
with
\begin{equation*}
\bphi(t,\bx) = \nabla \bX(t,\bY(t,\bx))\bpsi(t,\bY(t,\bx)),
\quad
\bphi_{\ba}(t) = \Q(t)\bpsi_{\ba}(t),
\quad
\bphi_{\bomega}(t) = \Q(t)\bpsi_{\bomega}(t).
\end{equation*}
By integrating on $(0,T)$ and using the integration by parts we obtain
\begin{equation*}
\begin{split}
&\int_{0}^{T}\int_{\Omega_F(t)}{\bu^m}
\cdot\partial_{t}\bphi
\, \d\bx\d t
+\int_{0}^{T} \left(
\J{\bomega^m}\cdot\frac{\d}{\dt}\bphi_{\bomega}
+ \ba^m\cdot\frac{\d}{\dt}\bphi_{\ba}
\right)
\,\d t
\\
&- \int_{0}^{T}\int_{\Omega_F(t)}2\D{\bu^m}
\cdot\D\bphi
\, \d\bx\d t
+ \int_{0}^{T}\int_{\Omega_F(t)} (\widetilde{\bu}\otimes\bu^m)\cdot\nabla\bphi^T
\, \d\bx\d t
\\
&= -\int_{\Omega_F}\bu^m(0)\cdot\bphi(0)\, \d\bx
-\mathcal{J}{\bomega^m}(0)\cdot\bphi_{\bomega}(0)
-\ba^m(0)\cdot\bphi_{\ba}(0).
\end{split}
\end{equation*}
Now, we let $m\to\infty$
\begin{equation} \label{weak_linear}
\begin{split}
&\int_{0}^{T}\int_{\Omega_F(t)}{\bar{\bu}}
\cdot\partial_{t}\bphi
\, \d\bx\d t
+\int_{0}^{T} \left(
\J{\bar{\bomega}}\cdot\frac{\d}{\dt}\bphi_{\bomega}
+ \bar{\ba}\cdot\frac{\d}{\dt}\bphi_{\ba}
\right)
\,\d t
\\
&- \int_{0}^{T}\int_{\Omega_F(t)}2\D{\bar{\bu}}
\cdot\D\bphi
\, \d\bx\d t
+ \int_{0}^{T}\int_{\Omega_F(t)} (\widetilde{\bu}\otimes\bar{\bu})\cdot\nabla\bphi^T
\, \d\bx\d t
\\
&= -\int_{\Omega_F}\bar{\bu}(0)\cdot\bphi(0)\, \d\bx
-\mathcal{J}{\bomega}_0\cdot\bphi_{\bomega}(0)
-{\ba}_0\cdot\bphi_{\ba}(0)
\end{split}
\end{equation}
and go to the cylindrical domain
\begin{equation*}
	\begin{split}
	&\int_{0}^{T}\int_{\Omega_F}\bar{\bU}
	\cdot\partial_{t}\left(\mathbb{G}\bpsi\right)
	\, \d\by\d t
	+\int_{0}^{T}\left(
	\bar{\bA}\cdot\frac{\d}{\dt}\bpsi_{\ba}
	+ \mathcal{J}\bar{\bOmega}\cdot\frac{\d}{\dt}\bpsi_{\bomega}
	\right)\d t
	\\
	&\qquad- \int_{0}^{T}\left\langle \mathbb{G}\mathcal{F}(\bar{\bU}),\bpsi
	\right\rangle\d t
	\,+\int_{0}^{T}\left(
	G(\bar{\bA})\cdot\bpsi_{\ba}
	+H(\bar{\bOmega})\cdot\bpsi_{\bomega}
	\right)\d t
	\\
	&=-\int_{\Omega_F}
	\bu_0
	\cdot\bpsi(0)
	\, \d\by
	- \ba_0\cdot\bpsi_{\ba}(0)
	- \mathcal{J}{\bomega_0}\cdot\bpsi_{\bomega}(0)
	\end{split}
\end{equation*}
with
\begin{equation*}
\bar{\bu}(t,\bx) = \nabla \bX(t,\bY(t,\bx))\bar{\bU}(t,\bY(t,\bx)),
\quad
\bar{\ba}(t) = \Q(t)\bar{\bA}(t),
\quad
\bar{\bomega}(t) = \Q(t)\bar{\bOmega}(t).
\end{equation*}
By taking $\bpsi=h_i(t){\bPsi}_i$ and summing up over $i$ from $1$ to $m$ the equality holds for all test functions in $\mathbb{V}_m$ which is dense in $\mathbb{V}(\Omega_F)$. It is not difficult to show that the equation \eqref{weak_linear} is equivalent to \eqref{weak2} from Definition \ref{Linear} and therefore we call the function $\bar{\bU}$ a weak solution for \eqref{equation1}. Finally, the uniqueness of weak solution implies $\bar{\bU}=\bU\in L^2(0,T;H^2(\Omega))\cap H^1(0,T;L^2(\Omega))$.
Then, since $\bar{\bU}=\bar{\bA}+\bar{\bOmega}\times\by$ and ${\bU}={\bA}+{\bOmega}\times\by$ on $\overline{S_0}$, we conclude that $\bar{\bA}=\bA$ and $\bar{\bOmega}=\bOmega$ belong to $H^{1}(0,T)$.

\subsubsection{Estimates for time derivatives}
\label{Section:TimeDerivatives_estimates}

In this step we would like to show that
$$
\partial_t\widetilde{\bU},\partial_t{\bU}\in L^{\infty}(\varepsilon,T;L^2(\Omega))\cap L^{2}(\varepsilon,T;H^1(\Omega)).
$$ 
By \eqref{approx} solution $(\bU^m,\bA^m,\bOmega^m)$ satisfies
\begin{equation*}
	\int_{\Omega}\mathbb{G}\partial_{t}{\bU^m}
	\cdot{\bPsi}_i
	\, \d\by
	+ \left\langle \mathbb{G}\mathcal{F}(\bU^m),\bPsi_i\right\rangle 
	- G(\bA^m)\cdot\bPsi_i^{\ba}
	- H(\bOmega^m)\cdot\bPsi_i^{\bomega}
	= 0,
\end{equation*}
for $i=1,...m$.
This is a consequence of $\mathbb{G}=\I$ on $\overline{S_0}$ and equality
\begin{align*}
\int_{\Omega_F}\mathbb{G}\partial_{t}{\bU^m}
\cdot\bPsi_i
\, \d\by
+ \frac{\d}{\dt}\bA^m\cdot{\bPsi}_i^{\ba}
+ \frac{\d}{\dt}(\mathcal{J}{\bOmega^m})\cdot{\bPsi}_i^{\bomega}
&= \int_{\Omega_F}\mathbb{G}\partial_{t}{\bU^m}
\cdot\bPsi_i
\, \d\by 
+ \int_{S_0}\partial_{t}{\bU^m}
\cdot\bPsi_i
\, \d\by
\\
&= \int_{\Omega}\mathbb{G}\partial_{t}{\bU^m}
\cdot\bPsi_i
\, \d\by.
\end{align*}
We differentiate the equation in time
\begin{multline}
\int_{\Omega}\partial_t(\mathbb{G}\partial_{t}{\bU^m})
\cdot\bPsi_i
\, \d\by
+\left\langle \mathbb{\mathbb{G}}\mathcal{F}(\partial_t\bU^m),\bPsi_i\right\rangle 
+
\left\langle \partial_t(\mathbb{G}\mathcal{F})(\bU^m),\bPsi_i\right\rangle 
\\
- G\left(\frac{\d}{\dt}\bA^m\right)\cdot\bPsi_i^{\ba}
- H\left(\frac{\d}{\dt}\bOmega^m\right)\cdot\bPsi_i^{\bomega}
- 
\left(
G_1\left(\bA^m\right)\cdot\bPsi_i^{\ba}
+ H_1\left(\bOmega^m\right)\cdot\bPsi_i^{\bomega}
\right)
=0.
\end{multline}
We recall that  operators $G, H$ are defined by \eqref{rihgtGH}, and $G_1$ and $H_1$  by \eqref{G1H1}.
We multiply the above equation by $\frac{\d}{\dt}c_{im}$ and sum over $i$ form $1$ to $m$ to obtain
\begin{multline}	\label{equation_t1}
\int_{\Omega}\partial_t(\mathbb{G}\partial_{t}{\bU^m})
\cdot\partial_{t}{\bU^m}
\, \d\by
+\left\langle \mathbb{\mathbb{G}}\mathcal{F}(\partial_t\bU^m),\partial_{t}{\bU^m}\right\rangle 
+
\left\langle \partial_t(\mathbb{G}\mathcal{F})(\bU^m),\partial_{t}{\bU^m}\right\rangle 
\\
- G\left(\frac{\d}{\dt}\bA^m\right)\cdot\frac{\d}{\dt}\bA^m
- H\left(\frac{\d}{\dt}\bOmega^m\right)\cdot\frac{\d}{\dt}\bOmega^m
- 
\left(
G_1\left(\bA^m\right)\cdot\frac{\d}{\dt}\bA^m
+ H_1\left(\bOmega^m\right)\cdot\frac{\d}{\dt}\bOmega^m
\right)
=0.
\end{multline}
Then we integrate the equation on $[t_1,t_2]\subset(0,T]$ and estimate each term. For the first term we have
\begin{align*}
\int_{\Omega}\partial_t(\mathbb{G}\partial_{t}{\bU^m})
\cdot\partial_{t}{\bU^m}
\, \d\by
&=
\frac{1}{2}\frac{\d}{\dt}\int_{\Omega}\mathbb{G}\partial_{t}{\bU^m}
\cdot\partial_{t}{\bU^m}
\, \d\by
+\frac{1}{2}\int_{\Omega}\partial_{t}\mathbb{G}\,\partial_{t}{\bU^m}
\cdot\partial_{t}{\bU^m}
\, \d\by,
\end{align*}
\begin{align*}
\left| 
\int_{t_1}^{t_2}\int_{\Omega}\partial_{t}\mathbb{G}\,\partial_{t}{\bU^m}
\cdot\partial_{t}{\bU^m}
\, \d\by\d\tau
\right| 
&\leq\|\partial_t\mathbb{G}\|_{L^{\infty}(0,T;L^{\infty}(\Omega))}
\|\partial_t\bU^m\|_{L^2(t_1,t_2;L^2(\Omega))}^2,
\end{align*}
which implies
\begin{align*}
\int_{\Omega}\partial_t(\mathbb{G}\partial_{t}{\bU^m})
\cdot\partial_{t}{\bU^m}
\, \d\by
&\geq
\frac{1}{2}\frac{\d}{\dt}\|\nabla\bX\partial_{t}{\bU^m}\|_{L^2(\Omega)}^2 
- \frac{1}{2}\|\partial_t\mathbb{G}\|_{L^{\infty}(0,T;L^{\infty}(\Omega))}
\|\partial_t\bU^m\|_{L^2(t_1,t_2;L^2(\Omega_F))}^2.
\end{align*}
By the definition of $\left\langle \mathbb{G}\mathcal{F}(\bU),\bpsi\right\rangle $ and Lemma \ref{estimates_1} we get
\begin{multline*}
\left|\left\langle \mathbb{G}\mathcal{L}(\partial_t\bU^m),\partial_t\bU^m\right\rangle
-2\|\D(\partial_t\bU^m)\|_{L^2(\Omega)}^2
\right|\\
\leq 
\left(\|(g_{ik}g^{jl}-\delta_{ik}\delta_{jl})\|_{L^{\infty}(\Omega)} +\mu\right)\|\nabla\partial_t\bU^m\|_{L^2(\Omega_{F})}^2
+C\|\partial_t\bU^m\|_{L^2(\Omega)}^2
\end{multline*}
\begin{align*}
\left|
\int_{\Omega_F} \mathbb{G}\mathcal{M}(\partial_t\bU^m)\cdot\partial_t\bU^m\, \d\by
\right|
&\leq
\mu\|\nabla\partial_t\bU^m\|_{L^{2}(\Omega_{F})}^2
+C\|\partial_t\bU^m\|_{L^{2}(\Omega)}^2
\end{align*}
\begin{align*}
\left|\int_{\Omega_F} \mathbb{G}\mathcal{N}(\partial_t\bU^m)\cdot\partial_t\bU^m\, \d\by
\right|
&= \left|\int_{\Omega_F(t)} 
{\widetilde{\bu}}\cdot \nabla\bu_t^m \cdot \bu_t^m\, \d\by\right|
=
\left|\int_{\partial S(t)} 
\frac{1}{2}|\bu_t^m|^2\,{\widetilde{\bu}}\cdot \bn\, \d\gamma(\bx)
\right|
\\
&\leq
C\left|\int_{\partial S_0} 
\frac{1}{2}|\partial_t\bU^m|^2\,{\widetilde{\bU}}\cdot \bN\, \d\gamma(\by)
\right|
\\
&\leq C (|\widetilde{\bA}|+|\widetilde{\bOmega}|)
\left(\left|\frac{\d}{\dt}\bA^m\right|^2
+\left|\frac{\d}{\dt}\bOmega^m\right|^2\right)
\\
&\leq C \|\widetilde{\bU}\|_{L^{2}(\Omega)}
\|\partial_t\bU^m\|_{L^{2}(\Omega)}^2
\end{align*}
for arbitrary $\mu>0$ and $\bu_t^m=\nabla\bX\partial_t\bU^m$. Hence, for the second term in \eqref{equation_t1} we obtain
\begin{align*}
&\left|
\int_{t_1}^{t_2}\left\langle \mathbb{G}\mathcal{F}(\partial_t\bU^m),\partial_{t}{\bU^m}\right\rangle \d\tau
- \int_{t_1}^{t_2}2\|\D\partial_t^l\bU^m\|_{L^2(\Omega_F)}^2\,\d\tau
\right|
\\
&\qquad\leq 
\left(\|(g_{ik}g^{jl}-\delta_{ik}\delta_{jl})\|_{L^{\infty}(t_1,t_2;L^{\infty}(\Omega))}+2\mu\right)\int_{t_1}^{t_2}
\|\nabla\partial_t\bU^m\|_{L^2(\Omega_{F})}^2
\,\d\tau
+ C\int_{t_1}^{t_2}\|\partial_t\bU^m\|_{L^2(\Omega)}^2\,\d\tau.
\end{align*}
where constant $C>0$ depends on $T$ non-decreasingly.

Now, for the third term we compute
\begin{align*}
\left|\left\langle \partial_t(\mathbb{G}\mathcal{L})(\bU^m),\partial_t\bU^m\right\rangle
\right|
&\leq
\mu\|\nabla\partial_t\bU^m\|_{L^2(\Omega_{F})}^2
+C\|\partial_t\bU^m\|_{L^2(\Omega)}^2
+C\|\bU^m\|_{H^1(\Omega_{F})}^2
\end{align*}
\begin{align*}
\left|
\int_{\Omega_F} \partial_t(\mathbb{G}\mathcal{M})(\bU^m)\cdot\partial_t\bU^m\, \d\by
\right|
&\leq
C\|\partial_t\bU^m\|_{L^{2}(\Omega)}^2
+
\left(\|\widetilde{\bA}\|_{W^{1,\infty}(t_1,t_2)}
+ \|\widetilde{\bOmega}\|_{W^{1,\infty}(t_1,t_2)} \right)^2
\|\bU^m\|_{H^{1}(\Omega_{F})}^2
\end{align*}
\begin{equation*}
\left|\int_{\Omega_F} \mathbb{G}(\partial_t\mathcal{N})(\bU^m)\cdot\partial_t\bU^m\, \d\by
\right|
\leq \|\mathbb{G}\|_{L^{\infty}(\Omega)} 
\left(
\left|\int_{\Omega_F} 
\partial_t{\widetilde{\bU}}\cdot \nabla\bU^m \cdot \partial_t\bU^m\, \d\by\right|
+\right.
\end{equation*}
$$\left.\left|\int_{\Omega_F} \sum_{i,j,k=1}^n \partial_t(\Gamma ^i_{jk}\widetilde{\bU}_j)\bU_k^m\partial_t\bU_i^m\, \d\by
\right|
\right)
$$
\begin{align*}
\left|\int_{\Omega_F} 
\partial_t{\widetilde{\bU}}\cdot \nabla\bU^m \cdot \partial_t\bU^m\, \d\by\right|
&\leq \|\partial_t\widetilde{\bU}\|_{L^{4}(\Omega_F)}
\|\nabla\bU^m\|_{L^{2}(\Omega_F)}
\|\partial_t\bU^m\|_{L^{4}(\Omega_F)}
\\
&\leq C\|\partial_t\widetilde{\bU}\|_{L^{4}(\Omega_F)}^2
\|\partial_t\bU^m\|_{L^{4}(\Omega_F)}^2
+ \|\nabla\bU^m\|_{L^{2}(\Omega_F)}^2
\\
&\leq C\|\partial_t\widetilde{\bU}\|_{L^{4}(\Omega_F)}^2
\|\partial_t\bU^m\|_{L^{2}(\Omega_F)}^{\frac{3}{2}}
\|\nabla\partial_t\bU^m\|_{L^{2}(\Omega_F)}^{\frac{1}{2}}
+ \|\nabla\bU^m\|_{L^{2}(\Omega_F)}^2
\\
&\leq C\|\partial_t\widetilde{\bU}\|_{L^{4}(\Omega_F)}^8
\|\partial_t\bU^m\|_{L^{2}(\Omega_F)}^{2}
+ \mu\|\nabla\partial_t\bU^m\|_{L^{2}(\Omega_F)}^{2}
+ \|\nabla\bU^m\|_{L^{2}(\Omega_F)}^2.
\end{align*}
Hence,
\begin{multline*}
\left|
\int_{t_1}^{t_2}\left\langle \partial_{t}(\mathbb{G}\mathcal{F})(\bU^m),\partial_{t}{\bU^m}\right\rangle \d\tau
\right|
\\
\leq 
\mu\int_{t_1}^{t_2}
\|\nabla\partial_t\bU^m\|_{L^2(\Omega_{F})}^2
\,\d\tau
+ C\int_{t_1}^{t_2}\|\partial_t\bU^m\|_{L^2(\Omega)}^2\,\d\tau
+ C
\|\bU^m\|_{L^2(t_1,t_2;H^1(\Omega_{F}))}^2,
\end{multline*}
for arbitrary $\mu>0$, where constant $C>0$ depends on $T$ non-decreasingly, since $\partial_t\widetilde{\bU}\in L^8(\varepsilon,T;L^4(\Omega)),$ $\forall \varepsilon>0$ by \eqref{strong_p} and since
$\frac{\d}{\dt}\widetilde{\bA}, \frac{\d}{\dt}\widetilde{\bOmega}\in L^{\infty}(0,T)$ by the assumption of Proposition \ref{time_derivatives}. Here we emhasize that this assumption was necessary for bounding term involving $\partial_t(\mathbb{G}\mathcal{M})(\bU^m)$.

Note that 
$$
\|\nabla\bX\bpsi\|_{L^2(\Omega)}\geq \frac{1}{\|\nabla\bY\|_{L^{\infty}(0,T;L^{\infty}(\Omega))}}\|\bpsi\|_{L^2(\Omega)},
$$
since $\nabla\bY\nabla\bX=\I$, so all together, we get
\begin{align*}
&\|\partial_t\bU^m\|_{L^{2}(\Omega)}^2(t_2)
+ 4\|\D\partial_t\bU^m\|_{L^2(t_1,t_2;L^2(\Omega_F))}^2
\\
&\quad\leq
\|\partial_t\bU^m\|_{L^{2}(\Omega)}^2(t_1)
+ C\int_{t_1}^{t_2}\|\partial_t\bU^m\|_{L^{2}(\Omega)}^2\,\d\tau
\\
&\quad\qquad
+ C\left(\|(g_{ik}g^{jl}-\delta_{ik}\delta_{jl})\|_{L^{\infty}(0,T;L^{\infty}(\Omega))}+\mu\right)\int_{t_1}^{t_2}\|\nabla\partial_t^l\bU^m\|_{L^2(\Omega_F)}^2\,\d\tau
+ C
\|\bU^m\|_{L^2(t_1,t_2; H^1(\Omega_F))}^2
\end{align*}
where $\|(g_{ik}g^{jl}-\delta_{ik}\delta_{jl})\|_{L^{\infty}(0,T;L^{\infty}(\Omega))}$ is small for small $T$, by Lemma \ref{estimates_1}.

Now,
we take sufficiently small $T$, integrate the inequality on $t_1\in(\varepsilon,t_2)$
and by Gronwall's Lemma we find
\begin{align*}
&\|\partial_t\bU^m(t)\|_{L^2(\Omega)}^2
+ C\int_{\varepsilon}^{t}\|\D\partial_t\bU^m\|_{L^2(\Omega_F)}^2\,\d\tau
\leq M,
\end{align*}
for all $t\in(\varepsilon,t)$,
where constant $M>0$ depends on the norms 
$\|\partial_t\widetilde{\bU}\|_{L^{8}(\varepsilon,T;L^4(\Omega))}$,
$\|\widetilde{\bU}\|_{L^{2}(\varepsilon,T;H^1(\Omega))}$, 
$\|\bU^m\|_{L^{2}(0,T;H^1(\Omega))}$, 
$\|\widetilde{\bA}\|_{W^{1,\infty}(0,T)}$
and 
$\|\widetilde{\bOmega}\|_{W^{1,\infty}(0,T)}$.
Finally, since
$\|\bU^m\|_{L^{2}(0,T;H^1(\Omega))}$ is bounded,
on the limit we get that $\partial_t\bU\in L^2(\varepsilon,T;H^1(\Omega))\cap L^{\infty}(\varepsilon,T;L^2(\Omega))$.

\subsubsection{Uniqueness for time-derivative system}
Now we are able to show that $\bU_1^{*}=t\partial_t\bU$.
$(\bU, P)$ satisfies the following weak formulation
\begin{equation}	\label{equation_1}
\begin{split}
&\int_{\Omega_F}\partial_{t}\mathbb{G}\, \partial_{t}\bU
\cdot\bpsi
\, \d\by
-\int_{\Omega_F} \partial_{t}\mathbb{G}\,
\mathcal{F}(\bU)
\cdot \bpsi
\, \d\by
+ \int_{\Omega_F}\partial_{t}\mathbb{G}\,\mathcal{G}(P)
\cdot\bpsi
\, \d\by
= 0
\end{split}
\end{equation}
for all $\bpsi\in V(0)$, 
and $(\bU^{*}, P^{*},\bA^{*},\bOmega^{*})$ satisfies
\begin{equation}	\label{equation_2}
\begin{split}
&\int_{\Omega}\mathbb{G}\partial_{t}{\bU_1^{*}}
\cdot\bpsi
\, \d\by
+ \left\langle \mathbb{G}\mathcal{F}(\bU_1^{*}),\bpsi\right\rangle  
+ t\left(
\left\langle \mathbb{G}\mathcal{F}_1 (\bU),\bpsi\right\rangle 
+\int_{\Omega_F}\mathbb{G}\mathcal{G}_1(P)
\cdot\bpsi
\, \d\by
\right)
\\
&\qquad
- G(\bA_1^{*})\cdot\bpsi_{\ba}
- H(\bOmega_1^{*})\cdot\bpsi_{\bomega}
- t\left(
G_1\left(\bA\right)\cdot\bpsi_{\ba}
+ H_1\left(\bOmega\right)\cdot\bpsi_{\bomega}
\right)\\
&\qquad
-\int_{\Omega_F}\mathbb{G}\partial_{t}{\bU}
\cdot\bpsi\,\d\by
- \frac{\d}{\dt}\bA\cdot\bpsi_{\ba}
- \mathcal{J}\left(\frac{\d}{\dt}\bOmega\right)\cdot\bpsi_{\bomega}
=0,
\end{split}
\end{equation}
where
\begin{equation*}
\left\langle \mathbb{G}\mathcal{F}_1 (\bU),\bpsi\right\rangle 
= -\int_{\Omega_F}\mathbb{G}\mathcal{F}_1(\bU)\cdot\bpsi\,\d\by
\end{equation*}
with
$
\mathcal{F}_1 = \mathcal{L}_1 - \mathcal{M}_1 - \mathcal{N}_1
$
and operators $\mathcal{L}_1$, $\mathcal{M}_1$, $\mathcal{N}_1$, $\mathcal{G}_1$ are defined by \eqref{f2}-\eqref{f6}.
For $j\in\N$ and $(\bpsi,\bpsi_{\ba},\bpsi_{\bomega})=h(t)({\bPsi}_j,{\bPsi}_j^{\ba},{\bPsi}_j^{\bomega})$
we have
\begin{equation}	\label{equation_3}
\begin{split}
&\int_{\Omega}\mathbb{G}\partial_{t}{\partial_t\bU^m}
\cdot\bpsi
\, \d\by
+ \int_{\Omega_F}\partial_t\mathbb{G}\,{\partial_t\bU^m}
\cdot\bpsi
\, \d\by
\\
&\qquad
+ \left\langle \mathbb{\mathbb{G}}\mathcal{F}(\partial_t\bU^m),\bpsi\right\rangle 
+ \left\langle \mathbb{G}\mathcal{F}_1(\bU^m),\bpsi\right\rangle 
+ \left\langle \partial_t \mathbb{G}\,\mathcal{F}(\bU^m),\bpsi\right\rangle 
\\
&\qquad
- G\left(\frac{\d}{\dt}\bA^m\right)\cdot\bpsi_{\ba}
- H\left(\frac{\d}{\dt}\bOmega^m\right)\cdot\bpsi_{\bomega}
- 
\left(
G_1\left(\bA^m\right)\cdot\bpsi_{\ba}
+ H_1\left(\bOmega^m\right)\cdot\bpsi_{\bomega}
\right)
=0,
\end{split}
\end{equation}
where
\begin{equation*}
\left\langle \partial_t \mathbb{G}\,\mathcal{F}(\bU),\bpsi\right\rangle = -\int_{\Omega_F}\partial_t\mathbb{G}\mathcal{F}(\bU)\cdot\bpsi\,\d\by.
\end{equation*}
Note that the second row in \eqref{equation_3} is the time derivative of $\left\langle \mathbb{G}\mathcal{F}(\bU^m),\bpsi\right\rangle$, for time independent $\bpsi$, which holds from \eqref{weakF_2} since $\mathbb{G}=\I$ on $\overline{S_0}$ and $\bU^m$ is regular enough.
We multiply the above equation by $t$ and subtract from the previous one with
\begin{equation*}
	(\widehat{\bU}_1^m,\widehat{\bA}_1^m,\widehat{\bOmega}_1^m)
	= 
	\left(
	\bU_1^{*}-t\partial_t\bU^m,\,
	\bA_1^{*}-t\frac{\d}{\dt}\bA^m,\,
	\bOmega_1^{*}-t\frac{\d}{\dt}\bOmega^m
	\right).
\end{equation*}
Then, by using \eqref{equation_1}, we obtain
\begin{equation*}
\begin{split}
&\int_{\Omega}\mathbb{G}\partial_{t}\widehat{\bU}_1^m
\cdot\bpsi
\, \d\by
+ t\int_{\Omega_F}\partial_t\mathbb{G}\,
{\partial_t({\bU}-{\bU^m})}
\cdot\bpsi
\, \d\by
\\
&\qquad
- \left\langle \mathbb{G}\mathcal{F}(\widehat{\bU}_1^m),\bpsi\right\rangle  
- t\left(
	\left\langle \mathbb{G}\mathcal{F}_1 ({\bU}-{\bU^m}),\bpsi\right\rangle
	+ \left\langle \partial_t \mathbb{G}\,\mathcal{F}({\bU}-{\bU^m}),\bpsi\right\rangle  
	\right)
\\
&\qquad
{
	- t
	\int_{\Omega_F}\mathbb{G}\mathcal{G}_1(P)
	\cdot\bpsi
	\, \d\by}
-t
{
	\int_{\Omega_F}\partial_{t}\mathbb{G}\,\mathcal{G}(P)
	\cdot\bpsi
	\, \d\by}
\\
&\qquad
- G(\widehat{\bA}^m)\cdot\bpsi_{\ba}
- H(\widehat{\bOmega}^m)\cdot\bpsi_{\bomega}
\\
&\qquad
- 
{t\left(
	G_1\left(\bA-\bA^m\right)\cdot\bpsi_{\ba}
	+ H_1\left(\bOmega-\bOmega^m\right)\cdot\bpsi_{\bomega}
	\right)}
\\
&\qquad
-\int_{\Omega_F}\mathbb{G}\partial_{t}({\bU}-{\bU^m})
\cdot\bpsi
\,\d\by
- \left(\frac{\d}{\dt}(\bA-\bA^m)\right)\cdot\bpsi_{\ba}
- \left(\frac{\d}{\dt}(\bOmega-\bOmega^m)\right)\cdot\bpsi_{\bomega}.
=0
\end{split}
\end{equation*}
Since
\begin{align*}
\mathbb{G}\mathcal{G}_1(P)
+
\partial_{t}\mathbb{G}\,\mathcal{G}(P)
&= \mathbb{G}\partial_t\mathbb{G}^{-1}\nabla P
+
\partial_{t}\mathbb{G}\mathbb{G}^{-1}\nabla P
=\partial_t(\mathbb{G}\mathbb{G}^{-1})\nabla P
=0,
\end{align*}
terms with the pressure cancels,
and after integrating above equation on $(0,t)$, we get
\begin{equation*}
\begin{split}
&
\int_{\Omega}\mathbb{G}(t)\widehat{\bU}_1^m(t)
\cdot\bpsi(t)
\, \d\by
-\int_{0}^{t}\int_{\Omega}\mathbb{G}\widehat{\bU}_1^m
\cdot\partial_{t}\bpsi
\, \d\by\d\tau
-\int_{0}^{t}\int_{\Omega_F}\partial_t\mathbb{G}\widehat{\bU}_1^m
\cdot\partial_{t}\bpsi
\, \d\by\d\tau
\\
&\qquad
+ t
\int_{0}^{t}\int_{\Omega_F}\partial_t\mathbb{G}\,{\partial_t(\bU-\bU^m)}
\cdot\bpsi
\, \d\by
\,\d\tau
\\
&\qquad
- \int_{0}^{t}
\left\langle \mathbb{G}\mathcal{F}(\widehat{\bU}_1^m),\bpsi\right\rangle  
\,\d\tau
- t\int_{0}^{t}\left(
	\left\langle \mathbb{G}\mathcal{F}_1 (\bU-\bU^m),\bpsi\right\rangle
	+ \left\langle \partial_t \mathbb{G}\,\mathcal{F}(\bU-\bU^m),\bpsi\right\rangle  
	\right)\,\d\tau
\\
&\qquad
- \int_{0}^{t}\left(
G(\widehat{\bA}_1^m)\cdot\bpsi_{\ba}
+ H(\widehat{\bOmega}_1^m)\cdot\bpsi_{\bomega}
\right)\,\d\tau
\\
&\qquad
- 
{t\int_{0}^{t}\left(
	G_1\left(\bA-\bA^m\right)\cdot\bpsi_{\ba}
	+ H_1\left(\bOmega-\bOmega^m\right)\cdot\bpsi_{\bomega}
	\right)\,\d\tau}
\\
&\qquad
-\int_{0}^{t}\int_{\Omega_F}\mathbb{G}\partial_{t}({\bU}-{\bU^m})
\cdot\bpsi
\,\d\by\,\d\tau
- \int_{0}^{t}\left(\frac{\d}{\dt}(\bA-\bA^m)\right)\cdot\bpsi_{\ba}\,\d\tau
- \int_{0}^{t}\left(\frac{\d}{\dt}(\bOmega-\bOmega^m)\right)\cdot\bpsi_{\bomega}\,\d\tau
=0.
\end{split}
\end{equation*}
We let $m\to\infty$ and obtain the equation
\begin{equation*}
\begin{split}
&
\int_{\Omega}\mathbb{G}(t)\widehat{\bU}_1(t)
\cdot\bpsi(t)
\, \d\by
-\int_{0}^{t}\int_{\Omega}\widehat{\bU}_1
\cdot\partial_{t}(\mathbb{G}\bpsi)
\, \d\by\d\tau
\\
&\qquad
- \int_{0}^{t}
\left\langle \mathbb{G}\mathcal{F}(\widehat{\bU}_1),\bpsi\right\rangle\,\d\tau
- \int_{0}^{t}\left(
G(\widehat{\bA}_1)\cdot\bpsi_{\ba}
+ H(\widehat{\bOmega}_1)\cdot\bpsi_{\bomega}
\right)\,\d\tau
=0,
\end{split}
\end{equation*}
where
$$
(\widehat{\bU}_1,\widehat{\bA}_1,\widehat{\bOmega}_1)
= \left(
\bU_1^{*}-t\partial\bU,\bA_1^{*}-\frac{\d}{\dt}\bA,\bOmega_1^{*}-t\frac{\d}{\dt}\bOmega
\right)
$$
By linearity and density, the above equality holds for all $(\bpsi,\bpsi_{\ba},\bpsi_{\bomega})\in\mathbb{V}$.
Then we substitute
$$
(\bpsi,\bpsi_{\ba},\bpsi_{\bomega})
=
(\widehat{\bU}_1,\widehat{\bA}_1,\widehat{\bOmega}_1)
$$
and get the equality
\begin{multline*}
\frac{1}{2}
\left\|
\nabla\bX\widehat{\bU}_1(t)
\right\|_{L^2(\Omega)}^2 
- \int_{0}^{t}\int_{\Omega}
\nabla\bX^T\partial_t\nabla\bX\widehat{\bU}_1\cdot\widehat{\bU}_1\,\d\bx\d\tau
\\
- \int_{0}^{t}
\left\langle \mathbb{G}\mathcal{F}(\widehat{\bU}_1),\widehat{\bU}_1\right\rangle\,\d\tau
- \int_{0}^{t}\left(
G(\widehat{\bA}_1)\cdot\widehat{\bA}_1
+ H(\widehat{\bOmega}_1)\cdot\widehat{\bOmega}_1
\right)\,\d\tau
=0.
\end{multline*}
Now, as before, we can get the estimate
\begin{equation*}
\begin{split}
&\left\|
\widehat{\bU}_1(t)
\right\|_{L^2(\Omega)}^2
+ C_1\int_{0}^{t}\int_{\Omega_F}
|\D\widehat{\bU}_1|^2
\, \d\by\d\tau
\leq 
\int_{0}^{t}C_2\left\|
\widehat{\bU}_1(\tau)
\right\|_{L^2(\Omega)}^2
\d\tau
+\mu\int_{0}^{t}\int_{\Omega_F}
|\D\widehat{\bU}_1|^2
\, \d\by\d\tau
\end{split}
\end{equation*}
for all $\mu>0$ and for sufficiently small $\mu$ we get
\begin{equation*}
\begin{split}
&\left\|
\widehat{\bU}_1(t)
\right\|_{L^2(\Omega)}^2
\leq 
\int_{0}^{t}C\left\|
\widehat{\bU}_1(\tau)
\right\|_{L^2(\Omega)}^2
\d\tau.
\end{split}
\end{equation*}	
Finally, Gronwall's Lemma implies $
\widehat{\bU}_1=0$ which means that $\bU_1^{*}=t\partial_t\bU$.
Then the equations $\eqref{equation1}_1$	and $\eqref{equation2}_1$ give $\nabla P_1^{*}=t\nabla \partial_t P$.
Moreover, since $\bU_1^{*}=\bA_1^{*}+ \bOmega_1^{*}\times\by$ and $\partial_t\bU=\frac{\d}{\dt}\bA+ \frac{\d}{\dt}\bOmega \times\by$ on $\overline{S_0}$, it follows that $\bA_1^{*}=t\frac{\d}{\dt}\bA$ and $\bOmega_1^{*}=t\frac{\d}{\dt}\bOmega$.

\section{Spatial derivatives estimates}
\label{Section:spatial_derivatives}
Let $(\widetilde{\bU},\widetilde{P},\widehat{\bA},\widetilde{\bOmega})$ be a weak solution satisfying the assumption of Lemma \ref{strong}. We want to show that
\begin{equation*}
\begin{split}
&\partial_t^l\widetilde{\bU}\in L^2(\varepsilon,T;H^{k}(\Omega_F)),
\\
&\partial_t^l\widetilde{P}\in L^2(\varepsilon,T;{\quotient{H^{k-1}(\Omega_F)}{\R}}),
\end{split}
\end{equation*}
for all $l\geq 0, k\geq 2$. The case $k=2$ is exactly the statement of Proposition \ref{time_derivatives}.

As in previous sections, we consider a linear problem on cylindrical domain \eqref{FixedDomain_2} and follow the proof for the Navier-Stokes case (see eg. \cite[Section 5]{GaldiNS00}).
Let $k\geq 2$ and let us assume that solution $(\bU,P,\bA,\bOmega)$ to the system \eqref{FixedDomain_2} satisfies
\begin{equation*}
\partial_t^l\bU\in L^{2}(\varepsilon,T;H^{k}(\Omega_{F})),
\quad
\partial_t^l P\in L^{2}(\varepsilon,T;{\quotient{H^{k-1}(\Omega_F)}{\R}}),
\quad
\frac{\d^{l}}{\dt^{l}}\bA,\frac{\d^{l}}{\dt^{l}}\bOmega\in L^{2}(\varepsilon,T),
\quad\forall l\geq 0,\,\forall\varepsilon>0,
\end{equation*}
and by uniqueness
\begin{equation*}
\partial_t^l\widetilde{\bU}\in L^{2}(\varepsilon,T;H^{k}(\Omega_{F})),
\quad
\partial_t^l \widetilde{P}\in L^{2}(\varepsilon,T;{\quotient{H^{k-1}(\Omega_F)}{\R}}
),
\quad
\frac{\d^{l}}{\dt^{l}}\widetilde{\bA},\frac{\d^{l}}{\dt^{l}}\widetilde{\bOmega}\in L^{2}(\varepsilon,T),
\quad\forall l\geq 0,\,\forall\varepsilon>0,
\end{equation*}
which by Sobolev embeddings means that
\begin{equation*}
\bU,\widetilde{\bU}\in C^{\infty}((0,T];H^{k}(\Omega_{F})),
\quad
P,\widetilde{P}\in C^{\infty}((0,T];{\quotient{H^{k-1}(\Omega_F)}{\R}}),
\quad
\bA,\bOmega, \widetilde{\bA},\widetilde{\bOmega}\in C^{\infty}(0,T],
\end{equation*}
We want to show that
\begin{equation*}
\partial_t^l\bU(t)\in H^{k+1}(\Omega_{F}),
\quad
\partial_t^l P(t)\in {\quotient{H^{k}(\Omega_F)}{\R}},
\quad\forall l\geq 0,\,\forall t\in(0,T].
\end{equation*}
The solution for \eqref{FixedDomain_2} satisfies the following system
\begin{equation}	\label{FixedDomain_5}
\begin{array}{l}
\left.
\begin{split}
\triangle {\partial_t^l\bU} &= 
\partial_{t}^{l+1}{\bU} - {F}(\partial_t^l\bU,\partial_t^l P)
- {F}_l(\bU, P)
+ \nabla {\partial_t^l P}, 
\\
\divg{\partial_t^l\bU} &= 0
\end{split}
\,\right\} \;\mathrm{in}\;(0,T]\times\Omega_{F},
\\
{\partial_t^l\bU} = {\frac{\d^{l}}{\dt^{l}}\bA} + {\frac{\d^{l}}{\dt^{l}}\bOmega}\times\by \qquad \mathrm{on}\;(0,T]\times\partial S_0,
\\
{\bU} = 0\qquad \mathrm{on}\;(0,T]\times\partial\Omega,
\end{array}
\end{equation}
for all $l\geq 0$, where operators $F$ and $F_l$ are defined by \eqref{rightF} and \eqref{f2}-\eqref{f6} respectively, and if we define $F_0(\bU,P)=0$.

The idea is to use the following well known result for the steady Stokes system (see \cite[Lemma 5.2]{GaldiNS00}).
\begin{lemma}	\label{Stokes}
	Let $\Omega$ be a bounded domain of $\R^n$, of class $C^{k+2}$. For any $F\in W^{k,q}(\Omega)$, there exists one and only one solution $(\bU,P)$ to the following Stokes problem
    \begin{equation}	\label{SteadyStokes}
	\begin{array}{ll}
	\left.
	\begin{array}{l}
	-\triangle \bU  = 
	F + \nabla P, \\
	\divg{\bU} = 0
	\end{array}
	\right\} &\mathrm{in}\;\Omega,
	\\
	\qquad
	\bU = 0 &\mathrm{on}\;\partial \Omega,
	\end{array}
    \end{equation}
    such that
    \begin{equation*}
	\bU\in W^{k+2,q}(\Omega),
	\qquad P\in W^{k+1,q}(\Omega)
    \end{equation*}
    and 
    $$
	\int_{\Omega} P\,\d\by = 0.
    $$
    This solution satisfies the estimate:
    \begin{equation}
	\|\bU\|_{W^{k+2,q}(\Omega)} + \|P\|_{W^{k+1,q}(\Omega)} \leq C\|F\|_{W^{k,q}(\Omega)}
    \end{equation}
\end{lemma}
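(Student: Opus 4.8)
The statement is the classical Cattabriga--Agmon--Douglis--Nirenberg regularity theory for the stationary Stokes system, so the plan is to first construct a unique weak solution in the energy space and then bootstrap it up to $W^{k+2,q}\times W^{k+1,q}$ by elliptic a priori estimates together with a localization argument. \emph{Step 1 (weak solvability).} In the Hilbert setting ($q=2$, $k=0$) I would work in the space of divergence-free fields in $H^1_0(\Omega)^n$ and solve the variational problem of finding $\bU$ such that $\int_\Omega \nabla\bU:\nabla\bphi = \langle F,\bphi\rangle$ for all divergence-free $\bphi\in H^1_0(\Omega)^n$. Coercivity of the Dirichlet form (Poincar\'e) plus Lax--Milgram gives a unique $\bU$. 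The pressure is then recovered via de Rham's theorem: the linear functional $\bphi\mapsto \int_\Omega\nabla\bU:\nabla\bphi-\langle F,\bphi\rangle$ vanishes on divergence-free fields, hence equals $-\int_\Omega P\,\divg\bphi$ for some $P\in L^2(\Omega)$; surjectivity of $\divg:H^1_0(\Omega)^n\to L^2(\Omega)/\R$ (the LBB/Bogovski\u{\i} inf-sup condition on the bounded domain $\Omega$) guarantees that $P$ exists and is unique up to a constant, which I fix by imposing $\int_\Omega P=0$.

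\emph{Step 2 (interior regularity and the pressure equation).} The key structural observation is that the Stokes system decouples through the pressure: taking the divergence of $-\triangle\bU=F+\nabla P$ and using $\divg\bU=0$ yields $\triangle P=-\divg F$. Since $F\in W^{k,q}(\Omega)$ one has $\divg F\in W^{k-1,q}(\Omega)$, so interior Calder\'on--Zygmund estimates for the Laplacian give $P\in W^{k+1,q}_{\mathrm{loc}}$. Feeding this back, each component of $\bU$ solves a scalar Poisson equation $-\triangle\bU_i=(F+\nabla P)_i$ with right-hand side in $W^{k,q}_{\mathrm{loc}}$, whence $\bU\in W^{k+2,q}_{\mathrm{loc}}$. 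On compactly contained subdomains I would implement this cleanly either by the difference-quotient method or by freezing the (trivial, constant) principal part and invoking the whole-space $L^q$ multiplier estimates, obtaining interior estimates controlled by $\|F\|_{W^{k,q}}$ and lower-order norms of $(\bU,P)$.

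\emph{Step 3 (boundary regularity).} This is the main obstacle, because the pressure carries no boundary condition, so the naive decoupling of Step 2 fails near $\partial\Omega$ and the Stokes system must be treated as a genuinely coupled Douglis--Nirenberg elliptic system. The plan is to cover $\partial\Omega$ by finitely many charts, flatten the boundary with $C^{k+2}$ diffeomorphisms (using exactly the regularity hypothesis on $\Omega$), and transform the system into a constant-coefficient Stokes operator in a half-space plus lower-order perturbations with coefficients controlled in $C^{k+1}$. The Stokes system with the Dirichlet condition $\bU=0$ satisfies the complementing (Lopatinski\u{\i}--Shapiro) condition, so the Agmon--Douglis--Nirenberg a priori estimates for the half-space problem apply and deliver local $W^{k+2,q}\times W^{k+1,q}$ bounds near the boundary; verifying this complementing condition and handling the variable-coefficient perturbation by a freezing/absorption argument is the technical heart.

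\emph{Step 4 (globalization).} Finally I would glue the interior and boundary estimates with a subordinate partition of unity, which produces a global bound of the form $\|\bU\|_{W^{k+2,q}}+\|P\|_{W^{k+1,q}}\le C(\|F\|_{W^{k,q}}+\|\bU\|_{W^{k+1,q}}+\|P\|_{W^{k,q}})$. The lower-order terms on the right are then absorbed either by a standard interpolation-plus-compactness (Ehrling) argument, using that $(\bU,P)$ is the \emph{unique} solution so the kernel is trivial, or by iterating the estimate from the base case established in Step 1. Uniqueness of the full solution follows from the energy identity for the difference of two solutions (with the normalization $\int_\Omega P=0$ removing the pressure ambiguity), completing the proof.
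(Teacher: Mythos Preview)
Your outline is a correct sketch of the classical Cattabriga/Agmon--Douglis--Nirenberg argument, and nothing in it is wrong. However, you should be aware that the paper does not actually prove this lemma: it is stated as a ``well known result for the steady Stokes system'' and is simply cited from \cite[Lemma 5.2]{GaldiNS00} without proof. So there is no ``paper's own proof'' to compare against; the authors treat this as a black-box tool from the literature and invoke it directly in the bootstrapping argument of Section~\ref{Section:spatial_derivatives}. Your Steps 1--4 are essentially the contents of that cited reference, and reproducing them here goes well beyond what the paper requires.
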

Therefore, first for $l=0$ by Lemma \ref{Stokes} and fixed point argument we will obtain that
$$
({\bU},{P})(t)\in
H^{k+1}(\Omega_{F})
\times {(\quotient{H^{k}(\Omega_F)}{\R})}, 
\qquad\forall t\in(0,T],
$$
and by uniqueness
$$
(\widetilde{\bU},\widetilde{P})(t)\in
H^{k+1}(\Omega_{F})
\times {(\quotient{H^{k}(\Omega_F)}{\R})}, 
\qquad\forall t\in(0,T].
$$
Then for $l\geq 1$ if we assume that
$$
(\partial_r\bU,\partial_r {P})(t)\in
H^{k+1}(\Omega_{F})
\times {(\quotient{H^{k}(\Omega_F)}{\R})}, 
\qquad \forall t\in(0,T],\forall r\leq l-1
$$
we will conclude that
$$
(\partial_l\bU,\partial_l {P})(t)\in
H^{k+1}(\Omega_{F})
\times {(\quotient{H^{k}(\Omega_F)}{\R})}, 
\qquad \forall t\in(0,T].
$$

First we define a smooth
divergence-free extension of the rigid velocity 
\begin{equation*}
b_{\bA,\bOmega}(t,y) = \operatorname{Ext}(\bA(t)+\bOmega(t)\times\by).
\end{equation*}
Operator $\operatorname{Ext}(\cdot)$ extends a function from solid domain $S_0$ to the domain $\Omega$ such that it preserves regularity of function and the divergence-free property. The construction of the operator can be found in \cite[Appendix A.1]{BorisSarkaAna2020weak}. 
Since $\divg b_{\bA,\bOmega} = 0$, functions
$
\bar{\bU}_l = \partial_t^l\bU - \partial_t^{l}b_{\bA,\bOmega}
$
and
$
\bar{P}_l=\partial_t^l P
$ 
satisfy
\begin{equation}	\label{FixedDomain_6}
\begin{array}{ll}
\left.
\begin{split}
\triangle \bar{\bU}_l  &= 
- {F}(\bar{\bU}_l,\bar{P}_l)
- F_l(\bU,P)
+ \partial _{t}^{l+1}{\bU}
-\mathcal{F}(\partial_t^{l}b_{\bA,\bOmega})
+ \nabla \bar{P}_l
\\
\divg \bar{\bU}_l &= 0
\end{split}
\,\right\} \;&\mathrm{in}\;(0,T]\times\Omega_{F}
\\
\quad\,\,\,\,
\bar{\bU}_l = 0 &\mathrm{on}\;(0,T]\times\partial \Omega_{F},
\end{array}
\end{equation}
for all $l\geq 0$, where
\begin{equation}\label{OpF}
\mathcal{F}(\bU)=\mathcal{L}(\bU)-\mathcal{M}(\bU)-\mathcal{N}(\bU),
\end{equation}
and $\mathcal{L}$, $\mathcal{M}$, $\mathcal{N}$ are defined by \eqref{OpL}, \eqref{OpM}, \eqref{OpN} respectively.
Now, for $l\geq 0$, we use fixed point argument, and consider the following problem 
\begin{equation}	\label{FixedDomain_7}
\begin{array}{ll}
\left.
\begin{split}
\triangle \bar{\bU}_l  &= 
- {F}(\widehat{\bU},\widehat{P}) 
- F_l(\bU,P)
+ \partial _{t}^{l+1}{\bU}
-\mathcal{F}(\partial_t^{l}b_{\bA,\bOmega})
+ \nabla \bar{P}_l
\\
\divg \bar{\bU}_l &= 0
\end{split}
\,\right\} \;&\mathrm{in}\;(0,T]\times\Omega_{F},
\\
\quad\,\,\,\,
\bar{\bU}_l = 0 \qquad
&\mathrm{on}\;(0,T]\times\partial \Omega_{F},
\end{array}
\end{equation}
with
$$
(\widehat{\bU},\widehat{P})(t)\in H^{k+1}(\Omega_{F})
\times {(\quotient{H^{k}(\Omega_F)}{\R})},
\qquad\forall t\in(0,T].
$$
By Lemma \ref{Stokes}, it is enough to show that
$$
- {F}(\widehat{\bU},\widehat{P})(t) 
- F_l(\bU,P)(t)\in H^{k-1}(\Omega_{F})
\qquad\forall t\in(0,T].
$$
since $\partial_t^{l}b_{\bA,\bOmega}\in C^{\infty}((0,T)\times\Omega)$, and $\partial_{t}^{l+1}\bU\in C^{\infty}((0,T];H^{k}(\Omega_F))$ by assumption. The only critical terms are derivatives of the convective term.
For $k=2,l=1$ we have
\begin{align*}
\|\widetilde{\bU}\cdot\nabla\partial_t\widehat{\bU}\|_{L^2(\Omega_{F})}
&\leq
\|\widetilde{\bU}\|_{L^{\infty}(\Omega_{F})}
\|\nabla\partial_t\widehat{\bU}\|_{L^2(\Omega_{F})}
\leq
\|\widetilde{\bU}\|_{H^2(\Omega_{F})}
\|\partial_t\widehat{\bU}\|_{H^1(\Omega_{F})}
\\
\|\partial_i\widetilde{\bU}\cdot\nabla\partial_t\widehat{\bU}\|_{L^2(\Omega_{F})}
&\leq
\|\partial_i\widetilde{\bU}\|_{L^{4}(\Omega_{F})}
\|\nabla\partial_t\widehat{\bU}\|_{L^4(\Omega_{F})}
\leq
\|\widetilde{\bU}\|_{H^2(\Omega_{F})}
\|\partial_t\widehat{\bU}\|_{H^2(\Omega_{F})}
\\
\|\widetilde{\bU}\cdot\nabla\partial_i\partial_t\widehat{\bU}\|_{L^2(\Omega_{F})}
&\leq
\|\widetilde{\bU}\|_{L^{\infty}(\Omega_{F})}
\|\nabla\partial_i\partial_t\widehat{\bU}\|_{L^2(\Omega_{F})}
\leq
\|\widetilde{\bU}\|_{H^2(\Omega_{F})}
\|\partial_t\widehat{\bU}\|_{H^2(\Omega_{F})}
\\
\|\partial_t\widetilde{\bU}\cdot\nabla{\bU}\|_{L^2(\Omega_{F})}
&\leq
\|\partial_t\widetilde{\bU}\|_{L^4(\Omega_{F})}
\|\nabla{\bU}\|_{L^4(\Omega_{F})}
\leq
\|\partial_t\widetilde{\bU}\|_{H^1(\Omega_{F})}
\|{\bU}\|_{H^2(\Omega_{F})}
\\
\|\partial_t\partial_i\widetilde{\bU}\cdot\nabla{\bU}\|_{L^2(\Omega_{F})}
&\leq
\|\partial_t\partial_i\widetilde{\bU}\|_{L^4(\Omega_{F})}
\|\nabla{\bU}\|_{L^4(\Omega_{F})}
\leq
\|\partial_t\widetilde{\bU}\|_{H^2(\Omega_{F})}
\|{\bU}\|_{H^2(\Omega_{F})}
\\
\|\partial_t\widetilde{\bU}\cdot\nabla\partial_i{\bU}\|_{L^2(\Omega_{F})}
&\leq
\|\partial_t\widetilde{\bU}\|_{L^{\infty}(\Omega_{F})}
\|\nabla\partial_i{\bU}\|_{L^2(\Omega_{F})}
\leq
\|\partial_t\widetilde{\bU}\|_{H^2(\Omega_{F})}
\|{\bU}\|_{H^2(\Omega_{F})}
\end{align*}
and in the general case the estimates can be obtained in the same way.

\section{Appendix}

\subsection{Time derivatives - general case}
\label{Section:time_derivatives_induction}

In Section \ref{Section:time_derivatives} we have presented the proof of Proposition \ref{time_derivatives} for case $l=1$. Here we are going to present the induction step for general $l\in\N$. The proof in general case is conceptually the same, but with more complicated expressions in the equations. 

Let $l\geq 1$ and let us assume that
\begin{equation*}
\begin{split}
&\partial_t^{l-1}\widetilde{\bU},\partial_t^{l-1}{\bU}\in W^{1,p}(\varepsilon,T;L^p(\Omega_F))\cap L^{p}(\varepsilon,T;W^{2,p}(\Omega_F)),
\\
&\partial_t^{l-1}\widetilde{P},\partial_t^{l-1}P\in L^{p}(\varepsilon,T;{\quotient{W^{1,p}(\Omega_F)}{\R}}),
\\
&\frac{\d^{l-1}}{\dt^{l-1}}\widetilde{\bA},\frac{\d^{l-1}}{\dt^{l-1}}{\bA}, \frac{\d^{l-1}}{\dt^{l-1}}\widetilde{\bOmega},\frac{\d^{l-1}}{\dt^{l-1}}{\bOmega}\in W^{1,p}(\varepsilon,T),
\end{split}
\end{equation*}
for all $\varepsilon>0$ and $1\leq p <\infty$.
We consider the problem \eqref{FixedDomain_stokes} with right hand side
\begin{equation}	\label{right3}
\begin{split}
&F^{*} = F_l^{*} = F(\bU^{*},P^{*}) + tF_l(\bU,P) + \partial_t^l\bU,
\\
&G^{*} = G_l^{*} = G(\bA^{*}) + tG_l(\bA) + \frac{\d^l}{\dt^l}\bA,
\\
&H^{*} = H_l^{*} = H(\bOmega^{*}) + tH_l(\bOmega) 
+ \mathcal{J}\frac{\d^l}{\dt^l}\bOmega,
\end{split}
\end{equation}
where
\begin{equation}	\label{rightFl}
\begin{split}
F_l(\bU,P) 
&= \sum_{p=0}^{l-1}\binom{l}{p}
\left( \mathcal{F}_{l-p}\left(\partial_t^{p}\bU\right)
- \mathcal{G}_{l-p}\left(\partial_t^{p}P
\right)
\right)\\
&= \sum_{p=0}^{l-1}\binom{l}{p}
\left( \mathcal{L}_{l-p}\left(\partial_t^{p}\bU\right)
- \mathcal{M}_{l-p}\left(\partial_t^{p}\bU
\right)
- \mathcal{N}_{l-p}\left(\partial_t^{p}\bU
\right)
- \mathcal{G}_{l-p}\left(\partial_t^{p}P
\right)
\right)    
\end{split}
\end{equation}
\begin{equation}\label{GlHl}
G_l(\bA) =
-\sum_{p=0}^{l-1}\binom{l}{p}
\frac{\d^{l-p}}{\dt^{l-p}}\widetilde{\bOmega}\times \frac{\d^{p}}{\dt^{p}}\bA,
\quad
H_l(\bOmega) =
-\sum_{p=0}^{l-1}\binom{l}{p}
\frac{\d^{l-p}}{\dt^{l-p}}\widetilde{\bOmega}\times \mathcal{J}\left(\frac{\d^{p}}{\dt^{p}}\bOmega\right).
\end{equation}
Subscript $l-p$ in operators $\mathcal{L}_{l-p}$, $\mathcal{M}_{l-p}$, $\mathcal{N}_{l-p}$ and $\mathcal{G}_{l-p}$ denotes $(l-p)$\textsuperscript{th} order time derivative of the coefficients in operators $\mathcal{L}$, $\mathcal{M}$, $\mathcal{N}$ and $\mathcal{G}$.
As for $l=1$, to show that described problem has a unique solution $(\bU_l^{*},P_l^{*},\bA_l^{*},\bOmega_l^{*})$ such that
\begin{equation} \label{condition_2}
\begin{split}
&\bU_l^{*}\in H^1(0,T;L^2(\Omega_F))\cap L^2(0,T;H^{2}(\Omega_F)),
\\
&P_l^{*}\in L^2(0,T;{\quotient{H^{1}(\Omega_F)}{\R}}),
\\
&\bA_l^{*},\bOmega_l^{*}\in H^1(0,T)
\end{split}
\end{equation}
it is sufficient to show that
\begin{equation*}
\begin{split}
&\mathcal{R} = tF_l(\bU,P)+\partial_t^l\bU,
\quad\mathcal{R}_{\ba} = tG_l(\bA)+\frac{\d^{l}}{\dt^{l}}\bA,
\quad\mathcal{R}_{\bomega} = tH_l(\bOmega)+\frac{\d^{l}}{\dt^{l}}\bOmega.
\end{split}
\end{equation*}
satisfy
\begin{equation*}
\begin{split}
&\mathcal{R}\in L^2(0,T;L^2(\Omega_{F}))
\quad\mathcal{R}_{\ba}\in L^2(0,T),
\quad\mathcal{R}_{\bomega} \in L^2(0,T).
\end{split}
\end{equation*}
All the terms can be estimated as in Section \ref{Section:time_derivatives}, so by Proposition \ref{strong2} there exists a unique strong solution $(\bU_l^{*},P_l^{*},\bA_l^{*},\bOmega_l^{*})$ of \eqref{FixedDomain_stokes} with the right hand side \eqref{right3} satisfying \eqref{condition_2}. Again, we have to prove that the obtained solution equals 
$
\left(
t\partial_t^l\bU,t\partial_t^l P,t\frac{\d^l}{\dt^l}\bA,t\frac{\d^l}{\dt^l}\bOmega
\right)
$.
\begin{lemma}	\label{Uniquenaess2}
Let $(\bU,P,\bA,\bOmega)$
be a unique strong solution for \eqref{FixedDomain_2}, and 
$(\bU_l^{*},P_l^{*},\bA_l^{*},\bOmega_l^{*})$
be a unique strong solution for \eqref{FixedDomain_stokes} with the right hand side \eqref{right3} satisfying \eqref{condition_2}. Suppose that
\begin{equation}	\label{condition_p}
\begin{split}
&\bU, \widetilde{\bU}\in W^{l-1,p}(\varepsilon,T;W^{2,p}(\Omega_F))\cap
W^{l,p}(\varepsilon,T;L^{p}(\Omega_F)),
\\
& P, \widetilde{P}\in W^{l-1,p}(\varepsilon,T;{\quotient{W^{1,p}(\Omega_F)}{\R}}),
\\
&\bA,\bOmega, 
\widetilde{\bA}, \widetilde{\bOmega}\in
W^{l,p}(\varepsilon,T)\cap W^{1,\infty}(\varepsilon,T)
\end{split}
\end{equation}
hold for some $l\in\N$ and for all $\varepsilon>0$ and all $1\leq p<\infty$.
Then
\begin{equation}	\label{unique}
(\bU_{l}^{*},P_{l}^{*},\bA_{l}^{*},\bOmega_{l}^{*}) = \left(
t\partial_t^l\bU,t\partial_t^l P,t\frac{\d^l}{\dt^l}\bA,t\frac{\d^l}{\dt^l}\bOmega
\right).
\end{equation}

\end{lemma}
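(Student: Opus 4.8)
The strategy is to repeat the argument of Section~\ref{Section:Uniqueness}, which settled the case $l=1$, now at the level of the $l$-th time derivative. The obstruction is the same: one cannot directly differentiate \eqref{FixedDomain_2} $l$ times in time and test, because we have no control on $\partial_t^l P$ and only limited time regularity of the pressure. I therefore work with the Galerkin approximation $(\bU^m,\bA^m,\bOmega^m)\in\mathbb{V}_m$ of \eqref{approx}, which is smooth in time (its coefficients $c_{jm}$ solve a linear ODE system) and hence may be differentiated in $t$ as many times as needed. As in Section~\ref{Section:Uniqueness}, $(\bU^m,\bA^m,\bOmega^m)$ converges to the unique weak solution $(\bU,\bA,\bOmega)$ of \eqref{FixedDomain_2}, and multiplying by $\mathbb{G}=\nabla\bX^T\nabla\bX$ turns the transformed pressure gradient into $\nabla P$, which is annihilated by divergence-free test functions exactly as in \eqref{weakF_2}.

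First I would differentiate the Galerkin identity \eqref{approx} $l$ times in time. By the Leibniz rule this produces, besides the leading term $\langle\mathbb{G}\mathcal{F}(\partial_t^l\bU^m),\bPsi_i\rangle$, a collection of lower-order terms in which the time-differentiated operators $\mathcal{L}_{l-p},\mathcal{M}_{l-p},\mathcal{N}_{l-p}$ act on $\partial_t^p\bU^m$ with $p<l$, together with the rigid-body terms $G_l,H_l$ from \eqref{GlHl}. Testing against $\partial_t^l\bU^m$ (that is, multiplying by $\frac{\d^l}{\dt^l}c_{im}$ and summing) gives an energy identity for $\partial_t^l\bU^m$; the delicate term is the convective one, exactly as the term involving $\partial_t(\mathbb{G}\mathcal{M})$ in the $l=1$ case. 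Here the interpolation bounds supplied by the induction hypothesis \eqref{condition_p}, together with the assumption $\widetilde{\bA},\widetilde{\bOmega}\in W^{1,\infty}(\varepsilon,T)$, let me absorb the top-order gradient into the dissipation and close a Gronwall estimate, yielding $\partial_t^l\bU\in L^\infty(\varepsilon,T;L^2(\Omega))\cap L^2(\varepsilon,T;H^1(\Omega))$ in the limit $m\to\infty$.

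Next I compare the two systems. Setting
\[
(\widehat{\bU}_l^m,\widehat{\bA}_l^m,\widehat{\bOmega}_l^m)
=\Bigl(\bU_l^{*}-t\partial_t^l\bU^m,\ \bA_l^{*}-t\tfrac{\d^l}{\dt^l}\bA^m,\ \bOmega_l^{*}-t\tfrac{\d^l}{\dt^l}\bOmega^m\Bigr),
\]
I subtract $t$ times the $l$-times differentiated Galerkin identity from the weak formulation of \eqref{FixedDomain_stokes}--\eqref{right3} for $(\bU_l^{*},P_l^{*},\bA_l^{*},\bOmega_l^{*})$. The crucial point is that all pressure contributions cancel. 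Differentiating $\mathbb{G}\mathbb{G}^{-1}=\I$ gives
\[
\sum_{p=0}^{l}\binom{l}{p}\,\partial_t^{l-p}\mathbb{G}\,\partial_t^{p}\mathbb{G}^{-1}=\partial_t^l(\mathbb{G}\mathbb{G}^{-1})=0,
\]
and since the pressure enters only through $\mathbb{G}\mathcal{G}(P)=\nabla P$ and its time derivatives, the pressure terms collected from the two systems assemble into $\partial_t^l(\mathbb{G}\mathbb{G}^{-1})\nabla P=0$. This is precisely the generalization of the identity $\mathbb{G}\mathcal{G}_1(P)+\partial_t\mathbb{G}\,\mathcal{G}(P)=0$ used for $l=1$, and it is what makes the argument work with no bound on $\partial_t^l P$.

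Passing to the limit $m\to\infty$ and using the convergences established above, $(\widehat{\bU}_l,\widehat{\bA}_l,\widehat{\bOmega}_l)$ satisfies a homogeneous weak identity of the same form as in Section~\ref{Section:Uniqueness}. Substituting the admissible test function $\bpsi=\widehat{\bU}_l$ and estimating the convective and metric terms via the Prodi--Serrin condition \eqref{LrLs} and the regularity \eqref{condition_p} gives
\[
\|\widehat{\bU}_l(t)\|_{L^2(\Omega)}^2\le C\int_0^t\|\widehat{\bU}_l(\tau)\|_{L^2(\Omega)}^2\,\d\tau,
\]
whence $\widehat{\bU}_l\equiv 0$ by Gronwall's lemma, i.e. $\bU_l^{*}=t\partial_t^l\bU$. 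The equations $\eqref{FixedDomain_2}_1$ and $\eqref{FixedDomain_stokes}_1$ then force $\nabla P_l^{*}=t\nabla\partial_t^l P$, and comparing the rigid parts on $\overline{S_0}$ yields $\bA_l^{*}=t\frac{\d^l}{\dt^l}\bA$ and $\bOmega_l^{*}=t\frac{\d^l}{\dt^l}\bOmega$, which is \eqref{unique}. I expect the main obstacle to be the bookkeeping of the Leibniz expansions and the verification of the pressure cancellation at order $l$; the remaining energy estimates are routine higher-order analogues of those carried out in Section~\ref{Section:time_derivatives}.
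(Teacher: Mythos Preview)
Your proposal is correct and follows essentially the same approach as the paper's own proof in Section~\ref{Section:Uniqueness2}: Galerkin approximation, $l$-fold time differentiation via Leibniz, energy estimate for $\partial_t^l\bU^m$ using the induction hypothesis \eqref{condition_p} and the $W^{1,\infty}$ bound on $\widetilde{\bA},\widetilde{\bOmega}$, the pressure cancellation identity $\partial_t^l(\mathbb{G}\mathbb{G}^{-1})=0$, and a final Gronwall argument on $\widehat{\bU}_l$. The only minor slip is terminology---the term requiring the $W^{1,\infty}$ assumption is the one coming from $\partial_t^k(\mathbb{G}\mathcal{M})$ (the transformed time-derivative/transport operator), not the convective operator $\mathcal{N}$---and in the closing estimate the paper relies on \eqref{condition_p} rather than invoking Prodi--Serrin directly.
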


\subsection{Proof of Lemma \ref{Uniquenaess2}}
\label{Section:Uniqueness2}

Let $(\bU,P,\bA,\bOmega)$
be a unique strong solution for \eqref{FixedDomain_2} satisfying \eqref{condition_p} and let 
$(\bU_{l}^{*},P_{l}^{*},\bA_{l}^{*},\bOmega_{l}^{*})$ be
a unique strong solution for \eqref{FixedDomain_stokes} with the right hand side \eqref{right3}. We want to show \eqref{unique}. Since we have already shown that the statement is valid for $l=1$ in Section \ref{Section:Uniqueness}, we can suppose that $l\geq 2$.

We use Galerkin approximations $(\bU^m,\bA^m,\bOmega^m)$, as in Section \ref{Section:Uniqueness}, and assume that 
$$
\|\bU^m\|_{W^{l-1,\infty}(\varepsilon,T;L^2(\Omega))}
+
\|\bU^m\|_{H^{l-1}(\varepsilon,T;H^1(\Omega))}
<M,
$$
for some constant $M>0$. This assumption comes from the previous step of the induction.

We want to show that
$$
\|\partial_t^l\bU^m\|_{L^{\infty}(\varepsilon,T;L^2(\Omega))}
+
\|\partial_t^l\bU^m\|_{L^{2}(\varepsilon,T;H^1(\Omega))}
<M.
$$
for some constant $M>0$,
which implies that
$$
\partial_t^l\widetilde{\bU},\partial_t^l{\bU}\in L^{\infty}(\varepsilon,T;L^2(\Omega))\cap L^{2}(\varepsilon,T;H^1(\Omega)).
$$ 
By \eqref{approx} approximation  $(\bU^m,\bA^m,\bOmega^m)$ satisfies
\begin{equation*}
	\begin{split}
	&\int_{\Omega}\mathbb{G}\partial_{t}{\bU^m}
	\cdot{\bPsi}_i
	\, \d\by
	+ \left\langle \mathbb{G}\mathcal{F}\bU^m,\bPsi_i\right\rangle 
	- G(\bA^m)\cdot\bPsi_i^{\ba}
	- H(\bOmega^m)\cdot\bPsi_i^{\bomega}
	= 0
	\end{split}
\end{equation*}
for $i=1,...m$.
We differentiate the equation in time $l$ times
\begin{equation*}
\begin{split}
&\int_{\Omega}\partial_t^l(\mathbb{G}\partial_{t}{\bU^m})
\cdot\bPsi_i
\, \d\by
- \left\langle \mathbb{\mathbb{G}}\mathcal{F}(\partial_t^l\bU^m),\bPsi_i\right\rangle 
- \sum_{k=1}^{l}\binom{l}{k}
\left\langle \partial_t^k(\mathbb{G}\mathcal{F})(\partial_t^{l-k}\bU^m),\bPsi_i\right\rangle 
\\
&\qquad
- G\left(\frac{\d^l}{\dt^l}\bA^m\right)\cdot\bPsi_i^{\ba}
- H\left(\frac{\d^l}{\dt^l}\bOmega^m\right)\cdot\bPsi_i^{\bomega}
- \sum_{k=1}^{l}\binom{l}{k}
\left(
G_k\left(\frac{\d^{l-k}}{\dt^{l-k}}\bA^m\right)\cdot\bPsi_i^{\ba}
+ H_k\left(\frac{\d^{l-k}}{\dt^{l-k}}\bOmega^m\right)\cdot\bPsi_i^{\bomega}
\right)
\\
&\qquad
=0,
\end{split}
\end{equation*}
multiply by $\frac{\d^l}{\dt^l}c_{im}$ and sum over $i$ form $1$ to $m$ to obtain
\begin{equation*}
\begin{split}
&\int_{\Omega}\partial_t^l(\mathbb{G}\partial_{t}{\bU^m})
\cdot\partial_{t}^l{\bU^m}
\, \d\by
- \left\langle \mathbb{\mathbb{G}}\mathcal{F}(\partial_t^l\bU^m),\partial_t^l\bU^m\right\rangle 
- \sum_{k=1}^{l}\binom{l}{k}
\left\langle \partial_t^k(\mathbb{G}\mathcal{F})(\partial_t^{l-k}\bU^m),\partial_t^l\bU^m\right\rangle 
\\
&\qquad
- G\left(\frac{\d^l}{\dt^l}\bA^m\right)\cdot\frac{\d^l}{\dt^l}\bA^m
- H\left(\frac{\d^l}{\dt^l}\bOmega^m\right)\cdot\frac{\d^l}{\dt^l}\bOmega^m
\\
&\qquad
- \sum_{k=1}^{l}\binom{l}{k}
\left(
G_k\left(\frac{\d^{l-k}}{\dt^{l-k}}\bA^m\right)\cdot\frac{\d^l}{\dt^l}\bA^m
+ H_k\left(\frac{\d^{l-k}}{\dt^{l-k}}\bOmega^m\right)\cdot\frac{\d^l}{\dt^l}\bOmega^m
\right)
=0.
\end{split}
\end{equation*}
Then we integrate the equation on $[t_1,t_2]\subset(0,T]$ and, in the same way as in Section \ref{Section:TimeDerivatives_estimates}, estimate
\begin{align*}
\int_{\Omega}\partial_t^l(\mathbb{G}\partial_{t}{\bU^m})
\cdot\partial_{t}^l{\bU^m}
\, \d\by
&=
\int_{\Omega}\mathbb{G}\partial_{t}^{l+1}{\bU^m}
\cdot\partial_{t}^l{\bU^m}
\, \d\by
+
\sum_{k=1}^{l}\binom{l}{k}
\int_{\Omega}\partial_t^k\mathbb{G}\,\partial_{t}^{l-k+1}{\bU^m}
\cdot\partial_{t}^l{\bU^m}
\, \d\by
\\
&=
\frac{1}{2}\frac{\d}{\dt}\int_{\Omega}\mathbb{G}\partial_{t}^{l}{\bU^m}
\cdot\partial_{t}^l{\bU^m}
\, \d\by
-\int_{\Omega}\partial_{t}\mathbb{G}\,\partial_{t}^{l}{\bU^m}
\cdot\partial_{t}^l{\bU^m}
\, \d\by
\\
&\qquad+
\sum_{k=1}^{l}\binom{l}{k}
\int_{\Omega}\partial_t^k\mathbb{G}\,\partial_{t}^{l-k+1}{\bU^m}
\cdot\partial_{t}^l{\bU^m}
\, \d\by
\end{align*}
\begin{align*}
\left| 
-\int_{t_1}^{t_2}\int_{\Omega}\partial_{t}\mathbb{G}\,\partial_{t}^{l}{\bU^m}
\cdot\partial_{t}^l{\bU^m}
\, \d\by\d\tau
+
\sum_{k=1}^{l}\binom{l}{k}
\int_{t_1}^{t_2}\int_{\Omega}\partial_t^k\mathbb{G}\,\partial_{t}^{l-k+1}{\bU^m}
\cdot\partial_{t}^l{\bU^m}
\, \d\by\d\tau
\right| 
&\leq C\|\bU^m\|_{H^l(t_1,t_2;L^2(\Omega))}^2
\end{align*}
\begin{align*}
&\left|
\int_{t_1}^{t_2}\left\langle \mathbb{G}\mathcal{F}\left(\partial_t^l\bU^m\right),\partial_{t}^l{\bU^m}\right\rangle \d\tau
- \int_{t_1}^{t_2}\int_{\Omega_{F}}2|\D\partial_t^l\bU^m|^2\,\d\by\d\tau
\right|
\\
&\qquad\leq 
\left(
\|g_{ik}g^{il}-\delta_{ik}\delta_{il}\|_{L^{\infty}(0,T;L^{\infty}(\Omega))}
+\mu
\right)
\int_{t_1}^{t_2}\int_{\Omega_{F}}|\nabla\partial_t^l\bU^m|^2\,\d\by\d\tau
+ C\int_{t_1}^{t_2}\|\partial_t^l\bU^m\|_{L^2(\Omega)}^2\,\d\tau
\end{align*}
The only difference from Section \ref{Section:TimeDerivatives_estimates} is in the following estimate
\begin{align*}
&\left|
\int_{t_1}^{t_2}\sum_{k=1}^l\binom{l}{k}\int_{\Omega_F} \partial_t^k(\mathbb{G}\mathcal{M})(\bU^m),\partial_t^l\bU^m\,\d\by\d\tau
\right|
\\
&\qquad\leq {\int_{t_1}^{t_2}}\|\partial_t^l\bU^m\|_{L^2(\Omega)}^2\,\d\tau 
+ C\left(\|\widetilde{\bA}\|_{W^{l,4}(t_1,t_2)}
+ \|\widetilde{\bOmega}\|_{W^{l,4}(t_1,t_2)}
\right)^2
\|\bU^m\|_{W^{l-2,4}(t_1,t_2;H^{1}(\Omega_F))}^2
\\
&\qquad\quad
+ C\left(\|\widetilde{\bA}\|_{W^{l-1,\infty}(t_1,t_2)}
+ \|\widetilde{\bOmega}\|_{W^{l-1,\infty}(t_1,t_2)}
\right)^2
\|\bU^m\|_{H^{l-1}(t_1,t_2;H^{1}(\Omega_F))}^2
\\
&\qquad\leq 
{\int_{t_1}^{t_2}}\|\partial_t^l\bU^m\|_{L^2(\Omega)}^2\,\d\tau + C
\|\bU^m\|_{H^{l-1}(t_1,t_2;H^{1}(\Omega_F))}^2.
\end{align*}
The last inequality follows from the fact that 
$
\widetilde{\bA}, \widetilde{\bOmega}\in
W^{l,4}(\varepsilon,T)\cap W^{1,\infty}(\varepsilon,T)
$
and embedding $W^{l-2,4}(t_1,t_2;H^{1}(\Omega_F))\hookrightarrow H^{l-1}(t_1,t_2;H^{1}(\Omega_F))$ for $l\geq 2$. Therefore, we get
\begin{align*}
&\left|
\int_{t_1}^{t_2}\left\langle \partial_t^k(\mathbb{G}\mathcal{F})(\partial_t^{l-k}\bU^m),\partial_t^l\bU^m\right\rangle\,\d\by\d\tau
\right|
\\
&\qquad\leq 
\mu
\int_{t_1}^{t_2}\int_{\Omega_{F}}|\nabla\partial_t^l\bU^m|^2\,\d\by\d\tau
+ C\int_{t_1}^{t_2}\|\partial_t^l\bU^m\|_{L^2(\Omega)}^2\,\d\tau
+ C
\|\bU^m\|_{H^{l-1}(t_1,t_2;H^{1}(\Omega_F))}^2
\end{align*}

All together, we get
\begin{align*}
&\|\partial_t^l\bU^m\|_{L^2(\Omega)}(t_2)
+ \|\D\partial_t\bU^m\|_{L^2(t_1,t_2;L^2(\Omega_F))}^2
\\
&\quad\leq
C\|\partial_t^l\bU^m\|_{L^2(\Omega)}(t_1)
+ C\int_{t_1}^{t_2}\|\partial_t^l\bU^m\|_{L^2(\Omega)}^2\d t
\\
&\qquad+ C\left(
\|g_{ik}g^{il}-\delta_{ik}\delta_{il}\|_{L^{\infty}(0,T;L^{\infty}(\Omega))}
+\mu
\right)\int_{t_1}^{t_2}\|\nabla\partial_t^l\bU^m\|_{L^2(\Omega_F)}^2\,\d\tau
+ C \|\bU^m\|_{H^{l-1}(t_1,t_2; H^1(\Omega_F))}^2,
\end{align*}
for arbitrary $\mu>0$, where $\|g_{ik}g^{il}-\delta_{ik}\delta_{il}\|_{L^{\infty}(0,T;L^{\infty}(\Omega))}$ is small for small $T$.
Now,
we take sufficiently small $T$, integrate the inequality on $t_1\in(\varepsilon,t_2)$
and by Gronwall's Lemma we find
\begin{align*}
&\|\partial_t^l\bU^m(t)\|_{L^2(\Omega)}^2
+ \int_{\varepsilon}^{t}\|\D\partial_t^l\bU^m\|_{L^2(\Omega_F)}^2\,\d\tau
\leq M,
\end{align*}
where constant $M>0$ depends on the norms $\|\widetilde{\bU}\|_{H^{l-1}(\varepsilon,T;H^1(\Omega))}$, 
$\|\bU^m\|_{H^{l-1}(0,T;H^1(\Omega))}$, 
$\|\widetilde{\bA}\|_{W^{1,\infty}(\varepsilon,T)}$, $\|\widetilde{\bA}\|_{W^{l,4}(\varepsilon,T)}$,
$\|\widetilde{\bOmega}\|_{W^{1,\infty}(\varepsilon,T)}$, $\|\widetilde{\bOmega}\|_{W^{l,4}(\varepsilon,T)}$ and $T$.
Finally, in the limit we get that 
$\partial_t^l\bU\in L^2(\varepsilon,T;H^1(\Omega))\cap L^{\infty}(\varepsilon,T;L^2(\Omega))$, for all $\varepsilon>0$.

\subsubsection{Uniqueness}
In previous section we showed that
$\partial_t^l\bU\in L^2(\varepsilon,T;H^1(\Omega))\cap L^{\infty}(\varepsilon,T;L^2(\Omega))$, for all $\varepsilon>0$. Now we are able to show that $\bU_{l}^{*}=t\partial_t^l\bU$.
We know that $(\bU, P)$ satisfies
\begin{equation}	\label{eq_1}
\begin{split}
&\int_{\Omega_F}\partial_{t}^k\mathbb{G}\, \partial_{t}^{l-k+1}\bU
\cdot\bpsi
\, \d\by
-\int_{\Omega_F} \partial_{t}^k\mathbb{G}\,
\partial_{t}^{l-k}\left(\mathcal{F}(\bU)\right)
\cdot \bpsi
\, \d\by
+ \int_{\Omega_F}\partial_{t}^k\mathbb{G}\,\partial_{t}^{l-k}(\mathcal{G}(P))
\cdot\bpsi
\, \d\by
= 0,
\quad 1\leq k \leq l,
\end{split}
\end{equation}
for all $\bpsi\in V(0)$, 
and $(\bU_{l}^{*}, P_{l}^{*},\bA_{l}^{*},\bOmega_{l}^{*})$ satisfies
\begin{equation}	\label{eq_2}
\begin{split}
&\int_{\Omega_F}\mathbb{G}\partial_{t}{\bU_{l}^{*}}
\cdot\bpsi
\, \d\by
+ \frac{\d}{\dt}\bA_{l}^{*}\cdot\bpsi_{\ba}
+ \frac{\d}{\dt}(\mathcal{J}{\bOmega_{l}^{*}})\cdot\bpsi_{\bomega}
\\
&\qquad
+ \left\langle \mathbb{G}\mathcal{F}(\bU_{l}^{*}),\bpsi\right\rangle  
+ t\sum_{k=1}^{l}\binom{l}{k}
\left(
\left\langle \mathbb{G}\mathcal{F}_k (\partial_t^{l-k}\bU),\bpsi\right\rangle 
+\int_{\Omega_F}\mathbb{G}\mathcal{G}_l( \partial_t^{l-k}P)
\cdot\bpsi
\, \d\by
\right)
\\
&\qquad
- G(\bA_{l}^{*})\cdot\bpsi_{\ba}
- H(\bOmega_{l}^{*})\cdot\bpsi_{\bomega}
- t\sum_{k=1}^{l}\binom{l}{k}\left(
G_k\left(\frac{\d^{l-k}}{\dt^{l-k}}\bA\right)\cdot\bpsi_{\ba}
+ H_k\left(\frac{\d^{l-k}}{\dt^{l-k}}\bOmega\right)\cdot\bpsi_{\bomega}
\right)\\
&\qquad
-\int_{\Omega_F}\mathbb{G}\partial_{t}^l{\bU}
\cdot\bpsi
- \left(\frac{\d^{l}}{\dt^{l}}\bA\right)\cdot\bpsi_{\ba}
- \left(\frac{\d^{l}}{\dt^{l}}\bOmega\right)\cdot\bpsi_{\bomega}
=0,
\end{split}
\end{equation}
where 
\begin{equation*}
\left\langle \mathbb{G}\,\mathcal{F}_k(\partial_{t}^{l-k}\bU),\bpsi\right\rangle
=- \int_{\Omega_F}\mathbb{G}\,\mathcal{F}_k(\partial_{t}^{l-k}\bU)
\cdot\bpsi
\, \d\by,\quad 1\leq k\leq l.
\end{equation*}
For  $(\bpsi,\bpsi_{\ba},\bpsi_{\bomega})=h(t)({\bPsi}_j,{\bPsi}_j^{\ba},{\bPsi}_j^{\bomega})$
we have
\begin{equation}	\label{eq_3}
\begin{split}
&\int_{\Omega_F}\mathbb{G}\partial_{t}{\partial_t^l\bU^m}
\cdot\bpsi
\, \d\by
+ \sum_{k=1}^{l}\binom{l}{k}\int_{\Omega_F}\partial_t^k\mathbb{G}\,{\partial_t^{l-k+1}\bU^m}
\cdot\bpsi
\, \d\by
+ \frac{\d^l}{\dt^l}\mathcal{J}\left(\frac{\d}{\dt}\bOmega^m\right)\cdot\bpsi_{\bomega}
+ \frac{\d^{l+1}}{\dt^{l+1}}\bA^m\cdot\bpsi_{\ba}
\\
&\qquad+ \left\langle \mathbb{\mathbb{G}}\mathcal{F}(\partial_t^l\bU^m),\bpsi\right\rangle 
+ \sum_{k=1}^{l}\binom{l}{k}
\left\langle \mathbb{G}\mathcal{F}_k(\partial_t^{l-k}\bU^m),\bpsi\right\rangle 
+ \sum_{k=1}^{l}\binom{l}{k}
\left\langle \partial_t^k \mathbb{G}\,\partial_t^{l-k}\left(\mathcal{F}(\bU^m)\right),\bpsi\right\rangle 
\\
&\qquad
- G\left(\frac{\d^l}{\dt^l}\bA^m\right)\cdot\bpsi_{\ba}
- H\left(\frac{\d^l}{\dt^l}\bOmega^m\right)\cdot\bpsi_{\bomega}
- \sum_{k=1}^{l}\binom{l}{k}
\left(
G_k\left(\frac{\d^{l-k}}{\dt^{l-k}}\bA^m\right)\cdot\bpsi_{\ba}
+ H_k\left(\frac{\d^{l-k}}{\dt^{l-k}}\bOmega^m\right)\cdot\bpsi_{\bomega}
\right)
\\
&\qquad
=0,
\end{split}
\end{equation}
where
\begin{equation*}
\left\langle \partial_t^k\mathbb{G}\,\partial_t^{l-k}\left(\mathcal{F}(\bU)\right),\bpsi\right\rangle
=- \int_{\Omega_F}\partial_t^k\mathbb{G}\,\partial_t^{l-k}\left(\mathcal{F}(\bU)\right)
\cdot\bpsi
\, \d\by,\quad 1\leq k\leq l.
\end{equation*}
We multiply the above equation by $t$ and subtract from the previous one with
\begin{equation*}
	(\widehat{\bU}^m,\widehat{\bA}^m,\widehat{\bOmega}^m)
	= 
	\left(
	\bU_{l}^{*}-t\partial_t^l\bU^m,\,
	\bA_{l}^{*}-t\frac{\d^l}{\dt^l}\bA^m,\,
	\bOmega_{l}^{*}-t\frac{\d^l}{\dt^l}\bOmega^m
	\right)
\end{equation*}
Then, by using \eqref{eq_1}, we obtain
\begin{equation*}
\begin{split}
&\int_{\Omega_F}\mathbb{G}\partial_{t}\widehat{\bU}^m
\cdot\bpsi
\, \d\by
+ t\sum_{k=1}^{l}\binom{l}{k}\int_{\Omega_F}\partial_t^k\mathbb{G}\,{\partial_t^{l-k+1}({\bU}-{\bU^m})}
\cdot\bpsi
\, \d\by
+ \frac{\d}{\dt}\widehat{\bA}^m\cdot\bpsi_{\ba}
+ \frac{\d}{\dt}(\mathcal{J}{\widehat{\bOmega}^m})\cdot\bpsi_{\bomega}
\\
&\qquad
- \left\langle \mathbb{G}\mathcal{F}(\widehat{\bU}^m),\bpsi\right\rangle  
- t\sum_{k=1}^{l}\binom{l}{k}
	\left(
	\left\langle \mathbb{G}\mathcal{F}_k (\partial_t^{l-k}({\bU}-{\bU^m})),\bpsi\right\rangle
	+ \left\langle \partial_t^k \mathbb{G}\,\partial_t^{l-k}\left(\mathcal{F}({\bU}-{\bU^m})\right),\bpsi\right\rangle  
	\right)
\\
&\qquad
{
	- t\sum_{k=1}^{l}\binom{l}{k}
	\int_{\Omega_F}\mathbb{G}\mathcal{G}_l (\partial_t^{l-k}P)
	\cdot\bpsi
	\, \d\by}
-t\sum_{k=1}^{l}\binom{l}{k}
{
	\int_{\Omega_F}\partial_{t}^k\mathbb{G}\,\partial_{t}^{l-k}(\mathcal{G}(P))
	\cdot\bpsi
	\, \d\by}
\\
&\qquad
- G(\widehat{\bA}^m)\cdot\bpsi_{\ba}
- H(\widehat{\bOmega}^m)\cdot\bpsi_{\bomega}
\\
&\qquad
- 
{t\sum_{k=1}^{l}\binom{l}{k}\left(
	G_k\left(\frac{\d^{l-k}}{\dt^{l-k}}(\bA-\bA^m)\right)\cdot\bpsi_{\ba}
	+ H_k\left(\frac{\d^{l-k}}{\dt^{l-k}}(\bOmega-\bOmega^m)\right)\cdot\bpsi_{\bomega}
	\right)}
\\
&\qquad
-\int_{\Omega_F}\mathbb{G}\partial_{t}^l({\bU}-{\bU^m})
\cdot\bpsi
\,\d\by
- \left(\frac{\d^{l}}{\dt^{l}}(\bA-\bA^m)\right)\cdot\bpsi_{\ba}
- \left(\frac{\d^{l}}{\dt^{l}}(\bOmega-\bOmega^m)\right)\cdot\bpsi_{\bomega}
=0
\end{split}
\end{equation*}
It can be shown that the terms with the pressure cancels, i.e. it holds
\begin{equation}
\sum_{k=1}^{l}\binom{l}{k}
\mathbb{G}\mathcal{G}_l (\partial_t^{l-k}P)
+\sum_{k=1}^{l}\binom{l}{k}
\partial_{t}^k\mathbb{G}\,\partial_{t}^{l-k}(\mathcal{G}(P))
= 0,
\end{equation}
and after integrating above equation on $(0,t)$, we get
\begin{equation*}
\begin{split}
&
\int_{\Omega_F}\mathbb{G}(t)\widehat{\bU}^m(t)
\cdot\bpsi(t)
\, \d\by
-\int_{0}^{t}\int_{\Omega_F}\mathbb{G}\widehat{\bU}^m
\cdot\partial_{t}\bpsi
\, \d\by\d\tau
-\int_{0}^{t}\int_{\Omega_F}\partial_t\mathbb{G}\widehat{\bU}^m
\cdot\partial_{t}\bpsi
\, \d\by\d\tau
\\
&\qquad
+ t\sum_{k=1}^{l}\binom{l}{k}
\int_{0}^{t}\int_{\Omega_F}\partial_t^k\mathbb{G}\,{\partial_t^{l-k+1}(\bU-\bU^m)}
\cdot\bpsi
\, \d\by
\,\d\tau
\\
&\qquad
+\widehat{\bA}^m(t)\cdot\bpsi_{\ba}(t)
+ \mathcal{J}{\widehat{\bOmega}^m}(t)\cdot\bpsi_{\bomega}(t)
-
\int_{0}^{t}
\left(
\widehat{\bA}^m\cdot\frac{\d}{\dt}\bpsi_{\ba}
+ \mathcal{J}{\widehat{\bOmega}^m}\cdot\frac{\d}{\dt}\bpsi_{\bomega}
\right)\,\d\tau
\\
&\qquad
- \int_{0}^{t}
\left\langle \mathbb{G}\mathcal{F}(\widehat{\bU}^m),\bpsi\right\rangle  
\,\d\tau
- t\sum_{k=1}^{l}\binom{l}{k}
	\int_{0}^{t}\left(
	\left\langle \mathbb{G}\mathcal{F}_k (\partial_t^{l-k}(\bU-\bU^m)),\bpsi\right\rangle
	+ \left\langle \partial_t^l \mathbb{G}\,\partial_t^{l-k}\mathcal{F}(\bU-\bU^m),\bpsi\right\rangle  
	\right)\,\d\tau
\\
&\qquad
- \int_{0}^{t}\left(
G(\widehat{\bA}^m)\cdot\bpsi_{\ba}
+ H(\widehat{\bOmega}^m)\cdot\bpsi_{\bomega}
\right)\,\d\tau
\\
&\qquad
- 
{t\sum_{k=1}^{l}\binom{l}{k}
	\int_{0}^{t}\left(
	G_k\left(\frac{\d^{l-k}}{\dt^{l-k}}(\bA-\bA^m)\right)\cdot\bpsi_{\ba}
	+ H_k\left(\frac{\d^{l-k}}{\dt^{l-k}}(\bOmega-\bOmega^m)\right)\cdot\bpsi_{\bomega}
	\right)\,\d\tau}
\\
&\qquad
-\int_{0}^{t}\int_{\Omega_F}\mathbb{G}\partial_{t}^l({\bU}-{\bU^m})
\cdot\bpsi
\,\d\by\,\d\tau
- \int_{0}^{t}\left(\frac{\d^{l}}{\dt^{l}}(\bA-\bA^m)\right)\cdot\bpsi_{\ba}\,\d\tau
- \int_{0}^{t}\left(\frac{\d^{l}}{\dt^{l}}(\bOmega-\bOmega^m)\right)\cdot\bpsi_{\bomega}\,\d\tau
=0.
\end{split}
\end{equation*}
We let $m\to\infty$ and obtain the equation
\begin{equation*}
\begin{split}
&
\int_{\Omega_F}\mathbb{G}(t)\widehat{\bU}(t)
\cdot\bpsi(t)
\, \d\by
-\int_{0}^{t}\int_{\Omega_F}\widehat{\bU}
\cdot\partial_{t}(\mathbb{G}\bpsi)
\, \d\by\d\tau
\\
&\qquad
+\widehat{\bA}(t)\cdot\bpsi_{\ba}(t)
+ \mathcal{J}{\widehat{\bOmega}}(t)\cdot\bpsi_{\bomega}(t)
-
\int_{0}^{t}
\left(
\widehat{\bA}\cdot\frac{\d}{\dt}\bpsi_{\ba}
+ \mathcal{J}{\widehat{\bOmega}}\cdot\frac{\d}{\dt}\bpsi_{\bomega}
\right)\,\d\tau
\\
&\qquad
- \int_{0}^{t}
\left\langle \mathbb{G}\mathcal{F}(\widehat{\bU}),\bpsi\right\rangle\,\d\tau
- \int_{0}^{t}\left(
G(\widehat{\bA})\cdot\bpsi_{\ba}
+ H(\widehat{\bOmega})\cdot\bpsi_{\bomega}
\right)\,\d\tau
=0
\end{split}
\end{equation*}
where
$$
(\widehat{\bU},\widehat{\bA},\widehat{\bOmega})
= \left(
\bU_{l}^{*}-t\partial^l\bU,\bA_{l}^{*}-\frac{\d^l}{\dt^l}\bA,\bOmega_{l}^{*}-t\frac{\d^l}{\dt^l}\bOmega
\right).
$$
By the linearity and the density, the above equality holds for all $(\bpsi,\bpsi_{\ba},\bpsi_{\bomega})\in\mathbb{V}$.
Then we can substitute
$$
(\bpsi,\bpsi_{\ba},\bpsi_{\bomega})
=
(\widehat{\bU},\widehat{\bA},\widehat{\bOmega})
$$
and get the equality
\begin{multline*}
\frac{1}{2}
\left\|(\nabla\bX
\widehat{\bU})(t)
\right\|_{L^2(\Omega)}^2 
-\int_{0}^t\int_{\Omega}\nabla\bX^T\partial_t\nabla\bX\widehat{\bU}\cdot\widehat{\bU}\,\d\by\d\tau
\\
- \int_{0}^{t}
\left\langle \mathbb{G}\mathcal{F}(\widehat{\bU}),\widehat{\bU}\right\rangle\,\d\tau
- \int_{0}^{t}\left(
G(\widehat{\bA})\cdot\widehat{\bA}
+ H(\widehat{\bOmega})\cdot\widehat{\bOmega}
\right)\,\d\tau
=0.
\end{multline*}
Now, as in Section \ref{Section:time_derivatives}, we can get the estimate
\begin{equation*}
\begin{split}
&\left\|
\widehat{\bU}(t)
\right\|_{L^2(\Omega)}^2
+ C_1\int_{0}^{t}\int_{\Omega_F}
|\D\widehat{\bU}|^2
\, \d\by\d\tau
\leq 
\int_{0}^{t}C\left\|
\widehat{\bU}(\tau)
\right\|_{L^2(\Omega)}^2
\d\tau
+\mu\int_{0}^{t}\int_{\Omega_F}
|\D\widehat{\bU}|^2
\, \d\by\d\tau
\end{split}
\end{equation*}
for all $\mu>0$ and for sufficiently small $\mu$ we get
\begin{equation*}
\begin{split}
&\left\|
\widehat{\bU}(t)
\right\|_{L^2(\Omega)}^2
\leq 
\int_{0}^{t}C\left\|
\widehat{\bU}(\tau)
\right\|_{L^2(\Omega)}^2
\d\tau.
\end{split}
\end{equation*}	
Finally, Gronwall's Lemma implies $\widehat{\bU}=0$ which means that $\bU_{l}^{*}=t\partial_t^l\bU$.
Then the equations for $\bU$ and $\bU_l^{*}$ give $\nabla P_l^{*}=t\nabla \partial_t P$, and since $\bU_l^{*}=\bA_l^{*}+ \bOmega_l^{*}\times\by$ and $\partial_t\bU=\frac{\d^l}{\dt^l}\bA+ \frac{\d^l}{\dt^l}\bOmega\times\by$ on $\overline{S_0}$, it follows that $\bA_l^{*}=t\frac{\d^l}{\dt^l}\bA$ and $\bOmega_l^{*}=t\frac{\d^l}{\dt^l}\bOmega$.

\section{Notation}
\renewcommand{\arraystretch}{1.5}
\begin{longtable}	
	{| p{.20\textwidth} | p{.50\textwidth} | p{.20\textwidth} |} 
	\hline
	\textbf{Label} & \textbf{Description} &
	\textbf{definition/1st appearance} 
	\\ \hline\hline 
	$(\widetilde{\bu}, \widetilde{p},\widetilde{\bomega},\widetilde{\ba})$ 
	& solution for the original nonlinear problem \eqref{FSINoslip} on physical domain 
	&  Section \ref{Section:Linear} 
	\\ \hline
	$(\bu, p,\bomega,\ba)$ 
	& solution for the linear problem \eqref{Linear} on the physical domain 
	& Section \ref{Section:Linear} 
	\\ \hline
	$(\widetilde{\bU}, \widetilde{P},\widetilde{\bOmega},\widetilde{\bA})$ 
	& solution for the nonlinear problem on the cylindrical domain 
	& Section \ref{Section:Transformed}, equation \eqref{ChangeOfVariables} 
	\\ \hline
	$(\bU, P,\bOmega,\bA)$ 
	& solution for the linear problem \eqref{FixedDomain_2} on the cylindrical domain 
	& Section \ref{Section:Transformed}, equation \eqref{ChangeOfVariables} 
	\\ \hline
	$(\bphi, \bphi_{\bomega}, \bphi_{\ba})$ 
	& test function on the physical domain 
	&  Definition \ref{definition}
	\\ \hline
	$(\bpsi, \bpsi_{\bomega}, \bpsi_{\ba})$ 
	& test function on the cylindrical domain 
	&  Section \ref{Section:Uniqueness}
	\\ \hline
	$\bX(t,\by)$ 
	& changes of variables 
	& Section \ref{Section:Transformed} 
	\\ \hline
	$\bY(t,\bx)$ 
	& changes of variables 
	& Section \ref{Section:Transformed} 
	\\ \hline
	$F(\bU,P)$, $G(\bA), H(\bOmega)$ 
	& right-hand side of the linear problem \eqref{FixedDomain_2} on the cylindrical domain, 
	& equations \eqref{rightF} and \eqref{rihgtGH}
	\\
	&
	$F(\bU,P) = (\mathcal{L}-\Delta)\bU-\mathcal{M}\bU-\mathcal{N}\bU-(\mathcal{G}-\nabla)P$
	&  
	\\ \hline
	$\mathcal{L}\bU$ 
	& the transformed Laplace operator
	& equation \eqref{OpL} 
	\\ \hline
	$\mathcal{M}\bU$ 
	& the transformation of time derivative and gradient
	& equation \eqref{OpM} 
	\\ \hline
	$\mathcal{N}\bU$ 
	& the transformation of the convection term 
	& equation \eqref{OpN} 
	\\ \hline
	$\mathcal{G}P$ 
	& $\mathcal{G}P=\nabla\bY\nabla\bY^T\nabla P$,  the transformation of the gradient of the pressure
	& equation \eqref{OpP}
	\\ \hline
	$\mathcal{F}\bU$ 
	& $\mathcal{F}\bU = \mathcal{L}\bU-\mathcal{M}\bU-\mathcal{N}\bU-\mathcal{G}P$ 
	& equation \eqref{rightF2} 
	\\ \hline
	$({\bU}^{*}, {P}^{*},{\bOmega}^{*},{\bA}^{*})$ 
	& the fixed point, the solution for the transformed problem
	& Section \ref{Section:StrongSolution} \\ \hline
	$(\widehat{\bU}, \widehat{P},\widehat{\bOmega},\widehat{\bA})$ 
	&  the fixed point, functions on the right-hand side
	& Section \ref{Section:StrongSolution} 
    \\ \hline
	$F^{*}$, $G^{*}$, $H^{*}$  
	& the right-hand side for the Stokes problem 
	& Section \ref{Section:StrongSolution} \\ \hline
	$X_{p,q}^T$, $Y_{p,q}^T$
	& $X_{p,q}^T :=W^{1,p}(0,T;L^q(\Omega_{F}))\cap L^p(0,T;W^{2,q}(\Omega_F))$ $Y_{p,q}^T := L^p(0,T; {W}^{1,q}(\Omega_{F}))$
	& Section \ref{Section:StrongSolution}, Theorem \ref{regularity_fixedPoint}\\ \hline
	$F_l(\bU,P)$ & $"F_l(\bU,P)=\partial_{t}^l(F({\bU},{P}))-F(\partial_{t}^l{\bU},\partial_{t}^l{P})"$ & \eqref{f2}-\eqref{f6} \\ \hline
	$G_l(\bA)$ 
    & $"G_l(\bA)=\frac{d^l}{{dt}^l}(G(\bA))-G\left(\frac{d^l}{{dt}^l}\bA\right)"$
 & \eqref{G1H1} ($l=1$) \& \eqref{GlHl} (general case) \\ \hline
	$H_l(\bOmega)$ 
     & $"H_l(\bOmega)=\frac{d^l}{{dt}^l}(H(\bOmega))-H\left(\frac{d^l}{{dt}^l}\bOmega\right)"$
    & \eqref{G1H1} ($l=1$) \eqref{GlHl} (general case) \\ \hline
	$\mathcal{G}_l(P)$ 
    &  the operator obtained by taking $l$\textsuperscript{th} order time derivative of the coefficients in operator $\mathcal{G}$, i.e. \mbox{$\mathcal{G}_l(P)=\partial_t^l(\nabla\bY\nabla\bY^T)\nabla P$ }
    & Section \ref{Section:Uniqueness} (l=1), Appendix \ref{Section:time_derivatives_induction} (general case)\\ \hline
    $\mathcal{F}_l(\bU)$
	&  $\mathcal{F}_l(\bU) = \mathcal{L}_l(\bU)-\mathcal{M}_l(\bU)-\mathcal{N}_l(\bU)-\mathcal{G}_l(P)$,
 $\mathcal{L}_l, \mathcal{M}_l, \mathcal{N}_l$ are operators obtained by taking $l$\textsuperscript{th} order time derivative of the coefficients in operators $\mathcal{L}, \mathcal{M}, \mathcal{N}$
	& Section \ref{Section:Uniqueness} (l=1), Appendix \ref{Section:time_derivatives_induction} (general case) 
	\\ \hline
	$\mathbb{G}$ & $\mathbb{G}=\nabla\bX^T\nabla\bX$ & 
	Section \ref{Section:Uniqueness} \\ \hline
\end{longtable}

{\bf Acknowledgments.} The authors express their sincere gratitude to the anonymous referee for providing a detailed and insightful review of the manuscript. The suggestions and comments provided by the referee helped us to improve the clarity and quality of the paper.

{\bf Conflict of interest:}

\v{S}\'arka Ne\v{c}asov\'a as 
 the corresponding author declares on behalf of all authors, that there is no conflict of interest.

{\bf Declarations:}
'Not applicable' for the whole manuscript.

{\bf Data availability statement:}
There are no associated data corresponding to the manuscript.

\end{document}